\newtheorem*{theomain}{Theorem}
\newtheorem{thm}{Theorem}[section]
\newtheorem{prop}[thm]{Proposition}
\newtheorem{df}[thm]{Definition}
\newtheorem{lem}[thm]{Lemma}
\newtheorem{cor}[thm]{Corollary}
\newtheorem{claim}[thm]{Claim}
\newtheorem{ques}[thm]{Question}
\newtheorem{conj}[thm]{Conjecture}
\newtheorem{rem}[thm]{Remark}
\def\N{\mathbb{N}}
\def\Z{\mathbb{Z}}
\def\N{\mathbb{N}}
\def\R{\mathbb{R}}
\def\C{\mathbb{C}}
\def\FF{\mathcal{F}}
\def\LL{\mathcal{L}}
\def\MM{\mathcal{M}}
\def\FFi{\overline{\mathcal{F}}}
\def\SS{\mathcal{S}}
\def\mdim{\text{\rm mdim}}
\def\Id{\text{\rm Id}}
\def\diam{\text{\rm diam}}
\def\BB{\mathcal{B}}
\def\TT{\mathcal{T}}
\def\Vect{\text{\rm Vect}}
\def\Ext{\text{\rm Ext}}
\def\M{\text{\rm Mult}}
\numberwithin{equation}{section}
\newcommand{\norm}[1]{\left\lVert#1\right\rVert}
\title{Multiplicity of topological systems}
	\author{David Burguet}
	\address
	{Sorbonne Universite, LPSM, 75005 Paris, France}
	\email{david.burguet@upmc.fr}
	\author{Ruxi Shi}
\address
	{Sorbonne Universite, LPSM, 75005 Paris, France}
\email{ruxi.shi@upmc.fr}
\subjclass[2020]{}
\keywords{}
\begin{document}

	\maketitle
	
	\begin{abstract}
We define the topological multiplicity of an invertible  topological system $(X,T)$ as the minimal number $k$ of real continuous functions  $f_1,\cdots, f_k$ such that the functions $f_i\circ T^n$, $n\in\mathbb Z$, $1\leq i\leq k,$ span a dense linear vector space in the space of real continuous functions  on $X$ endowed with the supremum norm. We study some properties of topological systems with  finite multiplicity. After giving some examples, we investigate the multiplicity of subshifts with linear growth complexity.
 
	\end{abstract}

\tableofcontents

\section{Introduction}

The multiplicity of an invertible bounded operator $U:E\circlearrowleft$ on a normed vector  space $E$  is the minimal cardinality of subsets $ F\subset E$, whose cyclic space (i.e. the vector space spanned by $U^kx$, $k\in \mathbb Z$, $x\in F$) is dense in  $E$. 

For  an ergodic measure preserving system $(X,f,\mathcal B,\mu)$,  the multiplicity $\M(\mu)$ of the Koopman operator, which is the operator of composition by $f$ on the Hilbert space $L^2(\mu)$ is a dynamical invariant, which has been investigated in many works (see e.g. \cite{dan} and the references therein). 

Cyclicity, which corresponds to simple multiplicity (i.e. there is an element whose cyclic space is dense in the whole vector space), has been also  established for operators of composition on the  Hardy space $H^2(D)$ \cite{shapiro}. In this context a  pioneering work of Birkhoff \cite{Birkh} states  that there is an entire function $\phi$ in the complex plane such that the set $\{\phi(\cdot+n), \ n\in \mathbb N\}$  is dense itself in the set of entire functions endowed with the   uniform topology on compact subsets, i.e. the operator of translation by $1$ is \textit{hypercyclic}.

 Quite surprisingly the corresponding topological invariant has not been studied in full generality.  More precisely we consider here  topological dynamical systems $(X,T)$, where $X$ is a compact metrizable space and $T:X\circlearrowleft$ is a homeomorphism and we study  the operator of composition by $T$ on the Banach space $C(X)$ of real continuous functions endowed with the uniform topology. We call topological  multiplicity  of $(X,T)$ the associated multiplicity  and we denote it by $\M(T)$. We remark that our definitions and results can be extended to the noninvertible continuous map $T:X\circlearrowleft$. But for sake of simplicity, we focus on  homeomorphisms $T:X\circlearrowleft$.
	
In this paper, we mostly focus on  topological systems with finite multiplicity. We first show the following properties for such systems.
\begin{theomain}
    Let $(X,T)$ be a topological system with finite multiplicity. Then the following properties are satisfied.
    \begin{enumerate}
        \item $(X,T)$ has zero topological entropy.
        \item $(X,T)$ has finitely many ergodic measures. 
    \end{enumerate}
\end{theomain}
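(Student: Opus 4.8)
The plan is to reduce both statements to a single observation relating the topological multiplicity to the measure-theoretic multiplicity of the Koopman operator. For any $T$-invariant Borel probability measure $\mu$, write $\M(\mu)$ for the multiplicity of the unitary Koopman operator $U_T g = g\circ T$ on $L^2(\mu)$. The first step is the inequality $\M(\mu)\le \M(T)$ for every such $\mu$: if $f_1,\dots,f_k$ realize the topological multiplicity, then their cyclic space $\operatorname{span}\{f_i\circ T^n:\ n\in\Z,\ 1\le i\le k\}$ is dense in $C(X)$ for the sup norm, hence dense in $L^2(\mu)$ (uniform convergence forces $L^2$ convergence since $\mu$ is a probability measure, and $C(X)$ is $L^2$-dense); thus the same $k$ functions generate a dense cyclic space in $L^2(\mu)$, so $\M(\mu)\le k$.

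For part (1), I would argue by contraposition through the variational principle. If $(X,T)$ has positive topological entropy, there is an ergodic $\mu$ with $h_\mu(T)>0$. By the Rokhlin--Sinai theorem, the orthocomplement of the Pinsker factor in $L^2(\mu)$ carries a countable Lebesgue spectrum, so $U_T$ has a countable Lebesgue spectral component and its maximal spectral multiplicity is infinite. Since for a unitary operator the minimal number of cyclic generators equals the essential supremum of the spectral multiplicity function (Hahn--Hellinger theory), this forces $\M(\mu)=\infty$, contradicting $\M(\mu)\le \M(T)<\infty$. Hence $(X,T)$ has zero topological entropy.

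For part (2), I would produce a lower bound for $\M(\mu)$ from the eigenvalue $1$. The general fact is: for a unitary $U$, if the eigenspace $\ker(U-\lambda)$ has dimension $m$, then every cyclic family $F$ has at least $m$ members, because the orthogonal projection $P$ onto $\ker(U-\lambda)$ satisfies $PU^n f=\lambda^n Pf$, so it maps the cyclic space of $F$ into $\operatorname{span}\{Pf:f\in F\}$, a space of dimension at most $|F|$; density of the cyclic space then forces $|F|\ge m$. Now suppose $\mu_1,\dots,\mu_r$ are distinct ergodic measures; they are pairwise mutually singular, so for $\mu=\frac1r\sum_{j=1}^r\mu_j$ one has $L^2(\mu)\cong\bigoplus_{j} L^2(\mu_j)$, and a function is $T$-invariant in $L^2(\mu)$ iff it is $\mu_j$-almost everywhere constant for each $j$. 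Thus $\ker(U_T-\Id)$ is exactly $r$-dimensional, whence $r\le \M(\mu)\le \M(T)$. Consequently $(X,T)$ has at most $\M(T)$ ergodic measures, in particular finitely many.

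The routine parts are the density transfer and the two linear-algebraic multiplicity bounds; the substantive input, and the step I expect to require the most care, is the invocation of the spectral structure of positive-entropy systems (the countable Lebesgue component via Rokhlin--Sinai) together with the identification of the minimal number of cyclic generators with the maximal spectral multiplicity, which is precisely what converts finite topological multiplicity into the entropy and finiteness conclusions.
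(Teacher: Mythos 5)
Your proposal is correct. For item (1) you follow essentially the same route as the paper: the reduction $\M(\mu)\le\M(T)$ is Lemma \ref{compa} (your version on all of $L^2(\mu)$ rather than $L^2_0(\mu)$ changes nothing), and the chain ``positive entropy $\Rightarrow$ countable Lebesgue component (Rokhlin--Sinai) $\Rightarrow$ infinite spectral multiplicity $\Rightarrow$ $\M(\mu)=\infty$'', combined with the variational principle, is exactly the first proof of Proposition \ref{ent}; the paper additionally records a second, purely topological proof via the affine embedding of $(\mathcal M(X),T_*)$ into a cubical shift (Theorem \ref{embmul}) together with the fact that positive entropy forces infinite mean dimension of the induced system (Theorem \ref{ru}), which avoids any spectral input. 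For item (2), however, your argument is genuinely different from the paper's. The paper works in the Banach space $C(X)$: it proves the exact equality $\M(\tilde U_T)=\sharp\mathcal M_e(X,T)$ for the operator induced on the quotient $C(X)/\overline{B_T(X)}$ by the closure of coboundaries (Lemma \ref{erg}), via a linear-dependence argument with signed invariant measures, and then concludes $\sharp\mathcal M_e(X,T)\le\M(T)$ from the general quotient inequality (\ref{encor}) (Corollary \ref{err}). You instead pass to $L^2(\mu)$ for $\mu=\frac{1}{r}\sum_{j}\mu_j$ and bound the number of cyclic generators below by $\dim\ker(U_T-\Id)=r$ using the eigenprojection trick $PU^nf=\lambda^nPf$; this is sound, since for a unitary operator eigenspaces are reducing, so $P$ commutes with $U_T$ and maps the closed cyclic space of $F$ into the finite-dimensional space $\mathrm{span}\{Pf: f\in F\}$. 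The one point to state carefully is the lower bound $\dim\ker(U_T-\Id)\ge r$: mutual singularity gives disjoint carriers, but to exhibit $r$ linearly independent invariant indicator functions you should take $T$-invariant carriers, e.g.\ the sets of $\mu_j$-generic points --- a routine repair. Comparing what each approach buys: your Hilbert-space argument is more elementary and self-contained, but it yields only the inequality the theorem needs; the paper's quotient-space formulation gives the sharper identity $\M(\tilde U_T)=\sharp\mathcal M_e(X,T)$, which is then reused (via Lemma \ref{lem:-1}) to prove $\M(T)\le\M(\tilde{U_T})+\M(\underline{U}_T)-1$ in Proposition \ref{mu}, information your route does not provide.
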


These properties are the main contents of Section 2. The property (1) is proven in Proposition \ref{ent} in two ways: one  uses the  variational principal of topological entropy; the other is  purely topological. The property (2) is proven in Lemma \ref{erg} and Corollary \ref{err}. In fact we show more precisely that the number of ergodic measures is equal to the multiplicity of the operator induced on the quotient of $C(X)$ by the closure of coboundaries.

In Section 3, we relate the topological multiplicity with the dimension of cubical shifts, in which the action $T_*:\mathcal M(X)\circlearrowleft$ induced by $T$ on the set $\mathcal M(X)$ of Borel probability measures on $X$ may be affinely embedded. In Theorem \ref{embmul}, we show a necessary and sufficient condition for the existence of affinely embedding of $(\mathcal M(X), T_*)$ to the shift on $([0,1]^d)^\Z$. Furthermore, we compare our result to Lindenstrauss-Tsukamoto conjecture for dynamical embedding (Corollary \ref{cor:LT}).

In Section 4, we state a generalized Banach version of a lemma due to Baxter \cite{bax} which is  a classical criterion of  simplicity  for ergodic transformations. The generalized Baxter's Lemma (Lemma \ref{cyclic}) will play an important role on estimating the topological multiplicity in next sections.

For minimal Cantor systems  a topological  analogue of the rank of a measure preserving system  has been defined and  studied (see \cite{durand}). In Section 5, under this setting, we compare the topological multiplicity with the topological rank (Theorem \ref{thm rank}).

In Section 6, we study some examples and estimate their topological multiplicity: minimal rotations on compact groups, Sturmian and substitution subshifts, homeomorphisms of the interval, etc.

In Section 7, we estimate the topological multiplicity of  subshifts with linear growth complexity, i.e. subshifts $X$ such that the cardinality $p_X(n)$ of $n$-words in $X$ satisfies $\liminf_{n\to \infty}\frac{p_X(n)}{n}<+\infty$. Such subshifts  aroused a great deal of interest, specially recently \cite{boshernitzan1992condition,kracy,creutz2023low,donoso2021interplay}.  In \cite{boshernitzan1992condition} it is proved that an aperiodic subshift $X$ has  at most $k$ ergodic measures if  $\liminf_n\frac{p_X(n)}{n}\leq k\in \mathbb N$. Our main related result  states as follows (Theorem \ref{mubo} and Theorem \ref{last}):

\begin{theomain}
Let $X$ be an aperiodic subshift with $\liminf_{n\to \infty}\frac{p_X(n)}{n}\leq k\in \mathbb N$. Then 
$$\M(T)\leq 2k \text{ and }\sum_{\mu \ ergodic} \M(\mu)\leq 2k.$$
\end{theomain}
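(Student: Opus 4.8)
The plan is to realise $(X,T)$, along a suitable sequence of word-lengths, by a generating sequence of clopen Kakutani--Rokhlin partitions having at most $2k$ towers, and then to feed these partitions into the generalized Baxter Lemma (Lemma \ref{cyclic}). Once the partitions are in hand, both inequalities follow from Baxter-type arguments; essentially all of the difficulty sits in the combinatorial construction of partitions with so few towers.

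For the combinatorial input I would analyse the Rauzy graphs $G_n$ of $X$, with vertex set the words of length $n$ and edge set the words of length $n+1$, so that $G_n$ has $p_X(n)$ vertices and $p_X(n+1)$ edges. Writing $d(n):=p_X(n+1)-p_X(n)\ge 1$ (aperiodicity forces $p_X$ to be strictly increasing), the total right-branching $\sum_v(\mathrm{outdeg}(v)-1)$ and the total left-branching $\sum_v(\mathrm{indeg}(v)-1)$ both equal $d(n)$, so $G_n$ carries at most $d(n)$ right-special and at most $d(n)$ left-special words, i.e. at most $2d(n)$ special words in all. Since $p_X(n)=p_X(0)+\sum_{m<n}d(m)$, the hypothesis $\liminf_n p_X(n)/n\le k$ gives $\frac1n\sum_{m<n}d(m)\le k+o(1)$ along a subsequence; were $d(m)>k$ for all large $m$ we would get $\liminf_n p_X(n)/n\ge k+1$, so in fact there are arbitrarily large lengths $n$ with $d(n)\le k$, hence with at most $2k$ special words. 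The dynamics branches only at these special words, so cutting bi-infinite orbits at their occurrences produces a clopen Kakutani--Rokhlin partition $\mathcal P_n$ with at most $2k$ towers; letting $n\to\infty$ along such lengths and observing that the levels of $\mathcal P_n$ refine the length-$n$ cylinder partition, the sequence $(\mathcal P_n)_n$ separates points and generates the topology of $X$. On the Sturmian subshift, where $d\equiv 1$ and $k=1$, this recovers exactly the two towers coming from the two return words, matching $2k=2$.

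Granting $(\mathcal P_n)_n$, the topological bound follows from Lemma \ref{cyclic}: the indicator of the base of each tower is continuous (the partition being clopen), its $T$-translates recover the indicators of all levels of that tower, and as $n\to\infty$ the resulting functions exhaust a dense subalgebra of $C(X)$. As there are at most $2k$ towers at every stage, this exhibits at most $2k$ functions whose cyclic space is dense, whence $\M(T)\le 2k$. For the measured inequality I would run the same partitions relative to each ergodic measure and invoke the measured form of Baxter's Lemma, which bounds $\M(\mu)$ by the number of towers of $\mathcal P_n$ that $\mu$ charges in the limit; summing over the at most $k$ ergodic measures supplied by \cite{boshernitzan1992condition}, and using that distinct ergodic measures charge disjoint families of towers, yields $\sum_{\mu\ \mathrm{ergodic}}\M(\mu)\le 2k$.

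The main obstacle is precisely this combinatorial heart: converting the complexity bound into clopen Kakutani--Rokhlin partitions with at most $2k$ towers --- in particular pinning the constant to exactly $2k$ rather than to some larger multiple of $k$, and doing so uniformly enough that the partitions remain generating --- which demands a careful, Boshernitzan-style bookkeeping of the special words of the Rauzy graphs and of how the towers split among the ergodic components.
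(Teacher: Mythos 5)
Your first inequality follows the paper's route in spirit (the paper also cuts at branch words and feeds the resulting bases to Lemma \ref{cyclic}), but your tower bookkeeping is wrong: the number of towers is not the number of special words. If you cut an orbit each time a right-special word $w$ of length $n$ occurs, the tower bases cannot be the cylinders $[w]$ themselves, because the continuation after $w$ is ambiguous; the bases must be the one-letter extensions $[wa]$, one per out-edge of $w$ in the Rauzy graph. Counting edges out of right-special vertices gives exactly the paper's set $Q'_{n+1}$, of cardinality $\sharp Q_n+\bigl(p_X(n+1)-p_X(n)\bigr)\le 2d(n)\le 2k$. Left-special words play no role at all, and cutting at their occurrences as well, as you propose, only \emph{adds} cuts and can push the number of towers up to roughly $3k$ --- even in your own Sturmian test case, cutting at both the left- and right-special factor can produce more than two towers, whereas cutting only at the right-special factor yields the two bases $[w0],[w1]$. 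This slip is fixable (drop the left-special words and index towers by $Q'_{n+1}$), but note also that the paper does not work merely along a subsequence with $d(n)\le k$: Lemma \ref{lem:R} selects scales $n$ where simultaneously $p_X(n+1)<(k+1)(n+1)$, which via Lemma \ref{bound} gives the uniform height bound $(k+2)(n+1)$ for the decomposition of Lemma \ref{dense2}. Your subsequence does not provide this, and the height bound is indispensable below.

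The genuine gap is in the measured inequality. Your key assertion, that ``distinct ergodic measures charge disjoint families of towers,'' is false at every finite stage and is not even the correct asymptotic statement: a single tower can carry positive mass for several ergodic measures, and nothing in your sketch rules this out. What the paper actually proves is a quantitative vanishing estimate. Writing each base word $q$ as $v^{\otimes K}\hat v$ with $v$ its shortest ``period,'' Lemmas \ref{int} and \ref{keylem} show that if the empirical measures $\nu_{q_{n,j}}$ converge to $\mu_j$, then $|v_{n,j}|\,\mu_i([q_{n,j}])\to 0$ for every ergodic $\mu_i\neq\mu_j$; combined with the two height bounds --- $(k+2)(n+1)$ in general (Lemma \ref{bound}) and $\le |v|$ when $|v|<n+1$ (Lemma \ref{dense3}) --- this forces the \emph{total $\mu_i$-mass} of all towers whose bases track some $\mu_j$ with $j\notin I_i$ to tend to zero. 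Only this estimate lets one approximate each cylinder indicator in $L^2(\mu_i)$ using translates of the bases indexed by $J_{n,i}$ alone, apply Lemma \ref{cyclic} in $L^2_0(\mu_i)$ to get $\M(\mu_i)\le\liminf_n \sharp J_{n,i}$, and then sum, since the $J_{n,i}$ partition an index set of size $\sharp Q'_{n+1}\le 2k$. There is no off-the-shelf ``measured Baxter lemma bounding $\M(\mu)$ by the number of towers $\mu$ charges in the limit'' that substitutes for this; without the $|v_n|\mu([w_n])\to 0$ mechanism (and the height control that makes it bite), your summation argument does not go through.
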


Except the results on multiplicity that we investigate, we propose several questions in the current paper.

\section{Topological multiplicity, definition and first properties}

\subsection{Multiplicity of a linear operator}
Let $(E,\|\cdot \|)$ be a normed vector  space over $\mathbb R$. 
 We consider a  linear invertible bounded operator $U:E\circlearrowleft$.  A subset $F$ of $E$ is called a \textit{generating family} of $U$ when    the vector space  spanned by $U^kx$, $k\in \mathbb Z$, $x\in F$, is dense in $E$. In the following we denote by $\text{span}(G)$ (resp. $\overline{\text{span}}(G)$)   the vector space spanned by a subset $G$ of $E$ (resp. its closure) and we  then let  $V^U_F:=\overline{\text{span}}\{ U^k x:  k \in \Z, x\in F \}$. Sometimes, we write $V_F$ instead of $V^U_F$ whenever the operator is fixed.  The \textit{multiplicity} $\M(U)\in \mathbb N\cup \{\infty\}$ of $U$ is then the smallest cardinality of generating families of $U$. By convention we let $\M(U)=0$ when $E$ is reduced to $\{0\}$.  A linear operator with multiplicity one is called \textit{cyclic}.

We first study the equivariant map between two normed vector spaces with linear invertible bounded operators.

\begin{lem}\label{ooo}
Let $U_i:E_i\circlearrowleft$, $i=1,2$ be two linear invertible bounded operators. Assume that there is a linear bounded operator $W:E_1\rightarrow E_2$  satisfying $W\circ U_1=U_2\circ W$ then 
$$\M(U_2|_{\overline{\mathrm{Im}(W)}})\leq \M(U_1).$$
\end{lem}
\begin{proof}
One checks easily that if $F$ is a generating family for $U_1$ then $W(F)$ is a generating family for the restriction of $U_2$ to the closure of the image of $W$. Therefore $\M(U_2|_{\overline{\mathrm{Im}(W)}})\leq \M(U_1)$.
\end{proof}

A direct consequence of Lemma \ref{ooo} is that the multiplicity is a spectral invariant : if $U_i$ are linear invertible operators on $E_i$  $,i=1,2$,    satisfying $W\circ U_1=U_2\circ W$ for some invertible bounded linear operator $W:E_1\rightarrow E_2$, then $U_1$ and $U_2$ have the same multiplicities.  

When $E'$ is a closed subspace of $E$ we endow the quotient $E/E'$ space with the norm $\|\overline{u}\|'=\inf\{\|u+v\|, \ v\in E' \}$. If $E'$ is invariant by $U$ we let $U^{E/E'}$  be the action induced by $U$ on the quotient normed space $E/E'$. In this context, by applying   Lemma \ref{ooo}  with $W:E\rightarrow E/E'$ being the natural projection, we get \begin{eqnarray}\label{encor}
\M(U^{E/E'})\leq \M(U).\end{eqnarray}

\subsection{Operator of composition : Topological and ergodic multiplicities.}
Ergodic theory focuses on the study of invertible measure preserving systems $(X,f,\mathcal B, \mu)$. In particular the spectral properties of the unitary operator $U_f:L^2(\mu)\circlearrowleft$,  $\phi\mapsto \phi\circ f$, are investigated.  We let $\|f\|_{2}:=(\int_X|f(x)|^2 d\mu)^{1/2}$ be the $L^2$-norm of $f\in L^2(\mu)$.

\begin{df}
The \textbf{ergodic multiplicity} $\M(\mu)$ of an ergodic system $(X,f,\mathcal B, \mu)$ is the multiplicity of the restriction of $U_f$ to the Hilbert space $L_0^2(\mu):=\{f\in L^2(\mu), \ \int f\, d\mu=0\}$, that is to say, $\M(\mu)=\M(U_f)$.
\end{df}
 This  quantity has been intensely studied in ergodic theory (see Danilenko's survey \cite{dan}).

Next we consider here  an invertible topological dynamical system $(X,T)$, i.e. $T:X\circlearrowleft$ is a homeomorphism of a compact metric space $X$. We denote by $C(X)$ the Banach space of  real continuous functions endowed with the topology of uniform convergence. We let $\|f\|_{\infty}:=\sup_{x\in X}|f(x)|$ be the supremum norm of $f\in C(X)$.
\begin{df}
The \textbf{topological multiplicity} $\M(T)$ of $(X,T)$ is the multiplicity of 
the operator of composition $U_T:C(X)\circlearrowleft$, $\phi\mapsto \phi\circ T$.
\end{df}
 Quite surprisingly this last notion seems to be new (note however that cyclicity of $U_T$ has already been investigated in some cases). Let us first observe that the topological multiplicity  bounds from above  the ergodic multiplicity of ergodic $T$-invariant measures.


 
\begin{lem}\label{compa}
Let $(X,T)$ be an invertible topological dynamical system.  For any ergodic $T$-invariant measure $\mu$, we have
$$\M(\mu)\leq \M(T).$$
\end{lem}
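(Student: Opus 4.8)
The plan is to reduce the statement to a single application of Lemma \ref{ooo}. The idea is to build one bounded linear operator that is equivariant for the two composition operators and whose image is dense in $L^2_0(\mu)$, taking $E_1=C(X)$ with $U_1=U_T$ and $E_2=L^2_0(\mu)$ with $U_2$ the Koopman operator restricted to the zero-mean subspace. The natural candidate is the tautological inclusion $C(X)\hookrightarrow L^2(\mu)$ followed by the orthogonal projection onto $L^2_0(\mu)$, namely the map $W:C(X)\to L^2_0(\mu)$ given by $W\phi=\phi-\int_X\phi\,d\mu$. If this $W$ is bounded, equivariant, and has dense image, then Lemma \ref{ooo} yields $\M(U_T|_{L^2_0(\mu)})\leq \M(U_T|_{C(X)})$, which is precisely $\M(\mu)\leq \M(T)$.

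First I would check the two routine properties. For boundedness, note that $W\phi$ is continuous hence lies in $L^2(\mu)$, has zero $\mu$-mean by construction, and $\|W\phi\|_2\leq \|\phi\|_2+|\int_X\phi\,d\mu|\leq 2\|\phi\|_\infty$ since $\mu$ is a probability measure, so $\|W\|\leq 2$. For equivariance, the $T$-invariance of $\mu$ gives $\int_X\phi\circ T\,d\mu=\int_X\phi\,d\mu$, whence $W(\phi\circ T)=\phi\circ T-\int_X\phi\,d\mu=(W\phi)\circ T$; here one uses that composition by $T$ preserves $L^2_0(\mu)$ exactly because constants are $T$-invariant.

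The one genuinely analytic point, which I would treat as the main step, is that $\mathrm{Im}(W)$ is dense in $L^2_0(\mu)$. Since $\mu$ is a Borel probability measure on a compact metric space it is regular, so $C(X)$ is dense in $L^2(\mu)$; writing $P:L^2(\mu)\to L^2_0(\mu)$, $f\mapsto f-\int_X f\,d\mu$, for the continuous surjective projection, we have $W=P|_{C(X)}$, and continuity of $P$ gives $\overline{\mathrm{Im}(W)}=\overline{P(C(X))}\supseteq P\big(\overline{C(X)}\big)=P(L^2(\mu))=L^2_0(\mu)$, so $\overline{\mathrm{Im}(W)}=L^2_0(\mu)$. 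With density established, Lemma \ref{ooo} applies directly and finishes the proof. The only subtlety worth flagging is definitional rather than technical: $\M(\mu)$ is defined through $L^2_0(\mu)$, not all of $L^2(\mu)$, which is exactly why the projection $P$ (removing the mean) is folded into $W$ instead of using the bare inclusion into $L^2(\mu)$.
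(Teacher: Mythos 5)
Your proof is correct and follows essentially the same route as the paper: the paper also takes a generating family in $C(X)$, uses density of the sup-norm closure in $(L^2(\mu),\|\cdot\|_2)$, and pushes it through the very map $p:f\mapsto f-\int f\,d\mu$, which is continuous and commutes with $U_T$. Packaging this as a single application of Lemma \ref{ooo} (with the density of $\mathrm{Im}(W)$ in $L^2_0(\mu)$ spelled out via regularity of $\mu$) is just a slightly more formal presentation of the identical argument.
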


\begin{proof}
Let $F$ be a generating family with minimal cardinality of $U_T:C(X)\circlearrowleft$. Then the vector space spanned by $F$ is dense in $(C(X),\|\cdot\|_\infty)$, therefore in $(L^2(\mu),\|\cdot\|_2)$. As $p:L^2(\mu)\rightarrow L^2_0(\mu)$, $f\mapsto f-\int f\ d\mu$ is continuous and $p\circ U_T=U_T\circ p$, the vector space spanned by $p(F)$ is dense in $L^2_0(\mu)$.

\end{proof}
Let $\mathcal M(X)$ be the set of Borel probability measures endowed with the weak-$*$ topology. It is standard that  $\mathcal M(X)$ is  a compact metrizable space.
The compact  subset $\mathcal M(X,T)\subset \mathcal M(X)$ of Borel $T$-invariant probability measures of $(X,T)$ is  a simplex, whose extreme set  is given by the subset $\mathcal M_e(X,T)$ of ergodic measures. A topological system with a  unique (ergodic) invariant measure is said to be {\it uniquely ergodic}.  
Jewett-Krieger theorem states that every ergodic system has a uniquely ergodic model. Several proofs have been given of this theorem, e.g. see \cite[Section 29]{denker2006ergodic}. One may wonder if the multiplicity may be preserved:

\begin{ques}
Given an ergodic system with measure $\mu$, is there a uniquely ergodic model $(X,T)$ of it such that $\M(T)=\M(\mu)$?
\end{ques}

\subsection{The  number of ergodic  measures as a multiplicity.}
Let $(X,T)$ be an invertible topological dynamical system. 
A function $\psi\in C(X)$ is a called a {\it continuous $T$-coboundary}, if $\psi$ is equal to $\phi\circ T-\phi$ for some $\phi\in C(X)$. In other terms the  set $B_T(X)$ of continuous $T$-coboundaries is the image of $U_T-\Id$, in particular it is a vector space. Observe that $U_T(B_T(X))= B_T(X)$. To simplify the notations we write  $\tilde{U}_T$ for the action induced by $U_T$ on the quotient Banach space $C(X)/ \overline{B_T(X)}$ and $\underline{U}_T$ for the restriction of $U_T$ to the closure $\overline{B_T(X)}$ of continuous coboundaries.
By a standard application of Hahn-Banach theorem (see e.g. Proposition 2.13 in \cite{Katt}), a function $\psi$ belongs  to $\overline{B_T(X)}$ if and only if $\int \psi \, d\mu=0$ for any $\mu\in \mathcal M(X,T)$ (resp. $\mu\in \mathcal M_e(X,T)$). 
It is well-known that unique ergodicity is equivalent to the decomposition $C(X)=\mathbb R \mathbb  1\oplus \overline{B_T(X)}$ (see e.g. Lemma 1 in \cite{vol}), where $\mathbb  1$ denotes the constant function equal to $1$.  In particular in the case of unique ergodicity,  we have $C(X)/ \overline{B_T(X)}\simeq \mathbb R \mathbb 1$ and therefore $\M(\tilde U_T)=1$. 
It may be generalized as follows. 

\begin{lem}\label{erg}
Let $(X,T)$ be an invertible topological dynamical system. We have
$$\M(\tilde U_T)=\sharp \mathcal M_e(X,T).$$
\end{lem}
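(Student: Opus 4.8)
The plan is to reduce the computation of $\M(\tilde U_T)$ to a dimension count. The first and decisive observation is that $\tilde U_T$ is actually the \emph{identity} on $C(X)/\overline{B_T(X)}$: for any $\psi\in C(X)$ one has $U_T\psi-\psi=\psi\circ T-\psi\in B_T(X)\subseteq\overline{B_T(X)}$, so $\tilde U_T\overline{\psi}=\overline{\psi}$ for every class $\overline{\psi}$. Consequently, for the identity the cyclic space of a family $F$ is simply $\overline{\mathrm{span}}(F)$, and a generating family is nothing but a subset whose span is dense. Since the span of a finite set is finite-dimensional and hence closed, a finite generating family $F$ must in fact satisfy $\mathrm{span}(F)=C(X)/\overline{B_T(X)}$. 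Therefore $\M(\tilde U_T)=\dim\big(C(X)/\overline{B_T(X)}\big)$, with the convention that this equals $\infty$ when the quotient is infinite-dimensional (and $0$ when it is trivial, consistent with $\M=0$ on $\{0\}$).

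It then remains to identify $\dim\big(C(X)/\overline{B_T(X)}\big)$ with $\sharp\mathcal M_e(X,T)$. For this I would invoke the Hahn--Banach characterization recalled before the statement: $\psi\in\overline{B_T(X)}$ if and only if $\int\psi\,d\mu=0$ for every $\mu\in\mathcal M_e(X,T)$. This says precisely that the evaluation map $\Phi:C(X)\to\mathbb R^{\mathcal M_e(X,T)}$, $\psi\mapsto\big(\int\psi\,d\mu\big)_{\mu\in\mathcal M_e(X,T)}$, has kernel exactly $\overline{B_T(X)}$, so it descends to an injective linear map $\overline{\Phi}$ from the quotient into $\mathbb R^{\mathcal M_e(X,T)}$.

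Assume first that there are finitely many ergodic measures $\mu_1,\dots,\mu_n$. I would show that $\overline{\Phi}:C(X)/\overline{B_T(X)}\to\mathbb R^{n}$ is onto, which together with injectivity yields $\dim=n$. Surjectivity is equivalent to the linear independence in $C(X)^*$ of the functionals $\psi\mapsto\int\psi\,d\mu_i$; and if $\sum_i a_i\mu_i=0$ as a functional on $C(X)$, then $\sum_i a_i\mu_i=0$ as a signed measure by the Riesz representation theorem, whereupon testing against disjoint Borel sets $A_i$ with $\mu_i(A_i)=1$ and $\mu_j(A_i)=0$ for $j\neq i$ (available because distinct ergodic measures are mutually singular) forces every $a_i=0$. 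When there are infinitely many ergodic measures, the same independence argument applied to an arbitrary finite subfamily shows that $\overline{\Phi}$ surjects onto $\mathbb R^{m}$ for every $m$, so the quotient is infinite-dimensional and $\M(\tilde U_T)=\infty=\sharp\mathcal M_e(X,T)$.

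The identification $\tilde U_T=\Id$ and the elementary formula $\M(\Id_E)=\dim E$ are the structural backbone, and they make the result almost immediate once one has the Hahn--Banach description of $\overline{B_T(X)}$. I expect the only genuinely delicate point to be the surjectivity of $\overline{\Phi}$, i.e. the linear independence of the ergodic measures as functionals on $C(X)$; this is exactly where mutual singularity of distinct ergodic measures enters, and it is also the step that unifies the finite and infinite cases.
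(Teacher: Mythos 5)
Your proof is correct, and it takes a genuinely different---and arguably cleaner---route than the paper's. Your decisive observation, nowhere made explicit in the paper, is that $\tilde U_T$ is the \emph{identity} on $C(X)/\overline{B_T(X)}$ (since $\psi\circ T-\psi\in B_T(X)$ for every $\psi$), which reduces the multiplicity to a dimension count, $\M(\tilde U_T)=\dim\bigl(C(X)/\overline{B_T(X)}\bigr)$, and then to identifying that dimension via the evaluation map $\overline{\Phi}$ into $\mathbb R^{\mathcal M_e(X,T)}$: injectivity of $\overline{\Phi}$ is exactly the Hahn--Banach characterization of $\overline{B_T(X)}$, and surjectivity is the linear independence of distinct ergodic measures, which you justify explicitly via mutual singularity. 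The paper instead proves the two inequalities separately by hand: for $\M(\tilde U_T)\geq\sharp\mathcal M_e(X,T)$ it takes a putative generating family with $q<p$ elements, extracts a nontrivial invariant signed measure $\nu=\sum_i c_i\nu_i$ annihilating all $f_l\circ T^k$, and uses density of the cyclic space in the quotient together with the vanishing of coboundary integrals to force $\nu=0$, contradicting independence of the $\nu_i$ (the same fact you prove via mutual singularity, invoked there only as ``ergodicity of the $\nu_i$''); for the converse it shows a minimal generating family with $q>p$ elements would contain a nontrivial linear combination lying in $\overline{B_T(X)}$, contradicting minimality. Your approach buys two things: it treats the infinite case uniformly (the paper's converse direction tacitly restricts to $\M(\tilde U_T)<\infty$ ``without loss of generality''), and it delivers Remark \ref{r} for free, since once $\overline{\Phi}$ is an isomorphism onto $\mathbb R^p$ a $p$-tuple of classes generates if and only if the matrix $\bigl(\int f_j\,d\nu_i\bigr)$ is invertible. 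What the paper's more pedestrian argument buys is that it stays entirely within the formalism of generating families and cyclic spaces, so the same approximation pattern (invariance plus vanishing on coboundaries, then a weak-$*$ limit) recurs essentially verbatim in later proofs such as Theorem \ref{embmul}, whereas your dimension count exploits the special feature that the induced operator on this particular quotient is trivial.
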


\begin{proof}
We first show that $\M(\tilde U_T)\geq \sharp \mathcal M_e(X,T)$. 
Assume that :
\begin{itemize}
\item $\nu_1,\cdots, \nu_p$ are distinct ergodic measures,
\item $\overline{F}=\{\overline{f_1}, \cdots, \overline{f_q}\}\in C(X)/ \overline{B_T(X)} $ is a generating family of $\tilde{U}_T$. 
\end{itemize}
For $1\le l\le q$, let $f_l\in C(X)$ be a function (a priori not unique)  such that $\overline{f_l} = f_l \mod \overline{B_T(X)}$.
If $q<p$ then the $p$ vectors  $$ X_i=\left(\int f_l\, d\nu_i\right)_{l=1, 2, \cdots, q}, i=1, 2,\cdots, p$$ are linearly dependent in $\R^q$, i.e. there is $(c_i)_{1\le i\le p}\in \mathbb R^{p} \setminus (0,0, \dots, 0)$ such that \begin{align}\label{fini}\sum_{1\le i\le p}c_iX_i	&=0.\end{align} Let $\nu $ be the signed measure $\nu=\sum_{1\le i\le p}c_i\nu_i$.  Then Equality (\ref{fini}) may be rewritten as follows: 
$$
    \forall 1\le l\le q, \ \int f_l \, d\nu=0.
$$
The measures $\nu_i$ being invariant for ${1\le i\le p}$, so is $\nu$. Therefore we get   
\begin{equation}\label{eq:tt}
\forall 1\le l\le q, \forall k\in \Z,  \ \int f_l\circ T^k \, d\nu=0.
\end{equation}
But $V^{\tilde{U}_T}_{\overline{F}}=C(X)/ \overline{B_T(X)}$,  so that  for any $\epsilon>0$ and for any $g\in C(X)$, we may find  $h \in \mathrm{span}(f_l\circ T^k, \ 1\le l\le q,k\in \Z)$ and $u\in B_T(X)$  with $\|g-(h+u)\|_\infty<\epsilon$.  By \ref{eq:tt} we have $\int h\, d\nu=0$. As $u$ is a coboundary, we have also $\int u\, d\nu=0$. Therefore 
\begin{align*}\left|\int g\, d\nu\right|&\leq \left|\int (h+u)\, d\nu\right|+\|g-(h+u)\|_\infty<\epsilon.
\end{align*} Since $\epsilon>0$ and $g\in C(X)$ are chosen arbitrarily, we obtain $\int g\, d\nu=0$,  for any $g\in C(X)$, therefore $\nu=0$. This contradicts the ergodicity of the measures $\nu_i$ for $1\le i\le p$. Consequently we have $q\geq p$ and therefore $\M(\tilde U_T)\geq \sharp \mathcal M_e(X,T)$.

Let us show now the converse inequality. Without loss of generality we may assume that $p=\sharp \mathcal M_e(X,T)< \M(\tilde{U}_T)=q<\infty$. We let again:
 \begin{itemize}
\item $\mathcal M_e(X,T)=\{\nu_1,\cdots, \nu_p\}$,
\item $\overline{F}={\overline{f_1}, \cdots, \overline{f_q}}\in C(X)/ \overline{B_T(X)} $ a generating family of $\tilde{U}_T$ with minimal cardinality. 
\end{itemize}
Then the $q$ vectors $$Y_l=\left(\int f_l\,d\nu_i\right)_{i=1,\cdots, p}, \ l=1,\cdots, q,$$ are linearly dependent in $\R^p$, i.e. there is $(c_l)_{1\le l\le q}\in \mathbb R^{q}\setminus (0,0 \dots, 0)$ such that $$\sum_{1\le l\le q}c_lY_l=0.$$ Let  $g$ be the function $g=\sum_{1\le l\le q}c_lf_l$. Then  we have  $$\int g \,d\nu_i=0, \ \forall 1\le i\le p,$$  
A previously  mentioned, it implies that $g$ lies in $\overline{B_T(X)}$.
This contradicts the minimality of the generating family $\overline{F}$. 
\end{proof}

\begin{rem}\label{r}
It follows from the proof of Lemma \ref{erg} that if $ \mathcal M_e(X,T)=\{\nu_1,\cdots, \nu_p\}$, then   ${\overline{f_1}, \cdots, \overline{f_p}}$  is a  generating family  of $\tilde{U}_T$ if and only if the matrix $A=(\int f_j\,d\nu_i)_{1\le i,j\le p}\in M_p(\mathbb R)$ is invertible. 
\end{rem}

By inequality (\ref{encor}) and Lemma \ref{erg} we get:
\begin{cor}\label{err}
$$\sharp \mathcal M_e(X,T)\leq \M( T).$$
\end{cor}

\subsection{Relating $\M(T)$ and $\M(\underline{U}_T)$}
It follows from definition of $\overline{B_T(X)}$ that the map $$W:C(X)\rightarrow \overline{B_T(X)}, \ f\mapsto f\circ T-f$$ has dense image  and commutes with $U_T$.
By applying  Lemma \ref{ooo} with $U_1=U_2=U$ and $E_1=C(X)$, $E_2=\overline{B_T(X)}$, we obtain  $\M(\underline{U}_T)\leq \M(T)$. 

We show then in this subsection the following inequality.
\begin{prop}\label{mu}
$$\M(T)\leq \M(\tilde{U_T})+\M(\underline{U}_T)-1.$$
\end{prop}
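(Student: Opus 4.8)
The plan is to exhibit an explicit generating family of $U_T$ of cardinality $\M(\tilde U_T)+\M(\underline U_T)-1$. We may assume $p:=\M(\tilde U_T)<\infty$ and $r:=\M(\underline U_T)<\infty$, since otherwise the right-hand side is infinite; we may also assume $r\geq 1$, the alternative $\overline{B_T(X)}=\{0\}$ meaning $T=\Id$. Recall from Lemma \ref{erg} and the discussion preceding it that $\overline{f\circ T}=\overline{f}$ in the quotient (invariant measures are $T_*$-invariant), so $\tilde U_T=\Id$ and, there being finitely many ergodic measures, $C(X)/\overline{B_T(X)}$ is $p$-dimensional via $\overline{f}\mapsto(\int f\,d\nu_i)_i$. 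Fixing a minimal generating family $g_1,\dots,g_r$ of $\underline U_T$ in $\overline{B_T(X)}$ and, since $\overline{\mathbb{1}}\neq 0$, extending it to a basis $\overline{\mathbb{1}},\overline{f_2},\dots,\overline{f_p}$ of the quotient, I propose the family
\[
\mathcal F=\{\,g_1+\mathbb{1},\ g_2,\dots,g_r,\ f_2,\dots,f_p\,\},
\]
of cardinality $1+(r-1)+(p-1)=p+r-1$.

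To check that $\mathcal F$ generates, write $\pi:C(X)\to C(X)/\overline{B_T(X)}$ for the quotient map and $V=V_{\mathcal F}$. The quotient level is immediate: since $\tilde U_T=\Id$ and $\pi(g_j)=0$, the image $\pi(V)$ contains the span of $\overline{\mathbb{1}},\overline{f_2},\dots,\overline{f_p}$, hence is the whole quotient. As $V$ is closed and contains $\ker\pi=\overline{B_T(X)}$, the image $\pi(V)$ is closed, so density forces $V=\pi^{-1}(\pi(V))=C(X)$ once I know $\overline{B_T(X)}\subseteq V$. Using $U_T\mathbb{1}=\mathbb{1}$ we have $U_T^k(g_1+\mathbb{1})=U_T^kg_1+\mathbb{1}\in V$, so the differences $U_T^kg_1-U_T^{k'}g_1\in V$; together with $U_T^kg_j\in V$ for $j\geq 2$ this gives $S\subseteq V$, where
\[
S:=\overline{\mathrm{span}}\{\,U_T^kg_1-U_T^{k'}g_1,\ U_T^kg_j\ :\ j\geq 2,\ k,k'\in\Z\,\}.
\]
Everything thus reduces to proving the identity $S=\overline{B_T(X)}$.

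The main obstacle is precisely this identity: merging $\mathbb{1}$ into $g_1$ returns only the orbit \emph{differences} of $g_1$, not $g_1$ itself, so I must recover the missing direction. Since $\overline{B_T(X)}=\overline{\mathrm{span}}\{U_T^kg_i\}=\overline{\R g_1+S}$, either $g_1\in S$, whence $S=\overline{B_T(X)}$ as wanted, or $g_1\notin S$ and Hahn--Banach provides a nonzero $\ell\in(\overline{B_T(X)})^*$ with $\ell|_S=0$ and $\ell(g_1)=1$. In the latter case $\ell(U_T^kg_1)=\ell(g_1)$ for all $k$ and $\ell(U_T^kg_j)=0$ for $j\geq 2$, so $\ell\circ U_T=\ell$ on the dense span of $\{U_T^kg_i\}$; that is, $\ell$ is a nonzero $\underline U_T$-invariant continuous functional on $\overline{B_T(X)}$. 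The heart of the proof is to rule this out. Extending such an $\ell$ to a finite signed measure $m\in C(X)^*$, invariance of $\ell$ forces $\nu:=T_*m-m$ to annihilate $\overline{B_T(X)}$, hence to be a $T$-invariant signed measure; moreover $\nu\neq0$, since an invariant measure vanishes on $\overline{B_T(X)}$ whereas $\ell=m|_{\overline{B_T(X)}}\neq0$. Then $T_*^nm=m+n\nu$, so $\norm{m}=\norm{T_*^nm}=\norm{m+n\nu}\to\infty$ as $T_*$ is a total-variation isometry, a contradiction. This forces $g_1\in S$, establishes $\overline{B_T(X)}\subseteq V$, and yields $\M(T)\leq p+r-1=\M(\tilde U_T)+\M(\underline U_T)-1$.
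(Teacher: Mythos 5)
Your proof is correct, and while its outer scaffolding coincides with the paper's (the same trick of absorbing the constant into one coboundary generator: your family $\{g_1+\mathbb{1},g_2,\dots,g_r,f_2,\dots,f_p\}$ is exactly the paper's $\mathcal F\cup\mathcal G$ with $g_1$ replaced by $g_1'=g_1+\mathbb 1$ and the redundancy removed), the crucial step is handled by a genuinely different mechanism. The paper recovers the constant function inside $V$: after normalizing $\int f_i\,d\nu_j=\delta_{i,j}$ via Remark \ref{r}, it invokes Lemma \ref{lem:-1}, a dynamical argument in which the Birkhoff averages $\frac1N\sum_{n<N}f\circ T^n$ converge uniformly to $\mathbb 1$ (this is where $\sharp\mathcal M_e(X,T)<\infty$ enters), whence $\mathbb 1$ and then $g_1$ lie in $V$. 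You never recover $\mathbb 1$ directly; instead you show that the orbit \emph{differences} of $g_1$ together with the orbits of $g_2,\dots,g_r$ already span $\overline{B_T(X)}$ densely, by duality: a separating functional would be $\underline{U}_T$-invariant, and extending it to a signed measure $m$ you rule it out via the linear-growth identity $T_*^nm=m+n\nu$ with $\nu=T_*m-m$ a nonzero invariant signed measure, contradicting that $T_*$ is a total-variation isometry (all intermediate steps — $\nu$ annihilates $\overline{B_T(X)}$ hence is invariant, $\nu\neq 0$, the norm blow-up — check out). Two remarks on what each approach buys. The paper's route is short given Lemma \ref{lem:-1}, which it reuses elsewhere (e.g.\ in Theorem \ref{embmul}); your route is purely functional-analytic, avoids Birkhoff averages altogether, and in passing establishes the stronger fact that $\underline{U}_T-\Id$ has dense range in $\overline{B_T(X)}$ (equivalently, no nonzero $\underline{U}_T$-invariant continuous functional exists), which refines the paper's remark that $W:C(X)\rightarrow\overline{B_T(X)}$, $f\mapsto f\circ T-f$, has dense image. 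Your observation that $\tilde U_T=\Id$ on the quotient — which the paper never states explicitly — also lets you bypass the matrix normalization of Remark \ref{r} and reduces the quotient-level bookkeeping to linear algebra in $\R^p$. One shared caveat: both your proof and the paper's implicitly assume $\M(\underline{U}_T)\geq 1$; in the excluded case $\overline{B_T(X)}=\{0\}$, i.e.\ $T=\Id$, with $X$ finite, the stated inequality actually fails ($\M(T)=\sharp X$ while the right-hand side is $\sharp X-1$), so this is an edge-case imprecision of Proposition \ref{mu} itself, which you at least flag where the paper is silent.
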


In particular if $(X,T)$ is uniquely ergodic, $\M(T)=\M(\underline{U}_T)$ by Lemma \ref{erg}. Let us now prove Proposition \ref{mu}. For a family $F$ of $C(X)$, we write $\overline{F}$ the subset of $C(X)/\overline{B_T(X)}$ consisting of $\overline{f}= f \mod \overline{B_T(X)}$ for  $f\in F$. We start with a technical lemma.

\begin{lem}\label{lem:-1}
	Let $(X,T)$  be an invertible dynamical system with $\sharp \mathcal M_e(X,T)< \infty$. If $F$ is a  family of $C(X)$ such that $\overline F$ is generating   for $\tilde{U}_T$, then the constant function $\mathbb 1$ belongs to $V_F$. 
\end{lem}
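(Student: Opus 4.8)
The plan is to argue by duality. Since $V_F$ is a closed subspace of $C(X)$, the Hahn--Banach theorem tells us that $\mathbb 1\in V_F$ unless some continuous linear functional vanishes on $V_F$ but not at $\mathbb 1$; by the Riesz representation theorem such a functional is integration against a signed Radon measure. So I would suppose for contradiction that there is a signed measure $\mu$ with $\int g\,d\mu=0$ for every $g\in V_F$ while $\mu(X)=\int\mathbb 1\,d\mu\neq 0$. Writing $T^k_*\mu$ for the pushforward determined by $\int f\,d(T^k_*\mu)=\int f\circ T^k\,d\mu$, the fact that $f\circ T^k\in V_F$ for all $f\in F$ and $k\in\Z$ gives $\int f\,d(T^k_*\mu)=0$.

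The next step is to manufacture a $T$-invariant signed measure out of $\mu$ by Krylov--Bogolyubov averaging. Set $\mu_n=\frac1n\sum_{k=0}^{n-1}T^k_*\mu$; these have total variation at most $\|\mu\|$, so by Banach--Alaoglu together with the weak-$*$ metrizability of the ball (recall $C(X)$ is separable) a subsequence converges weak-$*$ to a signed measure $\mu_\infty$. A standard telescoping estimate, $\int g\circ T\,d\mu_n-\int g\,d\mu_n=\tfrac1n(\int g\circ T^n\,d\mu-\int g\,d\mu)\to 0$, shows $\mu_\infty$ is $T$-invariant. Crucially $\mu_\infty(X)=\mu(X)\neq 0$, since $\mu_n(X)=\mu(X)$ for every $n$ and $\mathbb 1\in C(X)$; and since $\int f\,d\mu_n=\frac1n\sum_k\int f\,d(T^k_*\mu)=0$, we retain $\int f\,d\mu_\infty=0$ for all $f\in F$.

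Finally I would feed in the generating hypothesis through the quotient. The essentially free observation is that $\tilde U_T$ is the identity on $C(X)/\overline{B_T(X)}$, because $f\circ T-f$ is a genuine coboundary and hence $\overline{f\circ T}=\overline f$; consequently $\overline F$ being generating for $\tilde U_T$ means exactly that $\{\overline f:f\in F\}$ has dense linear span in $C(X)/\overline{B_T(X)}$. On the other hand, a $T$-invariant signed measure annihilates every coboundary $\phi\circ T-\phi$ and thus, by continuity, all of $\overline{B_T(X)}$; therefore $g\mapsto\int g\,d\mu_\infty$ descends to a continuous functional $\ell$ on the quotient with $\ell(\overline f)=\int f\,d\mu_\infty=0$ for all $f\in F$. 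As these classes span a dense subspace, $\ell\equiv 0$, so $\int g\,d\mu_\infty=0$ for every $g\in C(X)$ and $\mu_\infty=0$, contradicting $\mu_\infty(X)\neq 0$.

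The hard part is not any single estimate but the passage to the weak-$*$ limit: one must guarantee that $\mu_\infty$ is simultaneously invariant, kills every $f\in F$, and keeps $\mu_\infty(X)\neq 0$, which is why it pays to track the total mass $\mu(X)$ from the outset. The hypothesis $\sharp\mathcal M_e(X,T)<\infty$ is what guarantees (via Lemma \ref{erg}) that $\overline F$ can generate the finite-dimensional quotient with finitely many classes, and it permits a concrete reformulation of the last step in the spirit of Remark \ref{r}: decomposing the invariant measure as $\mu_\infty=\sum_i c_i\nu_i$ and reading $\int f_j\,d\mu_\infty=0$ as $A^{\mathsf T}c=0$ for the invertible matrix $A=(\int f_j\,d\nu_i)$ forces $c=0$, hence $\mu_\infty=0$ again.
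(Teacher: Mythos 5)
Your proof is correct, but it takes a genuinely different route from the paper's. The paper argues \emph{constructively}: using Remark \ref{r} it replaces $F$ by an invertible linear recombination so that $\int f_i\,d\nu_j=\delta_{i,j}$, sets $f=\sum_i f_i$ so that $\int f\,d\nu=1$ for \emph{every} invariant $\nu$, and then shows by the standard empirical-measure contradiction that the Birkhoff averages $\frac1N\sum_{n=0}^{N-1}f\circ T^n$ converge uniformly to $\mathbb 1$ — thus exhibiting explicit elements of $V_F$ approximating $\mathbb 1$, in the spirit of the unique-ergodicity criterion. You instead argue by \emph{duality}: Hahn--Banach and Riesz give a separating signed measure, Krylov--Bogolyubov averaging (with the telescoping estimate and conservation of total mass $\mu_n(X)=\mu(X)$) produces an invariant signed measure $\mu_\infty\neq 0$ annihilating $F$, and then your observation that $\tilde U_T$ is the \emph{identity} on $C(X)/\overline{B_T(X)}$ — so that ``$\overline F$ generating'' means exactly that $\{\overline f: f\in F\}$ has dense span in the quotient — lets $\mu_\infty$, which kills $\overline{B_T(X)}$ by invariance, descend to a functional vanishing on a dense set, forcing $\mu_\infty=0$. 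All steps check out (separability of $C(X)$ for metrizability of the ball, $\|T^k_*\mu\|=\|\mu\|$, well-definedness and continuity of the quotient functional $\ell$). What your approach buys is generality: as written it never uses the hypothesis $\sharp\mathcal M_e(X,T)<\infty$, nor finiteness of $F$, so you have in fact proved the lemma without that assumption — your closing remark misattributes the hypothesis's role, since in your argument it is simply not needed (in the paper it is essential to the matrix normalization via Remark \ref{r}, and more broadly to the existence of finite generating families). Your duality style also dovetails with the paper's own argument in Theorem \ref{embmul}, where a signed measure and its Jordan decomposition play the analogous role; the averaging of the separating measure is your addition. What the paper's approach buys in exchange is an explicit approximating sequence — uniform convergence of ergodic averages of a single concrete function in $\mathrm{span}(F)$ — rather than a pure existence statement.
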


\begin{proof}
	Let $\mathcal M_e(X, T)=\{\nu_1,\cdots,\nu_p\}$.  By Remark \ref{r} the matrix $(\int f_i\,d\nu_j)_{1\le i,j\le p}$ is invertible. Then by replacing  $F=\{f_1,\cdots, f_p\}$ by some invertible linear combinations we can assume 
	$$
	\forall 1\leq i,j\leq p, \ \int f_i\,d\nu_j=\delta_{i,j},
	$$
 where $\delta_{i,j}$ is equal to $1$ if $i=j$ and $0$ otherwise.
 Let $f=\sum_{i=1}^p f_i$. We have 
	\begin{align}\label{eq:nu}
	\forall 1\leq j\leq p, &\int f \, d\nu_j=\sum_i\delta_{i,j}=1,\nonumber\\
	\text{therefore, } \forall \nu\in \mathcal{M}(X, T),&
		\int f \, d\nu=1.
	\end{align}
	 We claim that $\frac{1}{N} \sum_{n=0}^{N-1} f\circ T^n$ is converging uniformly to $\mathbb 1$ as $N$ goes to infinity. If not, there would exist a positive number $\epsilon$, a sequence $(x_k)_{k\ge 1}$ and an increasing sequence $(N_k)_{k\ge 1}$ of positive integers such that  
	\begin{equation}\label{eq:epsilon}
		\left| \frac{1}{N_k} \sum_{n=0}^{N_k-1} f(T^n(x_k)) -1 \right|>\epsilon, \ \forall k\ge 1.
	\end{equation}
	After passing to a subsequence of $(N_k)_{k\ge 1}$, we might assume that $\frac{1}{N_k} \sum_{n=0}^{N_k-1} \delta_{T^n(x_k)}$ is  converging to a $T$-invariant measure $\mu$ in the weak-$*$ topology. It follows from \eqref{eq:epsilon} that 
 $$
 \left| \int f d\mu -1 \right|>\epsilon>0.
 $$
 It is a contradiction to \eqref{eq:nu}. Therefore, $\frac{1}{N} \sum_{n=0}^{N-1} f\circ T^n$ is converging uniformly to $\mathbb 1$ as $N$ goes to infinity, in particular $\mathbb 1\in V_F$.
\end{proof}



\begin{proof}[Proof of Proposition \ref{mu}] Let $\overline{\mathcal F}=\{\overline{f_1}, \cdots,   \overline{f_p}\}$  and $\mathcal G=\{g_1, \cdots, g_q\}$ be generating families  of $\tilde{U}_T$ and $\underline{U}_T$ with $p=\M(\tilde{U}_T)$ and $q=\M(\underline{U}_T)$.  For $1\le l\le p$, take $f_l\in C(X)$ be a function such that $\overline{f_l} = f_l \mod \overline{B_T(X)}$, then let $\mathcal F=\{f_1,\cdots, f_l\}$. One easily checks that $\mathcal F\cup \mathcal G$ is a generating family of $U_T$.  By Lemma \ref{erg} we may write $\mathcal M_e(X, T)=\{\nu_1,\cdots,\nu_p\}$. As in the proof of Lemma \ref{lem:-1}  we may assume without loss of generality $\int f_i\, d\nu_j=\delta_{i,j}$ for any $1\leq i,j\leq p$.   Let $g'_1=g_1+\mathbb  1$, hence $$\left(\int g'_1\, d\nu_i \right)_{1\le i\le p}=(1,\cdots, 1).$$ By Remark \ref{r} the family  $\{ \overline{g'_1}, \overline{f_j}: \ 1<j\le p \}$ is generating for $\tilde{U}_T$. By Lemma \ref{lem:-1} the constant functions, therefore also $g_1$, belongs to $V_{ \{ g'_1, f_j : \   1<j\le p  \} }$. Then $V_{\{ g'_1, f_j,g_i \ : \  1<j\le p, \, 1<i\le q \} }=V_{\{ g'_1, f_j,g_i \ : \  1<j\le p, \, 1\le i\le q \} }$ and $f_1\in V_{\{ g'_1, f_j,g_i \ : \  1<j\le p, \, 1\le i\le q \} }$.
Consequently we get
\begin{align*}
V_{\{ g'_1, f_j,g_i \ : \  1<j\le p, \, 1<i\le q \} }&\supset V_{\mathcal F\cup \mathcal G }= C(X).
\end{align*}
We conclude that $\M(T)\leq p+q-1=\M(\tilde{U_T})+\M(\underline{U}_T)-1.$
\end{proof}

\section{Affine embedding of $(\mathcal M(X), T_*)$ in cubical shifts}

For a topological system $(X,T)$ we denote by $T_*$ the action induced by $T$  on the compact set $\mathcal M(X)$, i.e.  $T_*\mu(\cdot)=\mu(T^{-1}\cdot)$ for all $\mu\in \mathcal M(X)$. Then $(\mathcal M(X), T_*)$ is also a topological system, which is called the {\it induced system} of $(X,T)$.

For  $d\in \mathbb N$ we let $\sigma_d$ be the shift on the simplex $([0,1]^d)^\mathbb Z$. An {\it embedding} of $(X,T)$ in $([0,1]^d)^\mathbb Z$ is a continuous injective map $\phi:X\rightarrow ([0,1]^d)^\mathbb Z$ satisfying $\phi\circ T= \sigma_d \circ \phi$.  Existence of such embedding is related to the mean dimension theory (we refer to \cite{coornaert2015topological} for an introduction). Such an embedding implies that the mean dimension of $(X,T)$ is less than or equal to $d$. Moreover the topological dimension (i.e. Lebesgue covering dimension) $d_n^T$ of the set of $n$-periodic points then also satisfy $\frac{d_n^T}{n}\leq d$.  Conversely it has been shown that minimal 
systems with mean dimension less than $d/2$ can be embedded in the cubical shift $\sigma_d$ \cite{L99,gutman2020embedding}. \\

In this section we consider affine embedding of the induced system $(\mathcal M(X), T_*)$ in cubical shift $\sigma_d$, i.e. the embedding   $\phi:\mathcal M(X)\rightarrow ([0,1]^d)^\mathbb Z$ is affine. In particular we will relate the embedding dimension $d$ with the multiplicity of $(X,T)$.

\subsection{Case of finite sets}
We first deal with the case of a finite set $X$.  Then $T$ is just a permutation of $X$ and $\mathcal M(X)$ is a finite dimensional simplex. We classify the possible affine embedding of $(\mathcal M(X),T_*)$ in the following proposition.

\begin{prop}\label{dimf}
Suppose $X$ is a finite set and $T$ is a a permutation of $X$. Let $\tau_1\cdots \tau_k$ be the decomposition of $T$ into disjoint cycles $\tau_i$ of length $r_i$ for $1\le i\le k$.
\begin{itemize}
    \item[(1)] If there is a nontrivial common factor of $r_i$ for $1\le i\le k$, then there is an affine embedding of $(\mathcal M(X),T_*)$ in $\left( ([0,1]^k)^{\mathbb Z}, \sigma_k\right)$. Such $k$ is sharp.
    \item[(2)] If there is no nontrivial common factor of $r_i$ for $1\le i\le k$, then there is an affine embedding of $(\mathcal M(X),T_*)$ in $\left( ([0,1]^{k-1})^{\mathbb Z}, \sigma_{k-1}\right)$. Such $k-1$ is sharp.
\end{itemize}
\end{prop}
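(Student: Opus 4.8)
The plan is to first reduce the existence of an affine equivariant embedding to a purely linear-algebraic spanning condition on $C(X)\cong\R^X$, and then to read off the optimal dimension from the spectrum of the permutation operator $U_T$.

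\emph{Reduction.} Since $X$ is finite, $\mathcal{M}(X)$ is the simplex with vertices the Dirac masses $\delta_x$, $x\in X$, and an affine map $\mathcal{M}(X)\to\R$ is exactly $\mu\mapsto\int f\,d\mu$ for some $f\in C(X)$; after an affine rescaling of $f$ its range lies in $[0,1]$, which does not affect the spanning condition below because constants are always available. I would observe that an affine $\phi:\mathcal{M}(X)\to([0,1]^d)^{\Z}$ with $\phi\circ T_*=\sigma_d\circ\phi$ is determined by its $0$-th coordinate $\phi_0$, since equivariance forces $\phi_n=\phi_0\circ T_*^n$ for all $n\in\Z$ ($T_*$ being invertible as $T$ is a permutation). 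Writing $\phi_0=(f_1,\dots,f_d)$ with $f_j\in C(X)$, the coordinates of $\phi(\mu)$ are $\int f_j\circ T^n\,d\mu$, and since probability measures on a finite set are separated by integrals against functions, $\phi$ is injective if and only if
\[
\mathrm{span}\{f_j\circ T^n:\ 1\le j\le d,\ n\in\Z\}+\R\mathbb{1}=C(X).
\]
Thus the least $d$ admitting an affine embedding equals the least $d$ for which the cyclic space of $d$ functions, together with the constants, spans $C(X)$.

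\emph{Spectral computation.} Next I would decompose $U_T$ over $\C$. On a single cycle of length $r$ the operator is a cyclic permutation, whose eigenvalues are exactly the $r$-th roots of unity, each simple; hence on $\C^X$ the eigenvalue $\zeta$ occurs with multiplicity $m_\zeta=\#\{i:\zeta^{r_i}=1\}$. In particular $m_1=k=\#\mathcal{M}_e(X,T)=\M(\tilde U_T)$, the eigenvalue-$1$ eigenspace $E_1$ being the $k$-dimensional space of functions constant on each cycle, while its complement is $\overline{B_T(X)}=\{\psi:\sum_{x\in\tau_i}\psi(x)=0\ \forall i\}$. Splitting the spanning condition along $C(X)=E_1\oplus\overline{B_T(X)}$, and using that the projections of the $f_j$ to the two summands may be prescribed independently while $\R\mathbb{1}\subset E_1$, the condition is satisfiable for some $f_1,\dots,f_d$ if and only if simultaneously $d\ge\M(\underline{U}_T)$ (so the $\overline{B_T(X)}$-parts cyclically span $\overline{B_T(X)}$) and $d\ge k-1$ (so the $E_1$-parts together with $\mathbb{1}$ span the $k$-dimensional $E_1$). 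Hence the optimal dimension is $\max\{k-1,\ \M(\underline{U}_T)\}$, where I use that $\M(\underline{U}_T)=\max_{\zeta\ne1}m_\zeta$.

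\emph{The two cases.} Finally I would evaluate $\max_{\zeta\ne1}m_\zeta$. For $\zeta\ne1$ of order $m\ge2$ one has $m_\zeta=\#\{i:m\mid r_i\}$, which equals $k$ precisely when $m\mid\gcd(r_1,\dots,r_k)$. If the $r_i$ admit a nontrivial common factor, choosing $\zeta$ of order dividing it gives $\max_{\zeta\ne1}m_\zeta=k$, so the optimal dimension is $\max\{k-1,k\}=k$, which is case (1). If instead $\gcd(r_1,\dots,r_k)=1$, then $m_\zeta\le k-1$ for all $\zeta\ne1$, so the optimal dimension is $k-1$, which is case (2). In each case existence comes from exhibiting functions realizing the relevant spanning, and sharpness is the matching lower bound $d\ge\max\{k-1,\M(\underline{U}_T)\}$ proved above.

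\emph{Main obstacle.} The delicate point is the passage from the complex spectral picture to \emph{real} generating families: one must verify that the needed complex projections can be realized by real functions with the minimal count, and that the constant function lowers the requirement on $E_1$ by exactly one dimension and by nothing on the remaining eigenspaces. I expect to settle this by a dimension/genericity count for the projections $f\mapsto P_\zeta f$, together with the symmetry $P_{\bar\zeta}f=\overline{P_\zeta f}$ valid for real $f$, which makes the constraints attached to $\zeta$ and $\bar\zeta$ equivalent and yields the equality $\M(\underline{U}_T)=\max_{\zeta\ne1}m_\zeta$ used above.
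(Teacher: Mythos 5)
Your proposal is correct, and it reaches the proposition by a genuinely different route than the paper. The paper works by hand in both directions: for existence it writes down explicit affine embeddings, sending the Dirac mass at a representative of the $i$-th cycle to the periodic $0$--$1$ sequence $(10^{r_i-1})^\infty$ in a suitable coordinate, with injectivity in case (2) checked through a Bezout identity among the numbers $q_i=(r_k,r_i)$; for sharpness in case (1) it compares dimensions of fixed-point sets, $\dim \mathrm{Fix}(T_*^p)=kp-1$ versus $\dim \mathrm{Fix}(\sigma_{k-1}^p)=(k-1)p$, a soft obstruction that needs no spectral theory (sharpness in case (2), which the displayed proof leaves implicit, follows similarly from $\dim\mathrm{Fix}(T_*)=k-1$). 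You instead reduce the minimal affine embedding dimension to a module-theoretic count --- your reduction step is in substance the finite-dimensional case of the paper's Theorem \ref{embmul}, proved later in the same section via Riesz representation and Jordan decomposition --- and then compute the exact answer $\max\{k-1,\,\M(\underline{U}_T)\}$ from the spectral decomposition of the permutation operator, the gcd dichotomy falling out as the condition for the maximal non-unit eigenvalue multiplicity to reach $k$. Your route buys a closed formula for the optimal dimension that handles existence and sharpness in both cases simultaneously, and it ties the result to the quantities $\M(\tilde U_T)$ and $\M(\underline U_T)$ of Section 2; the paper's route buys explicit embeddings and elementarity. Your splitting step is sound: since the spectral projections onto $E_1$ and $\overline{B_T(X)}$ are polynomials in $U_T$ (e.g. $P_1=\frac1N\sum_{n=0}^{N-1}U_T^n$ with $N=\mathrm{lcm}(r_1,\dots,r_k)$), every $U_T$-cyclic subspace splits along $C(X)=E_1\oplus\overline{B_T(X)}$, so the two constraints $d\ge k-1$ and $d\ge \M(\underline{U}_T)$ are indeed independent, simultaneously achievable, and both necessary.

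Two points in your write-up should be completed, both routine. First, in the reduction, separation of probability measures by integrals only yields that the spanning condition implies injectivity; for the converse you must manufacture two distinct probability measures from a proper subspace --- for instance take a nonzero signed measure $\nu$ with $\nu(X)=0$ annihilating the span and perturb the uniform measure to $\mu\pm t\nu$ for small $t>0$, or argue as the paper does in Theorem \ref{embmul} via Jordan decomposition and rescaling. Second, the equality $\M(\underline{U}_T)=\max_{\zeta\neq1}m_\zeta$ over the reals, which you flag as the main obstacle, is settled exactly along the lines you sketch: $U_T^N=\Id$ makes $U_T$ semisimple over $\R$, the isotypic component of a real irreducible factor $p$ of $x^N-1$ is a vector space over the field $\R[x]/(p)$ of dimension $\#\{i:\ p\mid x^{r_i}-1\}$, the minimal number of generators of the semisimple module is the maximum of these dimensions, and the conjugation symmetry $P_{\bar\zeta}f=\overline{P_\zeta f}$ for real $f$ identifies that maximum with $\max_{\zeta\neq1}m_\zeta$. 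With these standard insertions your argument is complete and in fact slightly stronger than the proposition, since it computes the optimal dimension exactly (e.g. it gives $k-1$ rather than merely ``at most $k-1$'' in degenerate cases such as all $r_i=1$).
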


\begin{proof}
 For each $1\le i\le k$ we fix a point $e_i\in X$ in each cycle $\tau_i$, i.e. $\{T^je_i: 0\le j\le r_i \}=X$. Notice that there are continuous maps $a_e:\mathcal M(X)\rightarrow [0,1]$ , $e\in X$, with $\sum_{e\in X} a_e=\mathbb 1$ satisfying
$\mu=\sum_{e\in X}a_e(\mu)\delta_e$ for all $\mu\in \mathcal M(X)$.  \\
(1) Assume there is a nontrivial common factor $p$ of $r_i$ for $1\le i\le k$. Then $$\mathrm{dim}(\mathrm{Fix}(T_*^p))=kp-1,$$
where $\mathrm{Fix}(T_*^p)=\{\mu\in \mathcal M(X): \ T_*^p\mu=\mu\}$. Since $p>1$ and $\mathrm{dim}(\mathrm{Fix}(\sigma_{k-1}^p))=kp-p$, the dynamical system $(\mathcal M(X),T_*)$ can not embed in $\left( ([0,1]^{k-1})^{\mathbb Z}, \sigma_{k-1}\right)$.  

Now we construct the embedding of $(\mathcal M(X),T_*)$ in $\left( ([0,1]^k)^{\mathbb Z}, \sigma_k\right)$. We define firstly a dynamical embedding $\Psi$ of the set of extreme points in $\mathcal M(X)$, which is identified with $X$ through the map $x\mapsto \delta_x$, into $([0,1]^k)^\mathbb Z$ by letting 
$$ \forall i=1,\cdots ,k , \ \forall l\in \mathbb Z, \  \ (\Psi(T^{l}e_i))_i=\sigma^l \left( (10^{r_i-1})^\infty\right);$$
the other components $(\Psi(T^{l}e_i))_j$, $j\neq i$, being chosen to be equal to the $0^\infty$ sequence.  Then we may extend $\Psi$ affinely from the set of extreme points  on $\mathcal M(X)$ by letting 
$$\Psi(\mu)=\sum_{e\in X} a_e(\mu)\Psi(\delta_e).$$
It is easy to check that $\Psi$ is injective which deduces a dynamical embedding $(\mathcal M(X),T_*)$ in $\left( ([0,1]^k)^{\mathbb Z}, \sigma_k\right)$.

(2) Assume there is no nontrivial common factor of $r_i$ for $1\le i\le k$. We have that $k-1$ numbers $q_{i}:=(r_k, r_i),$ $1\le i\le k-1$ are co-prime  where $(a,b)$ are the highest common factor of $a$ and $b$. We define firstly a continuous map $\Psi$ of the set of extreme points in $\mathcal M(X)$ into $([0,1]^{k-1})^\mathbb Z$ by letting 
$$
\ \forall l\in \mathbb Z, \  \ (\Psi(T^{l}e_k))_{j}=\sigma^l \left( (10^{r_i-1})^\infty\right), \forall 1\le j\le k-1,
$$
and
$$ \forall 1\le i\le k-1 , \ \forall l\in \mathbb Z, \  \ (\Psi(T^{l}e_i))_{i}=\sigma^l \left( (10^{r_i-1})^\infty\right); \ \forall j\not=i, \forall l\in \mathbb Z, (\Psi(T^{l}e_i))_j=0^\infty.$$
Then we may extend $\Psi$ affinely from the set of extreme points  on $\mathcal M(X)$ by letting 
$$\Psi(\mu)=\sum_{e\in X} a_e(\mu)\Psi(\delta_e).$$
It remains to show that $\Psi$ is injective. Let $\mu=\sum_{e\in X} b_e \delta_e$ and $\mu'=\sum_{e\in X} b_e' \delta_e$. Suppose $$\Psi(\mu)=\Psi(\mu')=\sum_{e\in X} a_e\Psi(\delta_e)=(c_{i,j})_{1\le i\le k-1, j\in \Z}.$$ 
Since $q_{i}:=(r_k, r_i)$ then there are integers $s_i$ and $t_i$ such that 
$
s_ir_i-t_ir_k=q_i.
$
Let 
$$
u_{i,l}=s_ir_i+l=t_ir_k+q_i+l.
$$
It implies that 
$$
b_{T^le_k}'-b_{T^{l+q_i}e_k}'=b_{T^le_k}-b_{T^{l+q_i}e_k}=c_{i,l}-c_{i,u_{i,l}}.
$$
Since $q_i, 1\le i\le k-1$ are co-prime, there are integers $w_i, 1\le i\le k-1$ such that $\sum_{i=1}^{k-1}w_iq_i=1$. Since
\begin{align*}
    \left(b_{e_k}-b_{T^{w_1q_1}e_k} \right)+&\left(b_{T^{w_1q_1}e_k}-b_{T^{w_1q_1+w_2q_2}e_k} \right) \\ &+\dots+\left(b_{T^{w_1q_1+w_2q_2+\dots+w_{k-2}q_{k-2}}e_k}-b_{T^{w_1q_1+w_2q_2+\dots+w_{k-1}q_{k-1}}e_k} \right)=b_{e_k}-b_{Te_k},
\end{align*}
we have 
$$
b_{e_k}-b_{T^le_k}=b_{e_k}'-b_{T^le_k}', \ \forall l\in \Z.
$$
Since $\sum_{e\in X}b_e=\sum_{e\in X}b_e'=1$, we conclude that $b_{e_k}=b_{e_k'}$ and consequently $b_{e_i}=b_{e_i'}$ by $b_{e_k}+b_{e_i}=b_{e_k}'+b_{e_i}'=c_{i,0}$ for $1\le i\le k-1$. It means that $\mu=\mu'$ and $\Psi$ is injective.

\end{proof}

\begin{rem} For such a permutation $T$, we have $\M(T)=\sharp \mathcal M_e(X,T)=k$, with $k$ being the number of cycles  in the decomposition of $T$.
\end{rem}

\subsubsection{General case}

We consider now a general topological system and relates the dimension of the cubical shift in an affine embedding with the multiplicity of $(X,T)$.

\begin{thm}\label{embmul}
Let $(X,T)$ be a topological system. If $\M(T)$ is equal to $d$, then there is an affine embedding of $(\mathcal M(X), T_*)$ in $(([0,1]^d)^\mathbb Z,\sigma_d)$.  Conversely if 
$(\mathcal M(X), T_*)$ embeds into $(([0,1]^d)^\mathbb Z,\sigma_d)$ then 
\begin{itemize}
\item either $\sharp \mathcal M_e(X,T)\leq d$ and $\M(T)\leq d$,
\item or $\sharp \mathcal M_e(X,T)=d+1$ and $\M(T)=d+1$.
\end{itemize}
\end{thm}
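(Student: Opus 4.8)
Assume $\M(T)=d$ and fix a generating family $\{f_1,\dots,f_d\}$. The plan is to integrate against the shifts of these functions. I define
$$\Phi\colon \mathcal M(X)\to (\mathbb R^d)^{\mathbb Z},\qquad \Phi(\mu)_n=\Big(\int f_i\circ T^n\,d\mu\Big)_{1\le i\le d};$$
it is affine and weak-$*$ continuous, and it satisfies $\Phi\circ T_*=\sigma_d\circ\Phi$ because $\int f_i\circ T^{n}\,d(T_*\mu)=\int f_i\circ T^{n+1}\,d\mu$. Injectivity follows since $\Phi(\mu)=\Phi(\nu)$ forces $\int g\,d\mu=\int g\,d\nu$ for every $g$ in the dense span of $\{f_i\circ T^n\}$, hence for all $g\in C(X)$ by continuity of integration, so $\mu=\nu$ by Riesz. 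Finally $\Phi$ takes values in the box $K^{\mathbb Z}$ with $K=\prod_i[-\|f_i\|_\infty,\|f_i\|_\infty]$, and composing with the coordinatewise affine homeomorphism $K\to[0,1]^d$, which commutes with the shift, yields the desired affine embedding into $(([0,1]^d)^{\mathbb Z},\sigma_d)$.

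\textbf{Converse, setup.} Suppose $\phi\colon\mathcal M(X)\to([0,1]^d)^{\mathbb Z}$ is an affine embedding, and let $\pi_0$ be the projection onto the $0$-th coordinate. Each component of $\pi_0\circ\phi$ is an affine continuous map $\mathcal M(X)\to[0,1]$; by the standard barycentric description of such maps (the extreme points of $\mathcal M(X)$ are the Dirac masses, forming a copy of $X$), there exist $g_1,\dots,g_d\in C(X)$ with $\phi(\mu)_n=(\int g_i\circ T^n\,d\mu)_i$. A Hahn--Banach/Riesz computation then identifies injectivity of $\phi$ with $\overline{\mathrm{span}}(\{g_i\circ T^n\}\cup\{\mathbb 1\})=C(X)$: indeed $\phi(\mu)=\phi(\nu)$ detects exactly the mass-zero signed measures annihilating all $g_i\circ T^n$, and these constitute the annihilator of $V_{\{g_i\}}+\mathbb R\mathbb 1$. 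Thus $\{g_1,\dots,g_d,\mathbb 1\}$ is a generating family, so $\M(T)\le d+1$, and by Corollary \ref{err} the number $p:=\sharp\mathcal M_e(X,T)$ is finite with $p\le\M(T)\le d+1$.

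\textbf{The dichotomy.} If $p=d+1$, then Corollary \ref{err} gives $\M(T)\ge d+1$, whence $\M(T)=d+1$. The substantive case is $p\le d$, where I must upgrade $\M(T)\le d+1$ to $\M(T)\le d$; this is the step I expect to be the main obstacle. Here the $g_i$ play a double role. First, applying Lemma \ref{ooo} to $W\colon C(X)\to\overline{B_T(X)}$, $f\mapsto f\circ T-f$ on the generating family $\{g_1,\dots,g_d,\mathbb 1\}$, and using $W(\mathbb 1)=0$, the coboundaries $b_i:=g_i\circ T-g_i$ generate $\underline U_T$, so $\overline{B_T(X)}\subseteq V_{\{g_i\}}$. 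Second, via Lemma \ref{erg} the quotient $C(X)/\overline{B_T(X)}$ is identified with $\mathbb R^p$ on which $\tilde U_T$ acts trivially, and the images of $\{g_i,\mathbb 1\}$ span $\mathbb R^p$.

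\textbf{Conclusion in the hard case.} If the images of $g_1,\dots,g_d$ alone already span $\mathbb R^p$, then $V_{\{g_i\}}$ contains $\overline{B_T(X)}$ and surjects onto the quotient, forcing $V_{\{g_i\}}=C(X)$ and hence $\M(T)\le d$. Otherwise these images span a hyperplane of $\mathbb R^p$ missing the image $\mathbf 1$ of $\mathbb 1$; since $d\ge p$, I can replace a single $g_{i_0}$ by $g_{i_0}+\mathbb 1$, which leaves every $b_i$ unchanged (so the coboundary side still generates $\overline{B_T(X)}$) while pushing the image family to span all of $\mathbb R^p$, reducing to the previous case with a family of size $d$. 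The crux I would treat most carefully is exactly this simultaneous bookkeeping — keeping the coboundary-generation of $\underline U_T$ while arranging the images to span the finite-dimensional quotient — together with the observation that adding a constant to one generator is free for the coboundary side, which is what lets the bound drop from $d+1$ to $d$ precisely when $\sharp\mathcal M_e(X,T)\le d$.
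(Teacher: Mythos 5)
Your proof is correct. The forward direction and the Riesz--Jordan-decomposition argument showing that $\{g_1,\dots,g_d,\mathbb 1\}$ is generating (hence $\M(T)\le d+1$, with $\M(T)=d+1$ forced by Corollary \ref{err} when $\sharp\mathcal M_e(X,T)=d+1$) coincide with the paper's; where you genuinely diverge is the reduction from $d+1$ to $d$ when $p:=\sharp\mathcal M_e(X,T)\le d$. The paper works on the invariant simplex: it extracts from injectivity of the embedding a subset $A$ with the $p\times p$ matrix $\left(\int f_i\,d\nu\right)_{i\in A,\,\nu\in\mathcal M_e(X,T)}$ invertible, concludes via Remark \ref{r} that $\overline{F\setminus\{\mathbb 1\}}$ generates $\tilde U_T$, and then absorbs the constant by Lemma \ref{lem:-1} (uniform convergence of Birkhoff averages). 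You instead split along the coboundary space: Lemma \ref{ooo} applied to $W=U_T-\Id$, which has dense image in $\overline{B_T(X)}$ and kills $\mathbb 1$, gives $\overline{B_T(X)}=V_{\{b_i\}}\subseteq V_{\{g_i\}}$; the quotient is $\R^p$ with trivial induced action; and the constant is absorbed by translating a single generator, $g_{i_0}\mapsto g_{i_0}+\mathbb 1$, which is invisible on the coboundary side. One point you should make explicit: $i_0$ must be chosen so that the image $\overline{g}_{i_0}$ already lies in the span of the other images (otherwise the modified family could span only a $(p-1)$-dimensional subspace); such an $i_0$ exists precisely because the $d$ images sit in the $(p-1)$-dimensional space $H$, i.e.\ because $d\ge p$ forces a linear dependence --- evidently what your invocation of $d\ge p$ is for, but it deserves a sentence. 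Your route also buys genuine robustness: the paper's step ``by injectivity there exists an invertible $p\times p$ submatrix'' is delicate, since injectivity of an affine map on the simplex $\mathcal M(X,T)$ a priori yields only affine independence of the vectors $\left(\int f_i\,d\nu_j\right)_i$, hence rank at least $p-1$; a zero row can occur, for instance when some ergodic measure is a Dirac mass at a fixed point where all the $f_i$ vanish, and your argument handles such degenerate cases seamlessly because you only ever need the enlarged family $\{g_i,\mathbb 1\}$ to span the quotient, which is automatic. In exchange, the paper's route produces the explicit matrix criterion of Remark \ref{r}, reused elsewhere, while you rely on the identification $C(X)/\overline{B_T(X)}\cong\R^p$ --- this is the content of the proof of Lemma \ref{erg} (mutual singularity of distinct ergodic measures makes $f\mapsto\left(\int f\,d\nu_i\right)_{1\le i\le p}$ surjective) rather than of its statement, so cite it accordingly.
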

\begin{proof}
	
Firstly, notice that any affine equivariant map $\Psi:(\mathcal M(X), T_*)\rightarrow (([0,1]^d)^\mathbb Z,\sigma_d)$ is of the form $$\Psi_f:\mu\mapsto \left(\int f\circ T^k\, d\mu\right)_{k\in \Z},$$ for some continuous function $f=(f_1,\cdots,f_d):X \rightarrow [0,1]^d$. 
	
Assume the topological multiplicity $\M(X,T)$ is equal to $d$, i.e. there is a family $F=\{f_1,\cdots,f_d\}$ of continuous  functions  such that $V_F=C(X)$.  Let us show the associated map $\Psi_f$ is injective. Let $\mu_1, \mu_2\in \mathcal M(X)$ with $\Psi_f(\mu_1)=\Psi_f(\mu_1)$ i.e. $\int f_i \circ T^k \,d\mu_1=\int f_i \circ T^k \, d\mu_1$ for any $i=1, \cdots, d$ and any $k\in \mathbb Z$. Then by density of $\mathrm{span}( f_i \circ T^k, \ i,\ k)$ in $C(X)$  we have  $$\int g \,d\mu_1=\int g\, d\mu_1 \text{ for all }  g\in C(X),$$ which implies $\mu_1=\mu_2$. Therefore we get the injectivity of $\Psi_f$.

Conversely, assume $\Psi_f$ is injective for $f=(f_1,\cdots,f_d):X \rightarrow [0,1]^d$. Let $F=\{\mathbb 1, f_1,\cdots,f_d \}$. We claim that $V_F=C(X)$. Then if $\sharp \mathcal M_e(X,T)\leq d$,   we get by injectivity of $\Psi_f$ that there exists $A\subset \{1,2, \dots, d \}$ with $\sharp A=\sharp  \mathcal M_e(X,T)$ such that the matrix $\left( \int f_i d\nu\right)_{i\in A, \nu\in  \mathcal M_e(X,T)}$ is invertible. Then by Remark \ref{r} the family $\overline{F\setminus\{\mathbb 1 \}}$ is generating for $\tilde{U}_T$  and consequently $V_F=V_{F\setminus\{\mathbb 1 \}}$ by Lemma \ref{lem:-1}. If $\sharp  \mathcal  M_e(X,T)= d+1$ then we only get $\M(T)=d+1$.

 It remains to show our claim. Assume to the contrary that  $V_F\not=C(X)$. Then by Riesz Theorem there is  a signed finite measure $\mu$  vanishing on each function in $F$. Let  $\mu=\mu^+-\mu^-$ be the Jordan decomposition of $\mu$ (i.e. the measures $\mu^+$ and $\mu^-$ are two finite positive measures which are mutually singular).  Evaluating on the constant function $\mathbb 1$, we get $\mu^+(X)=\mu^-(X)$. Then by rescaling, we may assume both $\mu^-$ and $\mu^+$  belong to $\mathcal M(X)$. Finally we get $\Psi_f(\mu^+)=\Psi_f(\mu^-)$, and therefore $\mu^+=\mu^-$ by injectivity of $\Psi_f$ contradicting therefore the mutual singularity of  $\mu^+ $ and $\mu^-$.

\end{proof}


\subsection{Affine embeddings and Lindenstrauss-Tsukamoto conjecture}
Lindenstrauss and Tsukamoto \cite{lindenstraussTsukamoto2014mean} have conjectured that any topological system with mean dimension $\mdim(X,T)$ less than $d/2$ and such that the dimension $d_n^T$ of the set of $n$-periodic points satisfies $\frac{d_n^T}{n}< d/2$
for any $n\in \mathbb N$ may be embedded in the shift over $([0,1]^d)^\mathbb Z$. As mentioned above it is known for minimal systems. We consider here affine systems,  i.e. affine maps of a simplex. 
Such maps are never minimal, as they always admits at least  one  fixed point.\\

The example below shows that Lindenstrauss-Tsukamoto conjecture does not hold true in the affine category.  Recall that an ergodic system $(X,f,\mathcal B,\mu)$  has a \textit{countable Lebesgue spectrum}, when  there is a countable family $(\psi_n)_{n\in \mathbb N}$ in $L^2_0(\mu)$ such that $\psi_n\circ f^k$, $k\in \mathbb Z$, $n\in \mathbb N$ form a Hilbert basis of 
$L^2_0(\mu)$.

\begin{cor}\label{cor:LT}
There is an affine system  with a unique periodic (fixed) point and zero-topological entropy (in particular $mdim(T)=0$ and $d_n^T=0$ for all $n\ge 1$) which does not embed affinely in $(([0,1]^k)^\mathbb Z,\sigma)$ for any $k\ge 1$.
\end{cor}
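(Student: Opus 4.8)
The plan is to produce the required affine system as the induced system $(\mathcal M(X),T_*)$ of a suitably chosen $(X,T)$, and to forbid embeddability via the converse half of Theorem \ref{embmul}. Indeed, that theorem shows that an affine embedding of $(\mathcal M(X),T_*)$ into $(([0,1]^d)^{\mathbb Z},\sigma_d)$ forces $\M(T)\le d+1<\infty$; hence it suffices to exhibit $(X,T)$ with $\M(T)=\infty$ whose induced affine system still has zero entropy and a single periodic point. To make the multiplicity infinite I would use a spectral obstruction: by Lemma \ref{compa} one has $\M(\mu)\le\M(T)$ for every ergodic $\mu$, and if the (unique) invariant measure has countable Lebesgue spectrum then its Koopman operator on $L^2_0(\mu)$ has maximal spectral multiplicity $\aleph_0$, so $\M(\mu)=\infty$ and therefore $\M(T)=\infty$.

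For the example I would take $(X,T)$ to be the time-one map of the horocycle flow on a compact quotient $\Gamma\backslash\mathrm{PSL}_2(\mathbb R)$ (any zero-entropy system carrying the three properties below would do). The relevant classical inputs are: (i) every time-$t$ map of this flow is uniquely ergodic; (ii) the flow, and hence its time-one map, has countable Lebesgue spectrum; and (iii) the flow has vanishing topological entropy. By (ii) and the previous paragraph $\M(T)=\infty$, so by Theorem \ref{embmul} the affine system $(\mathcal M(X),T_*)$ admits no affine embedding into $(([0,1]^k)^{\mathbb Z},\sigma)$ for any $k\ge 1$.

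It remains to verify that $(\mathcal M(X),T_*)$ satisfies the Lindenstrauss--Tsukamoto hypotheses. Its fixed points are exactly the $T$-invariant measures, so (i) with $t=1$ yields a unique fixed point; more generally its $n$-periodic points are the $T^n$-invariant measures, and the unique ergodicity of every power $T^n$ from (i) shows that this unique fixed point is the only periodic point, whence $d^T_n=0$ for all $n\ge 1$. For the entropy I would invoke the Glasner--Weiss dichotomy for induced systems: a zero-entropy base gives a zero-entropy induced system, so $h_{\mathrm{top}}(\mathcal M(X),T_*)=0$ by (iii). Finite topological entropy in turn forces $\mdim(\mathcal M(X),T_*)=0$ (the $\epsilon$-entropy is bounded by $h_{\mathrm{top}}$, so the metric mean dimension, an upper bound for $\mdim$ by Lindenstrauss--Weiss, vanishes). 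Thus the affine system has $\mdim=0$ and $d^T_n=0$ for all $n$, so the conjecture's hypotheses hold already for $d=1$, yet the system is non-embeddable, contradicting the conjecture in the affine category.

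The main obstacle is securing a single example that is simultaneously of zero entropy and of countable Lebesgue spectrum with all powers uniquely ergodic: countable Lebesgue spectrum usually comes with positive entropy (as for Bernoulli systems), which is exactly why one turns to the horocycle flow, whose unique ergodicity of all time-$t$ maps and Lebesgue spectrum are the delicate ingredients. Once these are granted, the multiplicity bound and the entropy, mean-dimension and periodic-point computations for the induced system are routine.
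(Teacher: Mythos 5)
Your proof is correct, and its logical skeleton coincides with the paper's: both arguments obtain non-embeddability from the converse direction of Theorem \ref{embmul} (whose statement indeed concerns affine embeddings, since every affine equivariant map into a cubical shift has the form $\Psi_f$, so your use of it is legitimate), both force $\M(T)=\infty$ from countable Lebesgue spectrum of the unique invariant measure via Lemma \ref{compa}, both get zero entropy of the induced system from Glasner--Weiss, and both use unique ergodicity of every power of $T$ to see that the unique fixed point of $T_*$ is its only periodic point. The genuine difference is how the base system is produced. The paper starts from an abstract ergodic system with zero entropy and countable Lebesgue spectrum \cite{spec,hor}, chosen \emph{totally ergodic}, and invokes Jewett--Krieger to get a uniquely ergodic topological model; total ergodicity is what makes all powers of the model uniquely ergodic (if $\rho$ is $T^n$-invariant, then $\frac{1}{n}\sum_{0\le k<n}T^k_*\rho=\nu$, so $\rho\ll\nu$, and $T^n$-ergodicity of $\nu$ forces $\rho=\nu$). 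You instead take the concrete time-one map of the horocycle flow on a compact quotient of $\mathrm{PSL}_2(\mathbb R)$, importing Furstenberg--Marcus (unique ergodicity of every time-$t$ map, which directly gives unique ergodicity of all $T^n$), Parasyuk (countable Lebesgue spectrum of the flow, which does pass to the time-one map since Lebesgue spectral type on $\mathbb R$ with infinite multiplicity pushes forward to Lebesgue type on the circle with countable multiplicity --- a point you assert and should justify as here) and Gurevich (zero entropy of the flow). Your route buys a completely explicit example and dispenses with Jewett--Krieger and the abstract spectral constructions; its cost is reliance on three nontrivial theorems of homogeneous dynamics, where the paper's argument is soft once the measure-theoretic example is granted. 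One further simplification available to you: your mean-dimension step via the Lindenstrauss--Weiss bound (finite entropy implies vanishing metric mean dimension, hence vanishing mean dimension) is valid, but inside this paper Theorem \ref{ru} already gives $h_{top}(T)=0\Rightarrow \mdim(T_*)=0$ in one stroke.
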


\begin{proof}
There exists an ergodic measure preserving system $(Y,\mathcal A,f,\mu)$ with zero entropy and countable Lebesgue spectrum \cite{spec, hor} (in particular \text{\it totally ergodic}, i.e. $f^n$ is ergodic for any $n\in \Z$). Then by Jewett-Krieger theorem there is a uniquely ergodic topological system $(X,T)$ with measure $\nu$  realizing such a measure preserving system. All powers of $T$ are uniquely ergodic as $\mu$ was chosen totally ergodic.   Moreover the topological entropy of $T$, thus that of $T_*$ is zero by Glasner-Weiss \cite{glasner1995quasi}. As the unique invariant measure $\nu$ has countable Lebesgue spectrum, the topological multiplicity of $(X,T)$ is infinite by Lemma \ref{compa}.  We conclude with Theorem  \ref{embmul}.
\end{proof}

\subsection{Application: zero topological entropy}

A classical result in ergodic theory  states that any ergodic system $(X,f,\mathcal B,\mu)$ with positive entropy has a countable Lebesgue spectrum. In particular $[h(\mu)>0]\Rightarrow [\M(\mu)=\infty]$. Then it follows from the variational principle for the topological entropy :

\begin{prop}\label{ent}
Any topological system $(X,T)$ with $\M(T)<\infty$ has zero topological entropy.
\end{prop}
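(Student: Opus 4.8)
The plan is to transfer information from the topological multiplicity down to each ergodic invariant measure, and then push the resulting entropy bound back up to topological entropy through the variational principle.

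First I would fix an arbitrary ergodic $T$-invariant measure $\mu\in\mathcal M_e(X,T)$ and apply Lemma \ref{compa}, which yields $\M(\mu)\le\M(T)<\infty$. The implication recorded just before the statement tells us that $h(\mu)>0$ forces $\M(\mu)=\infty$ (an ergodic system with countable Lebesgue spectrum has infinite ergodic multiplicity). Taking the contrapositive, the finiteness of $\M(\mu)$ forces $h(\mu)=0$. Since $\mu$ was an arbitrary ergodic measure, every element of $\mathcal M_e(X,T)$ has zero metric entropy.

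Next I would invoke the variational principle for topological entropy, which in its usual form reads $h_{\mathrm{top}}(T)=\sup_{\mu\in\mathcal M(X,T)}h(\mu)$. Because the metric entropy map $\mu\mapsto h(\mu)$ is affine and obeys the ergodic decomposition identity $h(\mu)=\int h(\nu)\,d\tau(\nu)$ for the ergodic decomposition $\tau$ of $\mu$, the supremum may be restricted to ergodic measures. Having just shown that every ergodic measure has zero entropy, the supremum is zero, whence $h_{\mathrm{top}}(T)=0$.

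The argument is essentially a concatenation of two established facts, so there is no genuine obstacle; the only point requiring a little care is the reduction of the variational supremum to ergodic measures, which rests on the affinity of metric entropy together with the ergodic decomposition theorem. I note that one could alternatively bypass the variational principle altogether and argue purely topologically (as flagged in the introduction), but the route sketched above is the most economical given the tools already assembled.
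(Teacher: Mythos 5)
Your argument is correct and is essentially the paper's own proof: Lemma \ref{compa} bounds $\M(\mu)$ by $\M(T)$, the Rokhlin--Sinai fact that positive entropy forces countable Lebesgue spectrum (hence $\M(\mu)=\infty$) gives $h(\mu)=0$ for every ergodic $\mu$, and the variational principle (with the standard reduction to ergodic measures) concludes. The paper also records a second, purely topological proof via the affine embedding of $(\mathcal M(X),T_*)$ and Theorem \ref{ru}, which you correctly flagged but did not need.
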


We may also give a purely topological proof of Proposition \ref{ent} based on mean dimension theory. More precisely we use the main result of \cite{burguet2022topological}, which  states as follows: 

\begin{thm}\cite{burguet2022topological}\label{ru}
For any topological system $(X,T)$ with positive topological entropy, the induced system $(\mathcal M(X), T_*)$ has infinite topological mean dimension. Therefore, $$h_{top}(T)>0\Leftrightarrow \mdim(T_*)>0 \Leftrightarrow \mdim(T_*)=\infty.$$
\end{thm}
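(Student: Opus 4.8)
The plan is to reduce the stated equivalences to a single exponential lower bound on the width dimension of the induced system, and to dispatch the remaining implications softly. Write $h:=h_{top}(T)$. The only substantial content is the implication $h>0\Rightarrow \mdim(T_*)=\infty$; the implication $\mdim(T_*)=\infty\Rightarrow \mdim(T_*)>0$ is vacuous, and the reverse implication $\mdim(T_*)>0\Rightarrow h>0$ follows by contraposition from the assertion $[h=0\Rightarrow \mdim(T_*)=0]$.

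For this last, easy direction I would invoke Glasner--Weiss \cite{glasner1995quasi}: if $h=0$ then $h_{top}(T_*)=0$ as well. Writing $h_{\mathrm{cov}}(T_*,\epsilon)$ for the scale-$\epsilon$ entropy of $T_*$, which is non-decreasing as $\epsilon\to 0$ and has supremum $h_{top}(T_*)$, vanishing topological entropy forces $h_{\mathrm{cov}}(T_*,\epsilon)=0$ for every $\epsilon>0$. Combining this with the standard inequality $\mdim(T_*)\le \overline{\mdim}_M(T_*)=\liminf_{\epsilon\to 0} h_{\mathrm{cov}}(T_*,\epsilon)/\log(1/\epsilon)$ between mean dimension and metric mean dimension yields $\mdim(T_*)=0$.

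The core is the lower bound. Fix a metric $D$ on $\mathcal M(X)$ compatible with the weak-$*$ topology, and let $D_n$ denote the associated dynamical metric on $\mathcal M(X)$. Since $h>0$, fix $0<\eta<h$ and $\delta>0$ so that for all large $m$ there is an $(m,\delta)$-separated set $E_m\subset X$ with $\sharp E_m\ge e^{(h-\eta)m}$. Map the abstract simplex on the vertex set $E_m$ into $\mathcal M(X)$ by the affine map $\Phi_m:(p_x)_{x\in E_m}\mapsto \sum_{x\in E_m}p_x\,\delta_x$; its image is a simplex of dimension $\sharp E_m-1$. The key claim is that $\Phi_m$ is an $\epsilon_0$-embedding for the metric $D_m$ on $\mathcal M(X)$, with $\epsilon_0>0$ depending only on $\delta$ (crucially, not on $m$). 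Granting this, the standard fact $\wdim_{\epsilon_0}(\Delta^{N-1})=N-1$ for a fat $(N-1)$-simplex transfers to give $\wdim_{\epsilon_0}(\mathcal M(X),D_m)\ge \sharp E_m-1\ge e^{(h-\eta)m}-1$. Dividing by $m$ and letting $m\to\infty$ gives $\limsup_m \wdim_{\epsilon_0}(\mathcal M(X),D_m)/m=\infty$ at the single fixed scale $\epsilon_0$; since $\wdim_\epsilon$ is non-increasing in $\epsilon$, the same holds for every $\epsilon\le\epsilon_0$, whence $\mdim(T_*)=\infty$.

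The hard step, and the main obstacle, is the scale-uniform claim that $\Phi_m$ retains the full width $\sharp E_m-1$ at a scale $\epsilon_0$ independent of $m$. For this I would, for each $x\in E_m$, choose a time $0\le k_x<m$ and a $1$-Lipschitz test function $g_x$ separating the orbit of $x$ from the orbits of the other points of $E_m$ at time $k_x$ by at least $\delta/2$; assembling these into the linear map $\mu\mapsto (\int g_x\circ T^{k_x}\,d\mu)_{x\in E_m}$ from $\mathcal M(X)$ to $\mathbb R^{\sharp E_m}$ should approximately recover the barycentric coordinates on the image simplex, exhibiting it as a fat standard simplex whose width equals its dimension, with constants controlled solely by $\delta$. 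The delicate points are that separation at a single time $k_x$ must be promoted to a genuine lower bound on barycentric coordinates uniformly over the whole simplex --- which requires the test functions not to interfere across distinct points of $E_m$, handled by a partition-of-unity/combinatorial selection adapted to the $(m,\delta)$-separation --- and that all the resulting metric constants stay bounded below as $m\to\infty$, so that $\epsilon_0$ does not degenerate against the exponentially growing dimension.
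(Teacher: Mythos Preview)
The paper does not prove this theorem; it is quoted from \cite{burguet2022topological} and used only as input to the alternative proof of Proposition~\ref{ent}. There is therefore no in-paper argument to compare against.

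Your reductions and the easy direction (via Glasner--Weiss together with the Lindenstrauss--Weiss bound $\mdim\le\underline{\mdim}_M$) are correct, modulo the harmless slip of writing $\overline{\mdim}_M$ for the liminf quantity. The genuine gap is in the hard direction. Your width estimate hinges on choosing, for each $x\in E_m$, a \emph{single} time $k_x$ at which $T^{k_x}x$ is $\delta/2$-far from $T^{k_x}y$ for \emph{every} other $y\in E_m$; but $(m,\delta)$-separation only supplies, for each \emph{pair} $(x,y)$, a pair-dependent time $k_{x,y}$, and no common $k_x$ need exist (already in a full shift, distinct length-$m$ words may agree everywhere except at one coordinate that varies with the pair). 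Without such a time the map $\mu\mapsto(\int g_x\circ T^{k_x}\,d\mu)_x$ does not approximate barycentric coordinates, and the width bound does not follow. More fundamentally, $D_m$-separation of the vertices $\delta_x$ alone cannot force the simplex $\Phi_m(\Delta)$ to be fat at a scale independent of $m$: well-separated points can span arbitrarily thin simplices in a convex body. A correct argument must exploit the affine structure of $(\mathcal M(X),T_*)$ more substantially than a vertex-by-vertex test. In the model case of a full shift one embeds genuine cubes via product measures $\bigotimes_n\nu_n$, whose $D_m$-width is read off from one-dimensional marginals; the general positive-entropy case requires an analogous construction adapted to separated orbit segments, not the linear test map you propose.
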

\begin{proof}[Topological proof of Proposition \ref{ent}]
Assume $\M(T)=d$ is finite. Then by Theorem \ref{embmul} the induced system  $(\mathcal M(X), T_*)$ embeds in the cubical shift $\left(([0,1]^d)^\mathbb{Z},\sigma\right)$. In particular the mean dimension of $T_*$ is less than or equal to the mean dimension of the shift $\left(([0,1]^d)^\mathbb{Z},\sigma\right)$, which is equal to $d$. By Theorem \ref{ru}, it implies that $T$ has zero topological entropy. 
\end{proof}

\section{Baxter's Lemma in Banach spaces}

In \cite{bax}, Baxter gave  a useful criterion to show simple spectrum of ergodic transformations. It may be extended more generally  to bound the multiplicity of the spectrum (e.g. see Proposition 2.12 in \cite{queffelec2010substitution}). We generalize this criterion for operators defined on a Banach space. It will be used in the next section to estimate the topological multiplicity in some examples.   

\begin{lem}\label{cyclic}
	Let $B$ be a separable Banach space and $\mathcal L(B)$ be the set of bounded linear operator on $B$.  We consider  an invertible isometry  $U\in\mathcal L(B)$. If $(\mathcal {F}_n)_n$ is a sequence of finite subsets in $H$ satisfying for all $f\in B$ 
	\begin{eqnarray}\label{baxtou} \inf_{F_n\in V_{\mathcal F_n}}\|F_n-f\|\xrightarrow{n\rightarrow \infty} 0
	\end{eqnarray}
	
	 then there exists a family $\mathcal {F}\subset B$ with $\sharp\mathcal {F} \le \sup_n \sharp \mathcal {F}_n$ and
	$B=V_\mathcal {F}.$
	
\end{lem}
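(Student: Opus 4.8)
The plan is to first dispose of trivial cases and reduce the conclusion to a clean membership statement. If $\sup_n\sharp\mathcal F_n=\infty$ there is nothing to prove, since $B$ is separable and any countable dense subset is already a generating family of cardinality $\le\infty$. So set $m:=\sup_n\sharp\mathcal F_n<\infty$. Because $V_{\mathcal F_n}$ is unchanged under rescaling its elements and $U$ is an isometry, I may discard null vectors and normalize, assuming $\mathcal F_n=\{f^{(n)}_1,\dots,f^{(n)}_m\}$ with $\norm{f^{(n)}_i}\le 1$ after passing to a subsequence along which the cardinalities are constant. Next, note that \eqref{baxtou} gives $\overline{\bigcup_N V_{\mathcal F_N}}=B$. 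Hence it suffices to produce a family $\mathcal F=\{f_1,\dots,f_m\}$ with $f^{(N)}_i\in V_{\mathcal F}$ for every $N$ and $i$: then $V_{\mathcal F_N}\subset V_{\mathcal F}$ for all $N$ (as $V_{\mathcal F}$ is closed and $U$-invariant), whence $V_{\mathcal F}=B$ and $\sharp\mathcal F\le m$.

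The engine of the construction is a single stability estimate supplied by the isometry hypothesis. For Laurent polynomials $P_i(U)=\sum_k c_{i,k}U^k$ and families $\{a_i\},\{b_i\}$ with $\norm{a_i-b_i}\le\eta$, the identity $\norm{U^k(a_i-b_i)}=\norm{a_i-b_i}$ yields $\norm{\sum_i P_i(U)a_i-\sum_i P_i(U)b_i}\le \eta\sum_i\norm{P_i}_1$, where $\norm{P_i}_1=\sum_k|c_{i,k}|$. Thus approximate membership of a vector in a cyclic span is \emph{stable} under perturbing the generating family, with a tolerance inversely proportional to the $\ell^1$-size of the coefficients used. I would then build $\mathcal F$ as a norm limit of families $\mathcal G_t$ obtained by an adaptive diagonal procedure, $f_i=\lim_t g^{(t)}_i$ with $\sum_t\norm{g^{(t+1)}_i-g^{(t)}_i}<\infty$; concretely one may take $f_i=\sum_t\lambda_t f^{(n_t)}_i$ for summable $\lambda_t>0$ and indices $n_t$ chosen inductively.

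The verification is organised around a dense sequence $(g_j)_j$ of $B$. I would enumerate tasks ``bring $g_j$ within $1/r$ of the cyclic span'' and treat them one at a time. For the current task I pick $n_t$ large enough that, by \eqref{baxtou}, some combination $\sum_i P_i(U)f^{(n_t)}_i$ approximates the target within half the required error; I record the coefficient size $C_t=\sum_i\norm{P_i}_1$, and then impose on all later stages that the total remaining perturbation of each generator be at most $\tfrac{1}{2rC_t}$. By the stability estimate the approximation survives passage to the limit $\mathcal F$, so every $g_j$ lies in $V_{\mathcal F}$. Since the per-stage perturbations are kept summable and fast decaying, these countably many tail constraints can all be met simultaneously, and with the reduction above this gives $V_{\mathcal F}=B$.

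The main obstacle, and the genuinely new point compared with the Hilbert-space statement, lies in the inductive step: capturing a brand-new target appears to force a large change of the $m$ generators (to align them with $\mathcal F_{n_t}$), which conflicts with norm-convergence, and dually the contributions of the successive layers $f^{(n_t)}_i$ must be prevented from cancelling, lest the cyclic span of the single limit family collapse to a proper subspace. I would control this by exploiting \eqref{baxtou} applied to the generators themselves: for fixed $s$ and large $t$ each $f^{(n_s)}_i$ is approximately in $V_{\mathcal F_{n_t}}$, so the cyclic spans are approximately nested, and the indices $n_t$ together with the weights $\lambda_t$ can be chosen inductively so that the layer entering at stage $t$ is, up to a controlled error, carried by the later layers rather than annihilated by them. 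Making this quantitative, while keeping the generating family norm-convergent, is where the separability of $B$ and the isometry of $U$ are essential, and is the heart of the argument.
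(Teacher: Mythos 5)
Your proposal contains a genuine gap, and it is exactly the one you flag in your final paragraph: the argument never closes the loop between the approximating families $\mathcal F_{n_t}$ and the limit family $\mathcal F$. Your stability estimate $\bigl\|\sum_i P_i(U)a_i-\sum_i P_i(U)b_i\bigr\|\le \eta\sum_i\norm{P_i}_1$ controls what happens when a family is perturbed \emph{while the polynomial coefficients are held fixed}; but in your scheme the task ``bring $g_j$ within $1/r$'' is solved using coefficients adapted to $\mathcal F_{n_t}$, whereas the vector that must eventually do the job is $f_i=\sum_t\lambda_t f^{(n_t)}_i$, which is \emph{not} a small perturbation of $\lambda_t f^{(n_t)}_i$: the earlier layers $\lambda_s f^{(n_s)}_i$, $s<t$, enter with relative weight $\lambda_s/\lambda_t$ that cannot be made small without destroying summability, and the later layers can only be made small relative to $\lambda_t$ at the cost of not carrying anything. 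Your proposed fix --- that for fixed $s$ and large $t$ each $f^{(n_s)}_i$ is approximately in $V_{\mathcal F_{n_t}}$, so the spans are ``approximately nested'' --- does not close the estimate, because the hypothesis on $(\mathcal F_n)_n$ gives approximation with \emph{no control on the $\ell^1$-size of the coefficients}: each transfer from layer $s$ to layer $t$ multiplies the admissible perturbation budget by an uncontrolled constant $C$, and the tail constraints $\le \tfrac{1}{2rC_t}$ must then be revised retroactively. Nothing prevents the cyclic space of the summed family from collapsing (your ``cancellation'' worry is a real obstruction, not a technicality), and you candidly concede that making the step quantitative ``is the heart of the argument'' --- i.e., the decisive step is asserted, not proved. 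Note also that the isometry hypothesis is used in your draft only for the trivial bound $\norm{U^k x}=\norm{x}$, whereas it must play a genuinely spectral role.

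The missing idea, which is how the paper resolves precisely this circularity, is to replace ``approximate nesting with uncontrolled constants'' by \emph{exact} containment after an arbitrarily small perturbation. Given a candidate family $\mathcal H=\{h_1,\dots,h_m\}$ and $n$ with $h_i\approx\sum_j U^{-p}P_{i,j}(U)f_j$ ($f_j\in\mathcal F_n$), one perturbs the polynomials to $P_{i,j}(\cdot+\lambda)$ for small $\lambda$ so that $Q=\det\bigl(P_{i,j}\bigr)$ does not vanish on $\mathrm{Sp}(U)$, which is contained in the unit circle because $U$ is an invertible isometry; then $Q(U)=\prod_{Q(\mu)=0}(U-\mu\,\mathrm{Id})$ is invertible and $Q(U)^{-1}$ lies in the closed span of the powers $U^k$, $k\in\Z$ (Neumann series, using $\norm{U}=\norm{U^{-1}}=1$ and $|\mu|\neq 1$). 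Setting $h_i'=\sum_j U^{-p}P_{i,j}(U)f_j$ gives a family $\mathcal H'$ that is $\delta$-close to $\mathcal H$ and satisfies $V_{\mathcal H'}\supset V_{\mathcal F_n}$ \emph{exactly}, via $\mathcal F_n=\mathrm{^{t}com}\,M(U)\,U^{p}Q(U)^{-1}\mathcal H'$. This shows that, in the complete separable metric space $B^{(m)}$ of at-most-$m$-element subsets with the Hausdorff distance, the sets of families whose cyclic space $\epsilon$-approximates finitely many prescribed vectors are open and dense; Baire's theorem then produces a single generating family, with no summable-perturbation bookkeeping at all. If you want to salvage your inductive scheme, this determinant--spectrum lemma is the ingredient you would have to insert at each stage; once you have it, the Baire formulation is both shorter and strictly stronger than the explicit series construction.
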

Classical proofs of Baxter's lemma strongly used the Hilbert structure. Here we use a Baire argument as in Lemma 5.2.10 \cite{fer}. Note also that we do not require the sequence of vectors spaces 
$(V_{\mathcal F_n})_n$ to be nondecreasing. Observe finally that it is enough to assume (\ref{baxtou}) for $f$ in $S$, where $S$ spans a dense subset of $B$.

\begin{proof}
	Let $m=\sup_n \sharp \mathcal {F}_n$. If $m=\infty$, it is trivial. Assume $m<\infty$. By passing to a subsequence, we assume $\sharp \mathcal {F}_n=m$ for all $n$. Let $B^{(m)}$ be the space of finite subsets of $H$ whose cardinality is smaller than or equal to $m$. When endowed with  the Hausdorff distance $d_{Hau}$, the space $B^{(m)}$ is a metric space, which is complete and separable.  We assume the following claim, which we prove later on. 
\begin{claim}\label{cl}
 For any $\epsilon>0$ and $\mathcal{F}\in B^{(m)}$, the set $$O(\mathcal F,\epsilon)=\left\{\mathcal{G}\in B^{(m)},  \  \forall f\in \mathcal{F} \  \inf_{G\in V_{\mathcal{G}}} \|G-f\|<\epsilon \right\}$$ is open and  dense.
\end{claim}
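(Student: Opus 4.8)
The plan is to verify the two assertions of Claim \ref{cl} separately: openness is a soft perturbation argument, whereas density is the substantial point and is precisely where the hypothesis \eqref{baxtou} and the isometry property of $U$ enter.

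For openness, fix $\mathcal G=\{g_1,\dots,g_m\}\in O(\mathcal F,\epsilon)$. For each of the finitely many $f\in\mathcal F$ I choose, by definition of $V_{\mathcal G}$, a finite combination $G_f=\sum_{j,k}c^f_{j,k}U^kg_j$ with $\|G_f-f\|<\epsilon$. If $\mathcal G'$ satisfies $d_{Hau}(\mathcal G,\mathcal G')\le\gamma$, then each $g_j$ admits a point $g_j'\in\mathcal G'$ with $\|g_j-g_j'\|\le\gamma$; replacing $g_j$ by $g_j'$ yields $G_f'\in V_{\mathcal G'}$ with
$$\|G_f-G_f'\|\le\sum_{j,k}|c^f_{j,k}|\,\|U^k(g_j-g_j')\|=\sum_{j,k}|c^f_{j,k}|\,\|g_j-g_j'\|\le\gamma\sum_{j,k}|c^f_{j,k}|,$$
where I used that $U$ is an invertible isometry, so $\|U^k\|=1$ for all $k\in\Z$. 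Since the coefficients are fixed and $\mathcal F$ is finite, taking $\gamma$ small enough forces $\|G_f'-f\|<\epsilon$ simultaneously for all $f\in\mathcal F$, i.e. $\mathcal G'\in O(\mathcal F,\epsilon)$; hence $O(\mathcal F,\epsilon)$ is open.

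For density, fix a base family $\mathcal G_0=\{g_1,\dots,g_m\}$ and $\rho>0$, and produce $\mathcal G$ with $d_{Hau}(\mathcal G_0,\mathcal G)<\rho$ lying in $O(\mathcal F,\epsilon)$. Applying \eqref{baxtou} to the finitely many vectors of $\mathcal F\cup\mathcal G_0$, I first choose $n$ so large that $V_{\mathcal F_n}$ approximates every element of $\mathcal F\cup\mathcal G_0$ within an auxiliary error $\delta\ll\epsilon$; write $\mathcal F_n=\{\phi_1,\dots,\phi_m\}$ (passing to a subsequence so that the cardinality is $m$). The idea is to perturb $\mathcal G_0$ by a small multiple of $\mathcal F_n$: set $g_i'=g_i+\eta\phi_i$ and $\mathcal G=\{g_1',\dots,g_m'\}$, with $\eta>0$ so small that $d_{Hau}(\mathcal G_0,\mathcal G)\le\eta\max_i\|\phi_i\|<\rho$. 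It then suffices to show that, for suitable $\eta$ and $\delta$, the cyclic space $V_{\mathcal G}$ recovers a set approximating $V_{\mathcal F_n}$, and in particular approximates each $f\in\mathcal F$ within $\epsilon$; this is the step where the hypothesis is used and the one I expect to be the main obstacle.

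The recovery replaces the classical Hilbert-space argument (which separates the $g$-part from the $\phi$-part by orthogonality and weak nullity of large shifts), unavailable here, by an algebraic inversion. Since each $g_i$ is $\delta$-approximated inside $V_{\mathcal F_n}$, I write $g_i\approx\sum_jP_{ij}\phi_j$ with $P_{ij}=\sum_kp_{ijk}U^k$ finite Laurent polynomials in $U$; in the matrix algebra $M_m(\mathcal A)$ over the closed operator algebra $\mathcal A$ generated by $U^{\pm1}$ this reads $\vec g'=\vec g+\eta\vec\phi\approx(P+\eta I)\vec\phi$, where $\vec g'=(g_i')_i$, $\vec\phi=(\phi_i)_i$. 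If $P+\eta I$ is invertible in $M_m(\mathcal A)$, then $\vec\phi\approx(P+\eta I)^{-1}\vec g'$ exhibits each $\phi_i$ as an $\mathcal A$-combination of the $g_j'$, hence (using that $V_{\mathcal G}$ is closed and $\mathcal A$ is the closure of the Laurent polynomials acting on the $g_j'$) as an element of $V_{\mathcal G}$ up to controlled error; this gives $V_{\mathcal F_n}\subseteq V_{\mathcal G}$ approximately and therefore $\mathcal G\in O(\mathcal F,\epsilon)$. The delicate point is exactly the invertibility of $P+\eta I$ with an inverse of controlled norm: when solving for $\vec\phi$ one incurs an amplification by $\|(P+\eta I)^{-1}\|$ of the $\delta$-sized error, so the bookkeeping is to fix $\eta$ small (for Hausdorff closeness and so that $P+\eta I$ is invertible via a Neumann series once a non-degenerate representation $P$ is secured), and only then choose $\delta$ small enough that the amplified error stays below $\epsilon$. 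Carrying out this two-parameter control is what completes the density proof, and is the analytic heart that substitutes for the Hilbert structure.
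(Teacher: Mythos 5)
Your openness argument is correct and complete (the paper leaves that half implicit), and your overall plan for density --- approximate by $V_{\mathcal F_n}$ using \eqref{baxtou}, encode the relation between the candidate family and $\mathcal F_n$ as a matrix of Laurent polynomials in $U$, and invert that matrix over the closed algebra generated by $U^{\pm 1}$ --- is the same skeleton as the paper's. But there is a genuine gap at exactly the point you flag: the invertibility of $P+\eta I$. A Neumann series inverts $P+\eta I=\eta(I+\eta^{-1}P)$ only when $\|P\|<\eta$, and your $\eta$ must tend to $0$ while $P$ represents the fixed family $\mathcal G_0$, so that criterion fails; "securing a non-degenerate $P$" is the whole difficulty, not a preliminary step, since nothing forces any $\delta$-approximating matrix to be invertible. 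Nor can you argue that all but countably many small $\eta>0$ work: the spectrum of $P$ in the Banach algebra $M_m(\mathcal A)$ is merely a compact set and may contain a whole interval $[-c,0]$. Worse, your scheme needs not just invertibility but a bound on $\left\|(P+\eta I)^{-1}\right\|$, because the $\delta$-error in $g_i\approx\sum_j P_{ij}\phi_j$ is amplified by that norm when you solve for the $\phi_i$; and since $P$ depends on $\delta$ (through the choice of $n$ and of the approximants), the quantifier order ``fix $\eta$, then choose $\delta$'' is circular --- shrinking $\delta$ changes $P$, hence the admissible $\eta$ and the inverse's norm. As written, the two-parameter bookkeeping cannot be closed.

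The paper's proof avoids both problems with a different perturbation. Instead of keeping $\mathcal H$ (your $\mathcal G_0$) and adding $\eta\phi_i$, it replaces $\mathcal H$ outright by $\mathcal H'$ with $h'_i=\sum_j U^{-p}P_{i,j}(U)f_j$ \emph{exactly}: hypothesis \eqref{baxtou} applied to the $h_i$ already gives $d_{Hau}(\mathcal H,\mathcal H')<\delta$, and afterwards one needs only the exact, qualitative inclusion $\mathcal F_n\subset V_{\mathcal H'}$ --- no inverse-norm control at all, because there is no residual error to amplify. Invertibility of $M(U)=(P_{i,j}(U))$ is then arranged by the determinant trick: $Q=\det M$ is a single scalar polynomial with finitely many roots, $\mathrm{Sp}(U)$ lies on the unit circle, and replacing $P_{i,j}$ by $P_{i,j}(\cdot+\lambda)$ for arbitrarily small $\lambda$ moves the roots of $Q$ off the spectrum, after which $Q(U)^{-1}$ is a limit of Laurent polynomials in $U$ via the explicit resolvent series (this is where the isometry hypothesis $\|U\|=\|U^{-1}\|=1$ enters), so $\mathcal F_n=\mathrm{^{t}com}\,M(U)\,U^pQ(U)^{-1}\mathcal H'\subset V_{\mathcal H'}$. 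If you want to salvage your variant you would have to perturb the polynomial matrix itself in this fashion (note $\det(P+\eta I)$ is monic of degree $m$ in $\eta$, hence a nonzero Laurent polynomial for all but finitely many $\eta$, and a further small shift handles roots on $\mathrm{Sp}(U)$) --- but at that point you have reconstructed the paper's argument, and you should still adopt its exact-replacement device to eliminate the norm-amplification and circularity issues.
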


Let $\left(g_q\right)_{q\in \mathbb N}$ be a countable dense family  in $B$.	Let $\mathcal G_q$ be the finite family $\{g_1,\cdots, g_q\}$.  For any $p\in \mathbb N^*$ and any  $q\in \mathbb N $ we consider the open and dense set 
	$$O_{p,q}:= O(\mathcal G_q,1/p)= \left\{\mathcal{G}\in B^{(m)},  \  \forall g\in \mathcal{G}_q \  \inf_{G\in V_{\mathcal{G}}} \|G-g\|<1/p \right\}.$$
	According to Baire's theorem, the intersection $\bigcap_{p,q}O_{p,q}$ is not empty. Clearly any family $\mathcal F$ in the intersection satisfies $V_{\mathcal F}=B$. It remains to prove Claim \ref{cl}. 
	
	\begin{proof}[Proof of Claim \ref{cl}] The set  
$O(\mathcal F,\epsilon)$ is  open. We focus on the denseness property. Pick arbitrary $\delta>0$ and  $\mathcal{H}\in B^{(m)}$. We will show that there is  $\mathcal H'\in O(\mathcal F,\epsilon)$ with $d_{Hau}(\mathcal H,\mathcal H')<\delta$. As the elements of $B^{(m)}$ with cardinality $m$ are dense in $B^{(m)}$ we can assume without loss of generality that $\sharp \mathcal H=m$.  By assumptions on the sequence $\mathcal F_n$, there exists $n$ such that 
	\begin{align}\label{easy}
	 \forall f\in \mathcal{F}\  \inf_{F_n\in V_{\mathcal F_n}} \|F_n-f\|<\epsilon,
\end{align}
\begin{align}\label{fff}
	 \forall h\in  \mathcal H \  \inf_{F_n\in V_{\mathcal F_n}} \|F_n-h\|<\delta.
\end{align}
	We write $\mathcal{H}=\{h_1, h_2, \dots, h_m \}$ and $\mathcal{F}_n=\{f_1, f_2, \dots, f_m \}$. 
By (\ref{fff}) there are polynomials  $(P_{i,j})_{1\le i,j\le m}$ in $\mathbb R[X]$ and a nonnegative  integer $p$ such that	
\begin{eqnarray}\label{ouf}\forall i=1,\cdots,m, \ \left\|h_i-\sum_{j=1}^m U^{-p} P_{i,j}(U)f_j\right\|<\delta\end{eqnarray}
Let $Q\in \mathbb R[X]$ be the polynomial given by the determinant of the matrix $M=(P_{i,j})_{1\le i,j\le m}\in M_m(\mathbb R[X])$. The spectrum $\mathrm{Sp}(U)$ of $U$ is contained in the unit circle. In particular for arbitrarily small $\lambda\in \mathbb R$, the polynomial $Q(\cdot+\lambda)$ does not vanish on $\mathrm{Sp}(U)$. Hence by replacing $P_{i,j}$ by $P_{i,j}(\cdot +\lambda)$ we may assume that $Q$ does not vanish on the spectrum of $U$. Then $Q(U)=\prod_{\lambda, \, Q(\lambda)=0}(U-\lambda \mathrm{Id})$ is invertible and its inverse may be approximated by polynomials in $U$  and $U^{-1}$, because for $\lambda$ with $Q(\lambda)=0$ we have $(U-\lambda \mathrm{Id})^{-1}=-\sum_{k\in \mathbb N}\frac{U^k}{\lambda^{k+1}}$ for $|\lambda|>1$ and $(U-\lambda \mathrm{Id})^{-1}=-\sum_{k\in \mathbb N}\frac{U^{-(k+1)}}{\lambda^{k}}$ for $|\lambda|<1$ (these sequences are normally convergent in $\mathcal L(B)$ as we assume $\|U\|=\|U^{-1}\|=1$).
Let $\mathcal H'=\{h'_1, h'_2, \dots, h'_m \}$ with $h'_i=\sum_{j=1}^m U^{-p}P_{i,j}(U)f_j$. We have \begin{align*}\mathcal F_n&= M(U)^{-1}U^p\mathcal H',\\
&=\mathrm{^{t}com} \, M(U)U^pQ(U)^{-1}\mathcal H'.
\end{align*}
Then from the above observations we get $$\mathcal F_n\subset V_{\mathcal H'}.$$ In particular  $V_{\mathcal F_n}\subset V_{\mathcal H'}$ and  it follows finally from (\ref{easy}) that $\mathcal H'\in O(\mathcal F,\epsilon)$, i.e. \begin{align*}
	 \forall f\in \mathcal{F}\  \inf_{H' \in V_{\mathcal H'}} \|H'-f\|<\epsilon.
\end{align*}

This completes the proof 
 as we have $d_{Hau}(\mathcal H,\mathcal H')<\delta$  by (\ref{ouf}), where $\mathcal H$ and $\delta$ have been chosen arbitrarily. 
\end{proof}
\end{proof}

\section{Cantor systems with finite topological rank}
Roughly speaking an ergodic measure preserving system is of finite rank $r$, when it may be obtained by cutting and staking with $r$ Kakutani-Rohlin towers. 
For ergodic systems, Baxter's lemma implies that the ergodic multiplicity is less than or equal to the rank. 
Topological rank as been defined and studied for minimal Cantor  systems (see e.g. \cite{donoso2021interplay} and the references therein).  For such systems we show now with Lemma \ref{cyclic} that the same inequality holds for the topological quantities : the topological multiplicity is less than or equal to the topological rank.

Firstly we recall the definition of topological rank. Let $(X,T)$ be a minimal Cantor system. A {\it Kakutani-Rohlin partition} of $X$ is given by
$$
\TT=\{T^{-j} B(k): 1\le k\le d, 0\le j<h(k) \}, 
$$
where $d, h(k), 1\le k\le d$ are positive integers and $B(k), 1\le k\le d$ are clopen subsets
of $X$ such that
$$
\cup_{k=1}^{d} T^{-h(k)}B(k)=\cup_{k=1}^{d} B(k).
$$
The {\it base} of $\TT$ is the set $B(\TT)=\cup_{k=1}^{d} B(k)$. A sequence of Kakutani-Rohlin partitions
$$
\TT_n=\{T^{-j} B_n(k): 1\le k\le d_n, 0\le j<h_n(k) \}, n\ge 1,
$$
is {\it nested} if 
\begin{enumerate}
	\item $\TT_0$ is the trivial partition, i.e. $d_0=1, h_0=1$ and $B_0(1)=X$.
	\item $B(\TT_{n+1})\subset B(\TT_n)$.
	\item $\TT_{n+1}\succ \TT_n$.
	\item $\sharp(\cap_{n\ge 0} B(\TT_n))=1$.
	\item $\cup_{n\ge 1} \TT_n$ spans the topology of $X$.
\end{enumerate}
Moreover, it is {\it primitive} if for all $n\ge 1$ there exists $N > n$ such that for all $1 \le k \le d_N$ and for each $x\in T^{-(hN (k)-1)}B_N(k)$,
$$
\{T^i(x): 0\le i\le h_N(k)-1   \} \cap B_n(j)\not=\emptyset, \forall 1\le j\le d_n.
$$
Following \cite{donoso2021interplay}, a minimal Cantor system is of {\it topological rank} $d$ if it admits a primitive sequence of nested Kakutani-Rohlin partitions with $d_n\le d$ for all $n\in \N$.

\begin{thm}\label{thm rank}
	Let $(X,T)$ be a minimal Cantor system with topological rank $d$. Then $\M(X,T)\le d$.
\end{thm}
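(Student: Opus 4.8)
The plan is to deduce the bound directly from the Banach-space version of Baxter's Lemma (Lemma \ref{cyclic}), applied to $B=C(X)$ and $U=U_T$. First I would record that $U_T$ is an invertible isometry of $C(X)$: since $T$ is a homeomorphism of the compact metric Cantor set $X$, composition by $T$ preserves the supremum norm and $U_T^{-1}$ is composition by $T^{-1}$, so $\|U_T\|=\|U_T^{-1}\|=1$; moreover $C(X)$ is separable. Thus all the hypotheses on the operator in Lemma \ref{cyclic} are met, and it remains to produce a sequence $(\mathcal F_n)_n$ of finite families with $\sharp \mathcal F_n\le d$ satisfying the approximation condition \eqref{baxtou}.

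The families will come from the bases of the given nested Kakutani-Rohlin partitions. For each $n$ I set
\[
\mathcal F_n=\{\mathbb{1}_{B_n(k)}:\ 1\le k\le d_n\}.
\]
Each $\mathbb{1}_{B_n(k)}$ is continuous because $B_n(k)$ is clopen, and $\sharp \mathcal F_n=d_n\le d$. The key computation is $\mathbb{1}_{B_n(k)}\circ T^{j}=\mathbb{1}_{T^{-j}B_n(k)}$ for every $j\in\Z$, so $V_{\mathcal F_n}$ contains the indicator of every atom $T^{-j}B_n(k)$, $0\le j<h_n(k)$, of $\TT_n$. Since finite linear combinations of these indicators are exactly the functions that are constant on the atoms of $\TT_n$, the space $V_{\mathcal F_n}$ contains every $\TT_n$-measurable function.

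Next I would verify \eqref{baxtou}. Fix $f\in C(X)$ and $\epsilon>0$. Because the partitions are nested, i.e. refining (condition (3)), and $\bigcup_n\TT_n$ spans the topology of $X$ (condition (5)), the maximal diameter of the atoms of $\TT_n$ tends to $0$; by uniform continuity of $f$ there is therefore an $n$ and a $\TT_n$-measurable function within $\epsilon$ of $f$ in the supremum norm. As such a function lies in $V_{\mathcal F_n}$, we obtain $\inf_{F\in V_{\mathcal F_n}}\|F-f\|_\infty\xrightarrow{n\to\infty}0$. Lemma \ref{cyclic} then furnishes a family $\mathcal F\subset C(X)$ with $\sharp\mathcal F\le \sup_n\sharp\mathcal F_n\le d$ and $V_{\mathcal F}=C(X)$, which is precisely $\M(T)\le d$.

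The main point requiring care is the density of the $\TT_n$-measurable functions, i.e. that the mesh of $\TT_n$ shrinks to $0$; this is where the nested structure (refinement together with generation of the topology) enters, via a short compactness argument, and it is the only structural input needed beyond the cardinality bound $d_n\le d$. In particular, primitivity of the sequence plays no role in this direction, and the availability of the Banach (rather than Hilbert) form of Baxter's Lemma is exactly what makes the argument go through on $C(X)$ with its supremum norm.
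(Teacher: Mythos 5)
Your proposal is correct and is essentially the paper's own proof: the authors take exactly the same families $\mathcal F_n=\{\chi_{B_n(k)}:1\le k\le d_n\}$, observe that condition (5) (the partitions $\bigcup_n\TT_n$ span the topology) yields the approximation hypothesis \eqref{baxtou}, and conclude by Lemma \ref{cyclic} with $d_n\le d$. Your writeup simply makes explicit the details the paper leaves implicit (that $U_T$ is an invertible isometry of the separable space $C(X)$, the identity $\chi_{B_n(k)}\circ T^j=\chi_{T^{-j}B_n(k)}$ giving all atoms of $\TT_n$, the compactness argument for shrinking mesh, and the observation that primitivity is not used), all of which are accurate.
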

\begin{proof}
	Let $(\TT_n)_{n\in \N}$ be the primitive sequence of nested Kakutani-Rohlin partitions with $d_n\le d$ for all $n\in \N$. Let 
	$$
	\mathcal{F}_n=\{\chi_{B_n(k)}: 1\le k\le d_n \}.
	$$
	Since $\cup_{n\ge 1} \TT_n$ spans the topology of $X$,  we have 
	$$\forall f\in C(X), \ \inf_{F_n\in V_{\mathcal F_n}}\|F_n-f\|\xrightarrow{n\rightarrow \infty} 0.$$
	It follows from Lemma \ref{cyclic} and $d_n\le d$ for all $n\in \N$ that $\M(X,T)\le d$.
\end{proof}

\begin{rem}
	It was shown in \cite{donoso2021interplay} that the Thue-Morse subshift has topological rank $3$. By Theorem \ref{thm rank},  Thue-Morse subshift has therefore topological multiplicity at most $3$. We give another direct proof of this fact in Subsection \ref{subs}.
\end{rem}

Examples of ergodic systems with rank $r$ and multiplicity $m$ have been built for any $1\leq m\leq r$  in \cite{Lacroix}. We then propose the following question.
\begin{ques}
Can one build for any $1\leq m\leq r$ a minimal Cantor system with topological multiplicity $m$ and topological rank $r$?
\end{ques}

\section{Examples of finite topological multiplicity} 
An invertible dynamical system is called {\it topological simple} or {\it have simple topological spectrum} if $\M(T)=1$.
\subsection{Minimal rotation on compact groups}
Let $G$ be a compact abelian group. Denote by $\hat G $ the dual group  of $G$ and by $\lambda$ the Haar measure on $G$. For $f\in C(G)$, we write $\hat{f}$ the Fourier transformation of $f$. 
\begin{prop}\label{trans}
	Any minimal translation $\tau$ on a compact abelian group $G$ is topologically simple.
\end{prop}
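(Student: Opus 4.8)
The plan is to exhibit a single real continuous function whose cyclic space is dense, exploiting the character theory of $G$ together with the fact that a minimal translation has simple point spectrum. Write $\tau(x)=g_0x$ with $\overline{\{g_0^n:n\in\Z\}}=G$. Since $G$ is compact metrizable, the dual $\hat G=\{\chi_0=\mathbb 1,\chi_1,\chi_2,\dots\}$ is countable, and by Stone--Weierstrass the characters span a dense subspace of the complex Banach space $C(G,\C)$. Each character is an eigenvector of $U_\tau$, namely $U_\tau\chi_j=\alpha_j\chi_j$ with $\alpha_j=\chi_j(g_0)\in\T$. Minimality of $\tau$ enters exactly here: it forces $\chi(g_0)\neq 1$ for every nontrivial $\chi$, and hence the eigenvalues $\alpha_j$ are pairwise distinct (if $\chi_i(g_0)=\chi_j(g_0)$ then $(\chi_i\overline{\chi_j})(g_0)=1$, so $\chi_i=\chi_j$). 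Thus $U_\tau$ has discrete spectrum with simple eigenvalues, which is the structural input that makes cyclicity plausible.

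First I would reduce to the complex setting: it suffices to produce a real $f$ that is cyclic over $\C$, i.e. $\overline{\mathrm{span}}_\C\{U_\tau^nf:n\in\Z\}=C(G,\C)$. Indeed, if $g$ is real and $\sum_nc_nU_\tau^nf$ approximates it within $\epsilon$ in sup-norm, then, since $f$ and hence each $U_\tau^nf$ is real, taking real parts gives $\sum_n\mathrm{Re}(c_n)U_\tau^nf\in\mathrm{span}_\R\{U_\tau^nf\}$ with error $\sup_x|\mathrm{Re}(g(x)-\sum_nc_nU_\tau^nf(x))|\le\epsilon$, so complex density of the cyclic space yields real density. I would then fix coefficients $c_j\neq 0$ with $\sum_j|c_j|<\infty$, chosen conjugate-symmetrically so that $f:=\sum_jc_j\chi_j$ is real and continuous, and take $f$ as the candidate cyclic vector.

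The core step is to recover each $\chi_{j_0}$ from the cyclic space. For a trigonometric (Laurent) polynomial $P(z)=\sum_{|n|\le N}b_nz^n$ one has $P(U_\tau)f=\sum_jc_jP(\alpha_j)\chi_j\in\mathrm{span}\{U_\tau^nf\}$, and $\|\sum_jd_j\chi_j\|_\infty\le\sum_j|d_j|$. Given $\epsilon>0$, choose $N$ with $\sum_{j>N}|c_j|$ small; because the finitely many points $\alpha_0,\dots,\alpha_N$ are distinct on $\T$, there is a continuous $\phi$ on $\T$ with $\|\phi\|_\infty=1/|c_{j_0}|$, $\phi(\alpha_{j_0})=1/c_{j_0}$, and $\phi(\alpha_j)=0$ for $j\le N$, $j\neq j_0$. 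Then $\sum_jc_j\phi(\alpha_j)\chi_j$ differs from $\chi_{j_0}$ by at most $|c_{j_0}|^{-1}\sum_{j>N}|c_j|$ in sup-norm, and approximating $\phi$ uniformly on $\T$ by a Laurent polynomial $P$ (Fej\'er/Stone--Weierstrass) makes $P(U_\tau)f$ within $\|P-\phi\|_\infty\sum_j|c_j|$ of $\sum_jc_j\phi(\alpha_j)\chi_j$. Hence $\chi_{j_0}\in\overline{\mathrm{span}}_\C\{U_\tau^nf\}$ for every $j_0$, and since the characters are dense, $f$ is cyclic.

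I expect the main obstacle to be the uniform sup-norm control over the infinitely many eigenvalues at once; the crucial point making it work is the distinctness of the $\alpha_j$ coming from minimality, and the interpolation-plus-tail estimate above is precisely what converts "simple point spectrum" into a genuine $C(G)$-cyclic vector. An alternative, perhaps shorter, route would bypass this estimate by applying the generalized Baxter Lemma (Lemma \ref{cyclic}) to the isometry $U_\tau$ with singletons $\mathcal F_n=\{f_n\}$, where $f_n$ is the real function built from the first $n$ conjugate pairs of characters: since each $f_n$ involves only finitely many distinct eigenvalues, a Vandermonde argument shows $V_{\{f_n\}}$ equals the span of those characters, which eventually contains any prescribed real part of a character, verifying hypothesis (\ref{baxtou}) on a spanning set and giving $\M(\tau)\le\sup_n\sharp\mathcal F_n=1$. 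Either way $\M(\tau)=1$, since $C(G)\neq\{0\}$ forces $\M(\tau)\ge 1$.
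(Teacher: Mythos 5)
Your proof is correct, but it reaches cyclicity by a genuinely different mechanism than the paper. The paper fixes an arbitrary $f\in C(G)$ with $\hat f(\chi)\neq 0$ for all $\chi\in\hat G$ and recovers each character from the convolution identity $\hat f(\chi)\chi=\chi*f=\int f(\cdot-y)\chi(y)\,d\lambda(y)$: the integral is approximated uniformly by Riemann sums $\sum_k f(\cdot-y_k)\chi(y_k)$ using uniform continuity of $f$, and each translate $f(\cdot-y_k)$ is then replaced by some $f\circ\tau^{l_k}$ --- this is where minimality enters, as density of the orbit of the rotating element in $G$. You instead exploit the spectral structure: characters are eigenvectors $U_\tau\chi=\chi(g_0)\chi$, minimality enters as injectivity of $\chi\mapsto\chi(g_0)$ (pairwise distinct eigenvalues), and you filter a single character out of an $\ell^1$ eigenexpansion $f=\sum_j c_j\chi_j$ by interpolating with a continuous $\phi$ on $\T$ and approximating $\phi$ by Laurent polynomials, with the tail controlled by $|c_{j_0}|^{-1}\sum_{j>N}|c_j|$. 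All steps check out: the termwise identity $P(U_\tau)f=\sum_j c_jP(\alpha_j)\chi_j$ is licit because the series converges absolutely in sup norm and $P(U_\tau)$ is bounded; the countability of $\hat G$ follows from metrizability (standing assumption of the paper); and your explicit real-versus-complex reduction via real parts of coefficients settles a point the paper leaves implicit. The trade-off between the two routes: the paper's argument certifies cyclicity of \emph{every} $f$ with nonvanishing Fourier coefficients, while yours only produces cyclic vectors in the Wiener class $\sum_j|c_j|<\infty$ --- amply sufficient for the proposition --- but in exchange your argument is purely operator-theoretic and transfers verbatim to any isometry of a function space admitting a dense family of unimodular eigenfunctions with pairwise distinct eigenvalues, so it isolates \emph{simple discrete topological spectrum} as the effective hypothesis, whereas the paper's proof is tied to the group convolution structure. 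Your Baxter-lemma alternative via $\mathcal F_n=\{f_n\}$ and Vandermonde inversion is also sound (the remark following Lemma \ref{cyclic} allows checking (\ref{baxtou}) only on the dense real span of real and imaginary parts of characters), and is closer in spirit to how the paper later treats subshifts, though for this proposition it is heavier machinery than your direct estimate.
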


\begin{proof}
	We claim that  any $f\in C(G)$ with $\hat f(\chi)\neq 0$ for all $\chi\in \hat G$ is cyclic, i.e. the  vector space  spanned by 
	$f\circ \tau ^k$, $k\in \mathbb N$ is dense in $C(X)$. As characters of a compact abelian group separates points it is enough to show by Stone-Weierstrass theorem that any character belongs to the complex vector space spanned by $f\circ \tau ^k$, $k\in \mathbb N$. But for all $\chi\in \hat G $ we have 
	$$\hat f(\chi) \chi=\chi * f=\int f(\cdot -y)\chi(y) \, d\lambda(y).$$
	Then the function $f$ being uniformly continuous, there are functions of the form $\sum_k f(\cdot-y_k)\chi(y_k)$ arbitrarily close to $\hat f(\chi) \chi$ for the supremum norm.  
	By   minimality of $\tau$, there are integers $l_k\in \mathbb N$ such that $f\circ \tau^{l_k}$ and $f(\cdot-y_k)$ are arbitrarily closed. It concludes the proof.
\end{proof}

\subsection{Sturmian subshift}
 A word $u \in \{0, 1\}^{\mathbb Z}$ is called {\it Sturmian} if it is recurrent under the shift $\sigma$, and the number of $n$-words in $u$ equals $ n + 1$ for each $n \geq  1$. Take the shift-orbit closure $X_u =\overline{ O_\sigma(u)}$. The corresponding subshift $(X_u,\sigma)$ is called a {\it Sturmian subshift}. Sturmian sequences are symbolic representation  of circle irrational rotations.
 
We first recall some standard notations in symbolic dynamics.  For a subset $Y$ of $\mathcal A^{\mathbb Z}$ with $\mathcal A$ being  a finite alphabet  we let $\mathcal L_n(Y)$ be the number of $n$-words appearing in the sequences of $Y$. Then for 
$w\in \mathcal L_n(Y)$ we  let $[w]$ be the associated cylinder defined as 
$[w]:=\{(x_n)_{n\in \Z}\in Y: \ x_0\cdots x_{n-1}=w\}.$  The indicator function of a subset $E$ of $X$ will be denoted by $\chi_E$.
 
\begin{prop}\label{Strurmian}
	Any Sturmian subshift has simple topological spectrum.
\end{prop}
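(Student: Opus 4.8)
The plan is to show that a Sturmian subshift is topologically simple by exhibiting a single cyclic function, leveraging the Kakutani--Rohlin machinery established in Section 5. A Sturmian subshift is a minimal Cantor system, and it is well known to have topological rank $2$ (indeed, Sturmian sequences arise from the continued fraction / $S$-adic structure of irrational rotations, giving nested Kakutani--Rohlin partitions with $d_n \le 2$). By Theorem \ref{thm rank} this already yields $\M(X_u,\sigma) \le 2$, so the content of the proposition is the reverse bound, namely that one function suffices rather than two.

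First I would fix the natural candidate generating function. The Sturmian subshift is uniquely ergodic, so by Proposition \ref{mu} (or the remark following it) we have $\M(\sigma) = \M(\underline{U}_\sigma)$, which reduces the problem to producing a single $f$ whose cyclic space $V_{\{f\}}$ is all of $C(X_u)$. The indicator $\chi_{[0]}$ of the cylinder on the symbol $0$ is the obvious choice, since its translates $\chi_{[0]}\circ \sigma^{-k} = \chi_{\sigma^k[0]}$ encode the occurrences of $0$ along orbits. The key combinatorial feature of Sturmian words to exploit is that, because $p_{X_u}(n) = n+1$, the cylinders $[w]$ for $w \in \mathcal{L}_n(X_u)$ can be built from finite Boolean combinations (differences and unions) of the translates $\sigma^j[0]$; more precisely, the partition of $X_u$ into $n$-cylinders is refined by the dynamical partition generated by the $\{\sigma^j[0] : 0 \le j < n\}$, and the linear-complexity bound forces these to essentially coincide up to the low-dimensional ambiguity that rank $2$ measures.

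The cleanest route is to apply Lemma \ref{cyclic} directly with the single-element families $\mathcal{F}_n = \{\chi_{B_n(1)}\}$ drawn from a rank-one-like refinement, or to argue by hand that every cylinder indicator lies in $\overline{\mathrm{span}}\{\chi_{[0]}\circ \sigma^k : k \in \Z\}$. For the latter, I would use the three-distance / substitution structure: the return words to $[0]$ take exactly two lengths, and by taking suitable telescoping differences $\chi_{\sigma^{k}[0]} - \chi_{\sigma^{k'}[0]}$ one isolates the indicators of the level sets of the return-time function, from which all cylinders $[0^{a}1]$, $[1]$, and inductively all longer words are recovered. Since indicators of cylinders span a dense subspace of $C(X_u)$ (the cylinders form a clopen basis of the Cantor set $X_u$), establishing $\chi_{[w]} \in V_{\{\chi_{[0]}\}}$ for all words $w$ gives $V_{\{\chi_{[0]}\}} = C(X_u)$ and hence $\M(\sigma)=1$.

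\emph{The hard part} will be ruling out the possibility that the rank-$2$ obstruction genuinely costs a second generator; that is, showing the two-to-one ambiguity in the Kakutani--Rohlin towers does not survive as an obstruction to cyclicity. Concretely, the delicate step is verifying that the difference of the two return lengths is accessible within the single cyclic space, so that one can separate the two ``columns'' of the tower using translates of just $\chi_{[0]}$ rather than needing a second indicator. I expect this to follow from unique ergodicity together with the specific arithmetic of Sturmian return words (the renormalization never introduces a new symbol whose indicator is independent of $\chi_{[0]}$ and its translates), but it is exactly the point where linear complexity $n+1$, as opposed to a larger linear bound, must be used, and where the argument genuinely improves on the bound $\M \le 2$ coming from rank alone.
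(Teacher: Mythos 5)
Your proposal correctly identifies the right candidate generator (up to an affine change, $\chi_{[0]}=\tfrac12(\mathbb 1+f)$ with $f(x)=(-1)^{x_0}$ is exactly the function the paper uses) and the right target (showing every cylinder indicator lies in $V_{\{\chi_{[0]}\}}$, which suffices since cylinders form a clopen basis). But the pivotal step is missing, and the mechanism you sketch for it does not work as stated. You say the $n$-cylinders ``can be built from finite Boolean combinations (differences and unions) of the translates $\sigma^j[0]$'': Boolean operations on non-nested sets correspond to \emph{products} of indicators, not linear combinations, and $V_{\{\chi_{[0]}\}}$ is only a closed linear span, not an algebra (one cannot invoke Stone--Weierstrass here, since $U_T$-cyclicity concerns the linear span of translates). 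Your fallback via return words and ``telescoping differences'' is left entirely unverified --- you explicitly defer it (``I expect this to follow\dots'') --- and it is precisely the content of the proposition, so the proof has a genuine gap at its core. The opening reduction via topological rank $2$ and Theorem \ref{thm rank}, and via Proposition \ref{mu}, is correct but does no work toward the bound $\M(\sigma)\le 1$.

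The paper closes this gap with a short linear-algebra count that you never formulate, and it is worth seeing how clean it is. Let $F_n=\mathrm{span}\{\chi_{[w]}:w\in\mathcal L_n(u)\}$, so $\dim F_n=p_{X_u}(n)=n+1$, and let $G_n=\mathrm{span}\{\mathbb 1,\,f\circ\sigma^k:0\le k\le n-1\}\subseteq F_n$. One checks $\{\mathbb 1,\,f\circ\sigma^k\}$ are linearly independent: a relation $a_0(-1)^{x_0}+\cdots+a_{n-1}(-1)^{x_{n-1}}+a_n=0$ on $X_u$ is killed coordinate by coordinate because $p_{X_u}(j+1)>p_{X_u}(j)$ guarantees, for each $j\le n-1$, two points agreeing up to coordinate $j-1$ and differing at $j$; hence $\dim G_n=n+1=\dim F_n$ and $G_n=F_n$. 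This is exactly where complexity $n+1$ enters: the span of $\mathbb 1$ and $n$ translates of a single function has dimension at most $n+1$, and Sturmian complexity makes the count match $\dim F_n$ \emph{exactly} --- your intuition that ``linear complexity forces these to essentially coincide'' is right, but only this equality of dimensions proves it. One then concludes with Lemma \ref{cyclic} (or directly: $\int f\,d\nu=1-2\alpha\neq 0$, so unique ergodicity puts $\mathbb 1$ in $V_{\{f\}}$ via uniform Birkhoff averages, whence $F_n\subseteq V_{\{f\}}$ for all $n$). Your worry about a surviving ``rank-$2$ obstruction'' and separating tower columns evaporates in this argument; no return-word arithmetic is needed.
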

\begin{proof}
	Let $u$ be a Sturmian sequence. Let
	$$
	F_n=\text{span}\{\chi_{[w]}: w\in\mathcal{L}_n(u)\}.
	$$
	It follows that
	$$
	C(X_u)=V_{\cup_n F_n}.
	$$
	Notice $\dim(F_n)=\sharp \mathcal{L}_n(u) =n+1$. We let $f:X_u\rightarrow \mathbb R$ be the continuous function defined as   $f:x=(x_n)_n\mapsto(-1)^{x_0}$. Let
	$$
	G_n=\text{span}\{\mathbb{1}, f\circ \sigma^k: 0\le k \le n-1 \}.
	$$
	Clearly, $G_n\subset F_n$. To prove that $(X_u, \sigma)$ has simple topological spectrum, it is sufficient to show $\dim(G_n)=n+1$. Thus it is enough to show the functions $\{ \R \mathbb 1, f\circ \sigma^k: 0\le k \le n-1\}$ are linearly  independent. If not, for some $n$ there exists a nonzero vector $(a_0, a_1, \dots, a_n)$ such that
	$$
	a_0(-1)^{x_0}+a_1(-1)^{x_1}+\dots+a_{n-1}(-1)^{x_{n-1}}+a_n=0, \forall x\in X_u.
	$$
	Since $\sharp \mathcal L_{n-1}(u)> \sharp \mathcal L_{n-2}(u)$, we can find distinct $x, x'\in X_u$ such that $x|_0^{n-2}=x'|_0^{n-2}$ but $x_{n-1}\not=x'_{n-1}$. It follows that $a_{n-1}=0$. Since for each $0\le k\le n-2$ we can always find $y, y'$ such that $y|_0^{k-1}=y'|_0^{k-1}$ but $y_{k}\not=y'_{k}$, we obtain that $a_{n-2}=a_{n-3}=\dots=a_{0}=0$. Finally, we get $a_n=0$. This is a contradiction. Therefore we conclude that $\dim(G_n)=n+1$, then $F_n=G_n$. Then by Lemma \ref{cyclic} and Lemma \ref{lem:-1}, $(X_u, \sigma)$ has simple topological spectrum.
\end{proof}

\subsection{Homeomorphism of the interval}

Estimating the multiplicity of non-zero dimensional systems is  difficult in general. Below we focus on homeomorphisms of the interval (see \cite{jav} for related results on the circle). 
We use the following result due to Atzmon and Olevskii \cite{atz}. We denote by $C_0(\mathbb R)$ the set of continuous map  on $\mathbb R$ with zero limits in $\pm \infty$. For $f\in 
C_0(\mathbb R)$ and $n\in \mathbb Z$ we let $f_n=f(\cdot+n)$ be the translation of $f$ by $n$.
\begin{thm}\label{atzz}\cite{atz}
There exists $g\in C_0(\mathbb R)$ such that the vector space spanned by $g_n$, $n\in \mathbb N$ is dense in $ C_0(\mathbb R)$. 
\end{thm}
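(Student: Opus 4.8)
The plan is to pass to the dual and reformulate completeness as a uniqueness statement for measures. By the Riesz representation theorem the dual of $C_0(\mathbb{R})$ is the space $M(\mathbb{R})$ of finite signed Borel measures, and by Hahn--Banach the span of $\{g_n:n\in\mathbb{N}\}$ is dense in $C_0(\mathbb{R})$ if and only if the only $\mu\in M(\mathbb{R})$ annihilating every $g_n$ is $\mu=0$. Writing $\nu$ for the reflection of $\mu$ (so $\nu(A)=\mu(-A)$), the annihilation condition $\int g(x+n)\,d\mu(x)=0$ for all $n\in\mathbb{N}$ becomes $(g*\nu)(n)=0$ for all $n\in\mathbb{N}$, and since $g\in C_0(\mathbb{R})$ and $\nu$ is finite, $\phi:=g*\nu$ again lies in $C_0(\mathbb{R})$. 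Thus everything reduces to producing a single $g$ for which
$$\phi=g*\nu,\qquad \phi(n)=0\ \ \forall\, n\in\mathbb{N}\ \Longrightarrow\ \nu=0.$$

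Next I would design $g$ through its Fourier transform, where $\widehat{\phi}=\widehat{g}\,\widehat{\nu}$ and $\widehat{\nu}$ is a bounded uniformly continuous function. To see where the real difficulty lies, consider first the heuristic stronger hypothesis $\phi(n)=0$ for all $n\in\mathbb{Z}$ together with $g$ band-limited, with $\widehat{g}$ smooth, nonvanishing on the interior of its support, and supported (for the appropriate normalization of $\widehat{\cdot}$) in an interval of length less than $2\pi$: Poisson summation then forces the periodization of $\widehat{\phi}$, hence $\widehat{\phi}$ itself, to vanish, so $\widehat{\nu}=0$ on the support of $\widehat{g}$. This only annihilates $\widehat{\nu}$ on a frequency interval, and the Fourier--Stieltjes transform of a finite measure can vanish on an interval without vanishing identically, so local information is insufficient. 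The heart of the matter, following Atzmon--Olevskii, is therefore to spread the spectrum of $g$ over a carefully chosen (lacunary or suitably perturbed) frequency set so that the sampled vanishing propagates to all of $\widehat{\nu}$; concretely one wants the chosen spectrum to be a set of uniqueness for Fourier--Stieltjes transforms sampled along $\mathbb{N}$, which then yields $\widehat{\nu}\equiv0$, i.e.\ $\nu=0$.

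The main obstacle is precisely this uniqueness step, and it is where the one-sidedness $n\in\mathbb{N}$ (rather than $n\in\mathbb{Z}$) must be genuinely exploited rather than merely tolerated: restricting to a half-lattice discards half the linear conditions, while $C_0(\mathbb{R})$ is non-reflexive and annihilating measures may be singular, so neither a clean Poisson-summation argument nor an analyticity or quasi-analyticity shortcut applies directly. I expect the essential work to be an explicit harmonic-analytic construction of $g$, balancing the decay of $g$ (needed to keep $g\in C_0(\mathbb{R})$) against the spread and separation of its spectrum, together with a Beurling-type uniqueness theorem (or a gap/lacunary series argument) guaranteeing that a bounded function whose spectrum lies in the prescribed set and which vanishes on $\mathbb{N}$ must vanish identically. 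Verifying that one \emph{fixed} $g$ defeats \emph{all} annihilating measures simultaneously, rather than one measure at a time, is the crux of the theorem.
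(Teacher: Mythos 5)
Your duality reduction is correct and is exactly the first step of the argument the paper attributes to Atzmon--Olevskii: by the Riesz representation theorem and Hahn--Banach, density of $\mathrm{span}\{g_n : n\in\mathbb{N}\}$ in $C_0(\mathbb{R})$ is equivalent to the assertion that the only finite (signed or complex) measure $\mu$ with $\int g(\cdot+n)\,d\mu=0$ for all $n$ is $\mu=0$. But from that point on your proposal stops precisely where the content of the theorem lies. The statement is an existence theorem, and you exhibit no candidate $g$: you say you ``expect'' an explicit harmonic-analytic construction with lacunary or perturbed spectrum together with a Beurling-type uniqueness theorem, but neither object is produced, no property of the spectrum is formulated in a checkable way, and no uniqueness lemma is stated, let alone proved. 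That is a genuine gap rather than a compressed step, and moreover the mechanism you guess at (lacunarity, gap series, quasi-analyticity) is not the one that works here.

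The actual construction is more elementary and of a different nature. One takes a measurable set $S\subset\mathbb{R}$ of finite measure --- a \emph{set of uniqueness} --- such that the sets $S_n=(S+2\pi n)\cap[-\pi,\pi]$, $n\in\mathbb{Z}$, are pairwise disjoint while each $S_n$ meets every open subset of $[-\pi,\pi]$ in positive Lebesgue measure; one then sets $g=\widehat{\chi_S}$, which lies in $C_0(\mathbb{R})$ by the Riemann--Lebesgue lemma. If $\mu$ annihilates the translates of $g$, then by Plancherel--Parseval the numbers $\int \chi_S(t)\hat{\mu}(t)e^{-int}\,dt$ vanish, and these are the Fourier coefficients of the $2\pi$-periodization $h=\sum_{m\in\mathbb{Z}}(\chi_S\hat{\mu})(\cdot+2\pi m)\in L^1([-\pi,\pi])$. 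In the two-sided case $h=0$ outright; in the one-sided case $n\in\mathbb{N}$ of the statement, $h$ lies in a Hardy class and the F.~and M.~Riesz circle of results forces $h=0$ as well --- this single classical input is the only price of the half-lattice, which otherwise needs no structural ``exploitation.'' Disjointness of the $S_n$ then kills each term $(\chi_S\hat{\mu})(\cdot+2\pi m)$ separately, so the \emph{continuous} function $\hat{\mu}$ vanishes on a set meeting every interval in positive measure, whence $\hat{\mu}\equiv 0$ and $\mu=0$. Note that this also dissolves your closing worry: a single fixed $S$ defeats all annihilating measures simultaneously, with no per-measure tuning. Your Poisson-summation heuristic correctly diagnoses why an interval spectrum fails, but the remedy is the everywhere-positive-measure interleaving of the translates $S_n$ inside $[-\pi,\pi]$, not spectral lacunarity.
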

In particular the operator $V: C_0(\mathbb R)\circlearrowleft$, $f\mapsto f(\cdot+1) $, is cyclic.  A Borel set $S$ of $\mathbb R$ is called a {\it set of uniqueness} if 
the sets $S_n:=(S+2\pi n)\cap [-\pi,\pi]$, $n\in \mathbb Z$ satisfy the following properties:
\begin{enumerate}
\item $S_n$, $n\in \mathbb Z$, are pairwise disjoint, 
\item  $\mathrm{Leb}(S_n\cap U)>0$ for any $n\in \Z$ and any open set $U$ of $[-\pi, \pi]$,
\item   $\mathrm{Leb}(S)<\infty$,
\end{enumerate}
where $\mathrm{Leb}$ denotes the Lebesgue measure on $\R$.

 Atzmon and Olevskii proved for any set of uniqueness $S$  (such sets exist!)  the conclusion of Theorem \ref{atzz} holds true with $g$ being the 
 the  Fourier transform of the indicator function of $S$.  Let us just remark that if $S$ is a set of uniqueness  then $$S^l=\bigcup_{n\in \mathbb Z} (S_{nk+l}+2\pi k), 1\leq l\leq k,$$ are $k$ disjoints sets of  uniqueness. Let $C_0(\mathbb R;\mathbb C)$ be the set of continuous map  on $\mathbb C$ with zero limits in infinity.
\begin{lem}\label{Atzmon}
The operator  $V: C_0(\mathbb R;\mathbb C)^{k}\circlearrowleft$, $(f_i)_{1\leq i\leq k}\mapsto \left(f_i(\cdot+1)\right)_{1\leq i\leq k}$ is cyclic. In particular, the operator  $U: C_0(\mathbb R)^{k}\circlearrowleft$, $(f_i)_{1\leq i\leq k}\mapsto \left(f_i(\cdot+1)\right)_{1\leq i\leq k}$ is cyclic.
\end{lem}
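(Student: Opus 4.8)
The plan is to exhibit an explicit cyclic vector for the product operator, reducing the $k$-fold case to $k$ applications of Theorem \ref{atzz} that are glued together by one density statement on the circle. I will treat the complex space $C_0(\R;\C)^k$ first; the real case follows by the same argument (see the last paragraph). Recall that, by definition of multiplicity, cyclicity of $V$ means that some single $\mathbf g\in C_0(\R;\C)^k$ has dense two-sided orbit span $\mathrm{span}\{V^n\mathbf g:n\in\Z\}$. Fix a set of uniqueness $S$ and the associated $k$ disjoint sets of uniqueness $S^1,\dots,S^k$ obtained, as above, by collecting the blocks $S_j$ of $S$ with $j\equiv l \pmod k$. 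I take as candidate cyclic vector $\mathbf g=(g_1,\dots,g_k)$ with $g_l=\widehat{\chi_{S^l}}$. Since translation by $1$ on $C_0(\R;\C)$ amounts to multiplying the symbol by $e^{-i\xi}$, for any trigonometric polynomial $p(\xi)=\sum_n c_n e^{-in\xi}$ one has $\sum_n c_n V^n g_l=\widehat{p\,\chi_{S^l}}$, so
$$\mathrm{span}\{V^n\mathbf g:n\in\Z\}=\Big\{\big(\widehat{p\,\chi_{S^1}},\dots,\widehat{p\,\chi_{S^k}}\big): p \text{ trigonometric polynomial}\Big\},$$
where the \emph{same} $p$ appears in every coordinate. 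Everything thus reduces to showing these tuples are dense in $C_0(\R;\C)^k$.

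Given a target $(\phi_1,\dots,\phi_k)$ and $\epsilon>0$, I would proceed in two steps. First, by Theorem \ref{atzz} applied to each set of uniqueness $S^l$ (whose conclusion gives density of $\{\widehat{q\chi_{S^l}}:q \text{ trigonometric polynomial}\}$ in $C_0(\R;\C)$), choose trigonometric polynomials $q_1,\dots,q_k$ with $\|\widehat{q_l\chi_{S^l}}-\phi_l\|_\infty<\epsilon/2$. Second, I merge the $q_l$ into a single $p$. The key structural input is that reduction modulo $2\pi$ maps $S^l$ bijectively and measure-preservingly onto $T^l:=\bigcup_{j\equiv l}S_j\subset[-\pi,\pi]$ (injectivity is the disjointness clause in $S^l$ being a set of uniqueness), and that the $T^l$ are pairwise disjoint — this is precisely why the blocks were grouped by residue. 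Hence I may define $Q\in L^\infty([-\pi,\pi])$ unambiguously by $Q=q_l$ on $T^l$ and $Q=0$ elsewhere, and, since trigonometric polynomials are dense in $L^1(\T)$, pick a trigonometric polynomial $p$ with $\|p-Q\|_{L^1(\T)}<\epsilon/2$.

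Finally I would estimate, using $\|\widehat h\|_\infty\le\|h\|_{L^1}$ together with the periodicity of $p,q_l$ and the measure-preserving folding $S^l\to T^l$,
$$\big\|\widehat{p\chi_{S^l}}-\widehat{q_l\chi_{S^l}}\big\|_\infty\le\int_{S^l}|p-q_l|=\int_{T^l}|p-q_l|\le\int_{\T}|p-Q|<\epsilon/2,$$
so $\|\widehat{p\chi_{S^l}}-\phi_l\|_\infty<\epsilon$ for every $l$ simultaneously; this proves density, hence cyclicity of $V$. For the real statement one runs the identical argument with a symmetric set of uniqueness $S=-S$, so that each $g_l=\widehat{\chi_{S^l}}$ is real-valued and real-coefficient combinations of the $V^n g_l$ remain in $C_0(\R)^k$, now invoking density of the real trigonometric polynomials in $L^1(\T,\R)$.

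The main obstacle, and the whole point of the construction, is the merging step: a single symbol $p$ is imposed on all $k$ coordinates at once, so one cannot simply approximate the coordinates independently. What rescues the argument is that the $S^l$ occupy disjoint residue-blocks, whence their reductions $T^l$ are disjoint arcs-worth of the circle, allowing one periodic $p$ to realize $k$ independent targets. I would emphasize that this forces the use of the full two-sided orbit — trigonometric polynomials, which are dense in $L^1(\T)$, as opposed to the analytic (one-sided) ones, whose $L^1$-closure is only $H^1$ — in agreement with cyclicity being defined through $n\in\Z$.
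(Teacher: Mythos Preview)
Your direct-density argument for the complex case is correct and takes a genuinely different route from the paper. The paper argues by duality: assuming a tuple of complex measures $\mu=(\mu_l)_l$ annihilates every $V^n\mathbf g$, it rewrites $\sum_l\langle V^ng_l,\mu_l\rangle$ via Parseval as the $n$-th Fourier coefficient of the $L^1([-\pi,\pi])$ function $\sum_m\bigl(\sum_l\chi_{S^l}\hat\mu_l\bigr)(\cdot+2\pi m)$, and then uses disjointness together with the positive-measure property of sets of uniqueness to force each $\hat\mu_l=0$. Your approach is constructive: you build the approximant explicitly by merging symbols on the disjoint folded sets $T^l$. This makes more transparent \emph{why} the residue-class splitting of $S$ is the right move --- precisely so that the $T^l$ are disjoint and a single periodic $p$ can carry $k$ independent targets --- whereas the duality argument uses this structure only implicitly.

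There is, however, a gap in your real case. The claim that $S=-S$ makes each $S^l$ symmetric is false for the residue-class construction: from $S=-S$ one gets $-S_m=S_{-m}$, hence $-S^l=S^{-l}$, which equals $S^l$ only when $2l\equiv 0\pmod k$. Thus $g_l=\widehat{\chi_{S^l}}$ need not be real-valued and your vector $\mathbf g$ does not land in $C_0(\mathbb R)^k$. The repair is easy and keeps your argument intact: instead of deriving the $S^l$ from one symmetric $S$ via residues, choose $k$ pairwise disjoint \emph{symmetric} sets of uniqueness directly (for instance split each block $S_n$ of a symmetric $S$ into $k$ pieces $S_n^1,\dots,S_n^k$, each with positive measure on every open set, arranged so that $-S_n^j=S_{-n}^j$, and put $S^j=\bigsqcup_n(S_n^j-2\pi n)$). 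Then every $g_l$ is real, the $T^l$ are symmetric, the patched symbol $Q$ is Hermitian, and approximation by real-coefficient trigonometric polynomials in $L^1(\mathbb T)$ finishes the argument exactly as you outlined.
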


\begin{proof}
Let $S$, $S^l$, $1\leq l\leq k$,  be sets of uniqueness as above. By following \cite{atz} we show that the vector space generated by the translates of 
 $g:=(\widehat{\chi_{S^l}})_{1\leq l\leq k}$  is dense in $C_0(\mathbb R;\mathbb C)^{k}$ with $\widehat{\chi_{S^l}} 
 $ be the Fourier transform of the indicator function $\chi_{S^l}$ of $S^l$. It follows that the translates of 
 $Re(g)$  is dense in $C_0(\mathbb R)^{k}$. Let $\mu=(\mu_l)_{1\leq l\leq k}$ 
 be a complex bounded measure with $$\langle V^n(g), \mu \rangle=\sum_{1\leq l\leq k} \int (\widehat{\chi_{S^l}})_n d\mu_l=0,$$ for all $n\in 
 \mathbb Z$. It is enough to prove $\mu_l=0$ for all $1\leq l\leq k$. By Plancherel-Parseval formula we have $$\int 
 (\widehat{\chi_{S^l}})_n d\mu_l= \int \widehat{(\widehat{\chi_{S^l}})_n}(t)\hat{\mu_l}(-t)  \,dt= -
 \int \chi_{S^l}(t) e^{-int}\hat{\mu_l}(t) \,dt.$$ Therefore we have
 $$\sum_{1\leq l\leq k} \int \chi_{S^l}(t) \hat{\mu_l}(t)e^{-int}\, dt=0,$$ for all $n$. But this term is just the 
 $n^{th}$ coefficient of the function of $L^1([-\pi,\pi])$ given by $\sum_{m\in \mathbb Z}(\sum_{1\leq l\leq k}
 \chi_{S^l}\hat \mu_l)(\cdot +2\pi m) $, which should therefore be $0$.  As the sets $(S^l+2\pi m)\cap [-
 \pi,\pi], m\in \Z,$ are pairwise disjoint, each term of the previous sum should be zero ;  that is  $(\chi_{S^l}\hat \mu_l)(x +2\pi k)=0$ for all $m,l$ and for Lebesgue almost every $x\in [-\pi, \pi]$.  By Property (2) in the definition 
 of a set of uniqueness, we conclude $\widehat{\mu_l}=0$. Therefore $\mu_l=0$ for each $1\leq l\leq k$ and consequently the translates of 
 $g$  is dense in $C_0(\mathbb R;\mathbb C)^{k}$.  
\end{proof}

\begin{prop}
Let $f:[0,1]\circlearrowleft$ be a  homeomorphism of the interval. Then 
$$\M(f)=\sharp \mathcal M_e([0,1],f).$$
\end{prop}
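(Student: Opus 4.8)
The plan is to prove the two inequalities $\M(f)\geq \sharp\MM_e([0,1],f)$ and $\M(f)\leq \sharp\MM_e([0,1],f)$ separately. The first is immediate: it is exactly Corollary \ref{err}, which holds for any topological system. So the entire content is the reverse inequality, and the strategy will be to exhibit a generating family of cardinality $\sharp\MM_e([0,1],f)$, using the cyclicity result of Lemma \ref{Atzmon} together with the structure theory of interval homeomorphisms.

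The first real step is to understand the dynamics. A homeomorphism $f$ of $[0,1]$ must fix the endpoints or swap them; in either case the set $\mathrm{Fix}(f)$ (or $\mathrm{Fix}(f^2)$) is a nonempty closed set whose complement is a countable union of open intervals $(a_j,b_j)$, on each of which $f$ is monotone and moves every point strictly toward one endpoint. Thus every orbit converges to fixed points in both forward and backward time, and the ergodic invariant measures are precisely the Dirac masses at fixed points. Consequently $\sharp\MM_e([0,1],f)=\sharp\mathrm{Fix}(f)=:p$ (assuming $f$ orientation-preserving; the orientation-reversing case is handled by passing to $f^2$). The geometric picture to carry through the argument is that the fixed points partition $[0,1]$ into finitely or countably many ``wandering intervals'' on which the dynamics looks like translation by $1$ after a logarithmic change of coordinates.

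The heart of the proof is to transport Lemma \ref{Atzmon} onto $[0,1]$. On each maximal open interval $I=(a,b)$ between consecutive fixed points, choose a homeomorphism $h_I:I\to\R$ conjugating $f|_I$ to the translation $x\mapsto x+1$; such a conjugacy exists because $f|_I$ is a fixed-point-free monotone homeomorphism of an open interval. A continuous function on $[0,1]$ vanishing at all fixed points restricts on each $I$ to an element of $C_0(\R)$ after composing with $h_I^{-1}$, and conversely functions in $C_0(\R)$ glue to continuous functions vanishing at the endpoints of $I$. The plan is to use the $k$-fold cyclicity from Lemma \ref{Atzmon} to produce, for the $p$ intervals adjacent to the fixed points (or rather for a suitable bookkeeping that pairs intervals with the components of $C_0(\R)^k$), a single function whose translates are dense in the space of continuous functions vanishing on $\mathrm{Fix}(f)$. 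Adding the constant function $\mathbb 1$ (or, more precisely, the $p$ functions detecting which invariant measure one sits over, as in Lemma \ref{lem:-1} and Proposition \ref{mu}) then upgrades this to a generating family for all of $C([0,1])$, giving $\M(f)\leq p$.

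The main obstacle will be the bookkeeping when $\mathrm{Fix}(f)$ is infinite, since then there are infinitely many wandering intervals but the target vector space $C_0(\R)^k$ has only finitely many slots. The clean way around this is to invoke Lemma \ref{cyclic}: one builds an exhausting sequence of finite approximations $\mathcal F_n$ supported on finitely many wandering intervals (those meeting a $1/n$-net of $[0,1]$), verifies the approximation hypothesis \eqref{baxtou} using the density of such functions together with Lemma \ref{Atzmon} applied interval-by-interval, and controls $\sup_n\sharp\mathcal F_n$ by $p$. I expect the delicate points to be (i) checking that the conjugacies $h_I$ can be chosen so the relevant approximations are uniform across the countably many intervals, and (ii) separating the contributions of $\overline{B_f([0,1])}$ from the $\tilde U_f$-part cleanly, so that the count comes out to exactly $\sharp\MM_e$ rather than $\sharp\MM_e+1$; this is precisely where the identity $\M(f)=\M(\underline U_f)$ for the uniquely ergodic pieces, combined with Proposition \ref{mu}, must be applied with care.
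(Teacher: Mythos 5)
Your lower bound and your treatment of the increasing case with finitely many fixed points are essentially the paper's own argument: with $p$ fixed points, $\overline{B_f([0,1])}$ is exactly the space of continuous functions vanishing on $\mathrm{Fix}(f)$, the restriction $\underline{U}_f$ is spectrally conjugate to the translation operator on $C_0(\mathbb R)^{p-1}$, hence cyclic by Lemma \ref{Atzmon}, and Proposition \ref{mu} with Lemma \ref{erg} gives $\M(f)\le p+1-1=p$. But you have inverted where the difficulty lies. The infinite fixed-point case, which you single out as the main obstacle and propose to attack via Lemma \ref{cyclic} with an exhausting sequence of finite families, is in fact vacuous: infinitely many fixed points give infinitely many ergodic Dirac measures, so Corollary \ref{err} already forces $\M(f)\ge \sharp\mathcal M_e([0,1],f)=\infty$ and equality holds with nothing left to prove. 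Worse, your plan there could not succeed as described: a Baxter-type argument with $\sup_n \sharp\mathcal F_n\le p<\infty$ would output a finite generating family, which Corollary \ref{err} forbids whenever there are infinitely many ergodic measures.

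The genuine gap is the orientation-reversing case, which you dispose of with ``handled by passing to $f^2$''. This is not automatic, and the naive version yields the wrong count. If $a$ is the unique fixed point of a decreasing $f$ and $f^2|_{[0,a]}$ has $k$ fixed points $x_1<\cdots<x_k=a$, then $f^2$ on all of $[0,1]$ has $2k-1$ fixed points, so the increasing case applied to $f^2$ only gives $\M(f)\le \M(f^2)=2k-1$, whereas the target is $\sharp\mathcal M_e([0,1],f)=k$: the ergodic measures of $f$ are $\delta_a$ and the period-two measures $\frac{1}{2}\left(\delta_{x_i}+\delta_{f(x_i)}\right)$, not Dirac masses at fixed points. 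Note also that for decreasing $f$ the space $\overline{B_f([0,1])}$ is strictly larger than the functions vanishing on $\mathrm{Fix}(f^2)$ — it is cut out by the conditions $\psi(a)=0$ and $\psi(x_i)+\psi(f(x_i))=0$ — so your identification of the coboundary closure with a finite power of $C_0(\mathbb R)$ breaks down as well. The paper instead takes a generating family $g_1,\dots,g_k$ for $f^2$ restricted to $[0,a]$ (where it has exactly $k$ fixed points), extends each $g_l$ to $[0,1]$ by the constant value $g_l(a)$ on $[a,1]$, and verifies by hand — approximating $h$ on $[0,a]$ with even powers and $h\circ f^{-1}$ on $[0,a]$ with the combination ``odd powers plus the constant $h(a)$'', with error estimates at the gluing point $a$ — that the extended family generates $C([0,1])$ for $f$. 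Some construction of this kind, exploiting that odd powers of $f$ carry $[0,a]$ onto $[a,1]$, is indispensable, and your proposal contains none of it.
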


\begin{proof}
We first deal with the case of an increasing homeomorphism. The ergodic measures of $f$ are the Dirac measures at these fixed points. Notice that $f$ has at least two fixed points, $0$ and $1$. If it has infinitely many fixed points, then $\M(U_f)\ge \sharp \mathcal M_e([0,1],f)=\infty$. Now assume it has finitely many fixed points.  Let $2\le k+1<+\infty$ be the number of fixed points.   Since $\varphi (x)-\varphi \circ f (x)=0$ for any continuous function $\varphi\in C(X)$ and any fixed point $x$, the space $\overline{B_f([0,1])}$ is the set of  real continuous maps on the interval which vanishes at the fixed points. It follows that the operator  $\underline{U}_f$ is spectrally conjugate to $V: C_0(\mathbb R)^{k}\circlearrowleft$, $(f_i)_{1\leq i\leq k}\mapsto \left(f_i(\cdot+1)\right)_{1\leq i\leq k}$. By Lemma \ref{Atzmon} we have $\M(\underline{U_f})=1$. It follows then from Proposition \ref{mu} and Proposition \ref{erg} that 
\begin{equation}\label{eq1234}
    \sharp \mathcal M_e([0,1],f)\leq \M(U_f)\leq \sharp \mathcal M_e([0,1],f)+\M(\underline{U_f})-1=\sharp\mathcal M_e([0,1],f).
\end{equation}

It remains to consider the case of a decreasing homeomorphism $f$. Let $0<a<1$ be the unique fixed point of $f$. Then $f^{2}:[0,a]\circlearrowleft$ is an increasing homeomorphism. Let $0=x_1<x_2<\cdots<x_k=a$ be the fixed points of $f^{2}|_{[0,a]}$. Then the ergodic measures of $f$ are the atomic periodic measures 
$\delta_a$ and  $\frac{1}{2}\left(\delta_{x_i}+\delta_{f(x_i)}\right)$
for $i=1, \cdots, k-1$. In particular we have $k=\sharp \mathcal M_e([0,1],f)$. From the previous case there is a generating family $\mathcal G=\{g_1, \cdots, g_k\}$ for $f^{2}:[0,a]\circlearrowleft$. Let $h\in C([0,1])$. For any $\epsilon>0$, there are $N\in \mathbb N$, $a_{l,n}$ and $b_{l,n}$, for $l=1,\cdots, k $ and $|n|\leq N$, (depending on $\epsilon$), such that 
\begin{align}
\|h-\sum_{l,n}a_{l,n}g_l\circ f^{2n}\|_{{[0,a]},\infty}&<\epsilon,\label{evla}
\end{align}
and
\begin{align}
\|h\circ f^{-1} -\sum_{l,n}b_{l,n}g_l\circ f^{2n}-h(a)\|_{{[0,a]},\infty}&<\epsilon,\label{evla1}
\end{align}
where $\|g\|_{{[0,a]},\infty}=\sup_{x\in [0,a]} |g(x)|$.
We consider the extension  $\tilde{g_l}$ of $g_l$ to $[0,1]$ with $\tilde{g_l}=g_l(a)$ on $[a,1]$.  We check now that  $\tilde{\mathcal G}=\{\tilde g_1, \cdots, \tilde g_k\}$ is generating for $f$. It follows from (\ref{evla}) and  (\ref{evla1}) at $x=a$ that 
\begin{equation}\label{ev3}
    \left|h(a)-\sum_{l,n}a_{l,n} g_l(a)\right|<\epsilon \text{ and } \left|\sum_{l,n}b_{l,n} g_l(a)\right|<\epsilon.
\end{equation}
Observe that 
\begin{align*}
h&=h|_{[0,a]}+(h\circ f^{-1}|_{[0,a]})\circ f|_{[a,1]}.
\end{align*}  
Combining \eqref{ev3} with (\ref{evla}), we obtain that
\begin{align*}
    &\|h-\sum_{l,n}a_{l,n}\tilde{g_l}\circ f^{2n}-\sum_{l,n}b_{l,n}\tilde{g_l}\circ f^{2n+1}\|_{[0,a], \infty} \\
    = & \|h-\sum_{l,n}a_{l,n}{g_l}\circ f^{2n}-\sum_{l,n}b_{l,n}{g_l}(a) \|_{[0,a] \infty} <2\epsilon.
\end{align*}
Similarly, combining \eqref{ev3} with (\ref{evla1}) we get 
\begin{align*}
    &\|h-\sum_{l,n}a_{l,n}\tilde{g_l}\circ f^{2n}-\sum_{l,n}b_{l,n}\tilde{g_l}\circ f^{2n+1}\|_{[a,1], \infty} \\
    = & \|h-\sum_{l,n}a_{l,n}{g_l}(a)-\sum_{l,n}b_{l,n}{g_l}\circ f^{2n+1} \|_{[a,1] \infty} \\
    \le & |h(a)-\sum_{l,n}a_{l,n}{g_l}(a)|+ \| h-\sum_{l,n}b_{l,n}{g_l}\circ f^{2n+1} -h(a) \|_{[a,1] \infty}
    <2\epsilon.
\end{align*}
Therefore we have 
$$\|h-\sum_{l,n}a_{l,n}\tilde{g_l}\circ f^{2n}-\sum_{l,n}b_{l,n}\tilde{g_l}\circ f^{2n+1}\|_\infty<2\epsilon.$$
We conclude that $\tilde{\mathcal G}$ is generating for $f$ as $\epsilon>0$ and $h\in C([0,1])$ have been chosen arbitrarily.

\end{proof}

\begin{ques}
What is the topological multiplicity of a Morse-Smale diffeomorphism?
\end{ques}

\subsection{Substitution Subshift}\label{subs}

For  a finite alphabet $\mathcal A$ with $\sharp \mathcal A\geq 2$ we let $\mathcal A^+$ be the set of associated finite words,  i.e.  finite  sequences of letters in $\mathcal A$.   For a word $w$ in $\mathcal A^+$ we let $|w| $ be the number of letters of $w$.  A  {\bf substitution} is a map $\zeta:\mathcal A\rightarrow \mathcal A^+$.   We say $\zeta$ has the {\bf constant length} if $|\zeta(a)|$ does not depend on $a\in \mathcal A$.   
The substitution may be extended to finite and infinite (one-sided) words by concatenation: $\zeta(u_0u_1\cdots)= \zeta(u_0)\zeta(u_1)\cdots$ for $u_i\in \mathcal{A}$ and $i\in \mathbb N$.  In this way $\zeta$ can be iterated.  

The langage $\mathcal L_\zeta$ of $\zeta$ is the set of  all subwords of $\zeta^n(a)$ over $a\in \mathcal A$ and $n\in \mathbb N$,  i.e.  the set of finite sequence of consecutive letters appearing in  the words  $\zeta^n(a)$.  The associated subshift $X_\zeta\subset \mathcal A^\mathbb{Z}$ is the set of  sequences $(x_n)_{n\in \mathbb Z}$ such that $x_px_{p+1}\cdots x_q$ belongs to $\mathcal L_\zeta$ for any $p\leq q\in \mathbb Z$. The map $\zeta$ may be extended to map from $X_\zeta$ to $X_\zeta$ by letting 
$$\zeta( x)=\cdots \zeta( x_{-1})\zeta(x_{-1})\, | \, \zeta(x_0)\cdots \zeta( x_{1})\cdots$$ for any $x=(x_n)_{n\in\Z}\in X_{\zeta}$, where  the non negative coordinates of $\zeta(x)$  lie after the symbol ``$|$".  For $w\in \mathcal A$ we recall that the associated cylinder $[w] $  in $X_\zeta$  is given by the sequences in $X_{\zeta}$ whose $|w|$ first coordinates coincide with $w$. 

A substitution $\zeta$ is said to be {\bf primitive} when there exists $n\in \mathbb N^*$ such that for  any letters $a,b$ the word  $\zeta^n(a)$ contains the letter $b$.  It is well-known that the shift $\sigma$ on $X_\zeta$ with primitive $\zeta$ is then minimal and uniquely ergodic (see \cite{queffelec2010substitution}).  In the followings, we only consider aperiodic primitive substitutions, i.e. primitive substitutions such that $X_\zeta$ is not reduced to a periodic orbit.  

We will make use of the standard recognazibility property of primitive substitutions: 

\begin{thm}\label{mosh}\cite{mosse}
Let $\zeta$ be an aperiodic primitive substitution. 
Then  for any positive integer $n$,  the family  $$P_n:=\{\sigma^k\left(\zeta^n[a]\right), \ a\in \mathcal A \ \text{ and } 0\leq k< |\zeta^n(a)|\}$$ defines a partition of $X_{\zeta}$ into clopen sets.  \\ 

Moreover $\zeta^n: X_{\zeta}\rightarrow \zeta^n(X_\zeta)$ is a topological conjugacy between 
$(X_\zeta,\sigma)$ and  the induced system  $\left(\zeta^n(X_\zeta),\sigma_{\zeta^n(X_\zeta)}\right)$ defined as  $\sigma_{\zeta^n(X_\zeta)}(x)=\sigma^{|\zeta^n(x_0)|}(x)$ for any $x=(x_k)_{k\in \Z}\in \zeta^n(X_{\zeta})$.
\end{thm}

We define two equivalence relations on $\mathcal A$ as follows:
$$a\sim^{d}b\text{ if and only if } \text{ $\zeta(a)$ and $\zeta(b)$ begin with the same letter};$$
$$a\sim^{f}b\text{ if and only if } \text{ $\zeta(a)$ and $\zeta(b)$ end with the same letter}.$$
We then let $\mathcal A_d$ and $\mathcal A_f$ be respectively the equivalence classes of $\sim^d$ and $\sim^f$.  
   We let $N_d$ and $N_f$ be the number of equivalence classes for $\sim^d$ and $\sim^f$.

\begin{thm}\label{multsub}Let $\zeta$ be a primitive aperiodic substitution (resp.  with constant length). Then the multiplicity 
$(X_\zeta, \sigma)$ is less than or equal to (resp. strictly less than) $$K_\zeta= \sharp \mathcal A(N_d+N_f)+\min(N_d,N_f)-(\sharp \mathcal A+N_dN_f),$$
which is less than or equal to $\sharp \mathcal A^2$.
\end{thm}

For a proper substitution (i.e.  when $N_d=N_f=1$) we get  $K_\zeta=\sharp \mathcal A$.  The partitions $(P_n)_{n\in \N}$ in Theorem \ref{mosh} are not refining, i.e. the diameter of $P_n$ does not go to zero with $n$ in general.  To prove Theorem \ref{multsub} we use the following finer clopen partitions $(Q_n)_{n\in \N}$.  For a subset $ J\subset \mathcal A$ and $a\in \mathcal A$ we let $[a| J]\subset X_{\zeta}$ (resp.  $[J |a]$) be the union of  $\sigma ([aj])$ (resp. $\sigma ([ja])$) over $j\in  J$).

\begin{lem}\label{bofff}
Let $\zeta$ be an aperiodic primitive substitution and let $M_n$ be the integer part of $\min_{a\in \mathcal A}|\zeta^{n-1}(a)|/2$.  Then  for any positive integer $n$,  the family  \begin{align*}Q_n:=&\{\sigma^l\left(\zeta^n[J_f| a]\right): \ a\in \mathcal A, \ J_f\in \mathcal A_f \text{ and } 0\leq l<M_n\} \bigcup \\
& \{\sigma^{-l}\left(\zeta^n[a| J_d]\right): \ a\in \mathcal A, \  J_d\in \mathcal A_d  \text{ and } 0\leq l<|\zeta^n(a)|-M_n\}
\end{align*}
defines a partition of $X_{\zeta}$ into clopen sets whose diameter is going to zero when $n$ goes to infinity.
\end{lem}

\begin{proof}
Clearly, for $0\leq k<M_n$, the clopen sets  $\sigma^k\left(\zeta^n[J_f| a]\right)$  for $J_f\in \mathcal A_f$  defines a partition of  $\sigma^{k}(\zeta^n[a])$. On the other hand, for $0\leq l<|\zeta^n(a)|-M_n$,  the clopen sets  $\sigma^{-l}\left(\zeta^n[a| J_d]\right)$  for $J_d\in \mathcal A_d$  defines a partition of  $\sigma^{|\zeta^n(a)|-l}(\zeta^n[a])$.  Therefore we deduce then from Theorem \ref{mosh} that $Q_n$ is a partition of $X_\zeta$.  It remains to show that their diameter is going to zero when $n$ goes to infinity. Fix $a\in \mathcal A$ and  $J_f\in \mathcal A_f$  and let $b\in \mathcal A$ such that $\zeta(j)$ ends with the letter $b$  for any $j\in J_f$.  Then any sequence in $\zeta^n[J_f|a]$ lies in $\sigma^{|\zeta^{n-1}(b)|}\left([\zeta^{n-1}(b)\zeta^n(a)]\right)$.  
In particular, for any $x,y\in \sigma^{l}\left(\zeta^n[J_f|a]\right)$ with $0\le l<M_n$, we 
have $x_{-M_n+1}\cdots x_{M_n-1}=y_{-
M_n+1}\cdots y_{M_n-1}$  since we have 
$M_n\leq \max(|\zeta^n(a)|/2, |\zeta^{n-1}(b)|)$.  
Arguing similarly with $\sigma^{-l}\left(\zeta^n[a| J_d]\right)$ we conclude that the diameter of $Q_n$ goes to zero when $n$ goes to infinity, because $M_n$ goes to infinity.
\end{proof}
For a finite collection $\mathcal C$ of clopen sets and for $n\in \mathbb N^*$ we let $\mathcal F_n^{\mathcal C}$ be the set of indicator functions $\chi_{\zeta^{n}(C)}$ with $C\in \mathcal C$. We also write 
$\mathcal F(Q_n)$ for the set of indicator functions $\chi_{A}$ with $A\in Q_n$.
\begin{lem}\label{ouch}
There is a finite collection $\mathcal C$ of clopen sets with $\sharp \mathcal C\leq K_\zeta$,  such that $$\forall n\in \mathbb N^*, \ V_{\mathcal F(Q_n)}=V_{\mathcal F_n^{\mathcal C}}.$$
\end{lem}

\begin{proof}
Without loss of generality we may assume $N_d\leq N_f$. Fix $\overline{J_f}\in \mathcal A_f$ and $\overline{i_d}\in J_d$ for any $J_d\in \mathcal A_d$. We consider the collection 
$$\mathcal C:=\left \{[a|J_d],\ [J_f|b]\ :\ a\in \mathcal A, \ J_d\in\mathcal A_d, \ J_f\in \mathcal A_f\setminus \{\overline{J_f}\}, \ b\in J_d\setminus \{\overline{i_d}\} \   \right\}.$$

Clearly $V_{\mathcal F(Q_n)}\supset V_{\mathcal F_n^{\mathcal C}}$ and   $$\sharp \mathcal C\leq N_d\sharp \mathcal A+ (N_f-1)(\sharp \mathcal A-N_d)=K_\zeta.$$

We show now the other inclusion $V_{\mathcal F(Q_n)}\subset V_{\mathcal F_n^{\mathcal C}}$.
Let $\tilde{\mathcal C}_n$ be the collection of clopen sets $E$ satisfying   $  \chi_{\zeta^n (E)}\in V_{\mathcal F_n^{\mathcal C}}$. 
It is enough to check $[a|J_d]$ and $[J_f|b]$ lie in $\tilde{\mathcal C}_n$ for any $a,b\in \mathcal A$, $J_d\in \mathcal A_d$ and $J_f\in \mathcal A_f$. 
 For  $E, E'\in \tilde{\mathcal C}_n$, the union $E\cup E'$ (resp. $E\setminus E'$) also belongs to $\tilde{\mathcal C}_n$ if $E\cap E'=\emptyset$ (resp. $E'\subset E$), because $ \chi_{\zeta^n(E\cup E')}=\chi_{\zeta^n(E')}+\chi_{\zeta^n(E)}$ (resp.  $ \chi_{\zeta^n(E\setminus E')}=\chi_{\zeta^n(E)}-\chi_{\zeta^n(E')}$ ) as $\zeta^n$ is one-to-one.

 Then for any   $J_d\in \mathcal A_d$ and any $J_f\in \mathcal A_f$, the set $[J_f|J_d]=\coprod_{a\in J_f}[a|J_d]$ lie in $\tilde{\mathcal C}_n$, because they  are  disjoint unions of elements of $\mathcal C\subset \tilde{\mathcal C}_n$. We have also $[J_f|\overline{i_d}]=[J_f|J_d]\setminus \left(\coprod_{J_d\ni b\neq \overline{i_d}}[J_f|b]\right)\in \tilde{\mathcal C}_n$ for any $J_f\neq \overline{J_f}$. Finally for any $b\in \mathcal A$ we get $[\overline{J_f}|b]=[b]\setminus\left(\coprod_{J_f\neq \overline{J_f}}[J_f|b]\right)$. But $[b]=\sigma^{-1}([b|])$, then $\zeta^n([b])=\sigma^{-|\zeta^n(b)|}\left(\zeta^{n}([b|])\right)$ and $[b|]=\coprod_{J_d}[b|J_d]\in \tilde{\mathcal C}_n$. Therefore 
 $\chi_{\zeta^n([b])}= \chi_{\zeta^{n}([b|])}\circ \sigma^{|\zeta^n(b)|}\in V_{\mathcal F_n^\mathcal C}$, thus $[b]\in \tilde{\mathcal{C}}_n$. We conclude that 
 $[\overline{J_f}|b]$ also belongs to $\tilde{\mathcal C}_n$.

\end{proof}

\begin{proof}[Proof of Theorem \ref{bofff}]
Let $\mathcal C$ be the collection given by Lemma \ref{ouch}. Then for any $A\in Q_n$,  the function $\chi_{A}$ belongs to $V_{\mathcal F_n^{\mathcal C}}$.  As the diameter of $Q_n$ goes to zero with $n$, the hypothesis of Lemma \ref{cyclic} are satisfied, so that $\M(X_\zeta, \sigma)\leq \sharp \mathcal C\leq K_\zeta$. 

When moreover the substitution has constant lenght $L$, then the induced system on $\zeta^n(X_\zeta)$ is just the power $\sigma^{L^n}$.  By Theorem \ref{mosh} the induced system is topologically conjugated to the substitution system, thus it is also uniquely ergodic. Let $\nu_n$ be the corresponding unique invariant measure. In particular for any clopen subset $C$ of $\zeta^n(X)$,  the sequence of continuous functions 
$$\frac{1}{K}\sum_{0\leq k\leq K}\chi_C\circ \sigma^{L^n}$$ is converging uniformly to $\nu_n(C)\chi_{\zeta^n(X)}$ when $K$ goes to infinity. 
When defining $\mathcal C$ in the previous lemma, we let all $[a|J_d]$ in $\mathcal C$ for $a\in \mathcal A$ and $J_d\in \mathcal A_d$. The elements of this form defines a partition and we may choose one such element $[\overline{a}|\overline{J_d}]$ such that $\zeta^n([\overline{a}|\overline{J_d}])$ has not full measure for the induced system. We can remove this element from $\mathcal C$. We let $\mathcal C'$ be  this new collection.   If $C'=[a|J_d]$ is an  element of $\mathcal C'$ with $\nu_n(\zeta^n(C'))>0$, then $\chi_{\zeta^n(X)}$ belongs to $V_{\chi_{\zeta^n (C')}}$ therefore $\chi_{\zeta^{n}([\overline{a}|\overline{J_d}])}$ lies in $V_{\mathcal F_n^{\mathcal C'}}$, i.e. $V_{\mathcal F_n^{\mathcal C}}=V_{\mathcal F_n^{\mathcal C'}}$.  Therefore $\M(X_\zeta, \sigma)\leq \sharp \mathcal  C'=\sharp \mathcal C-1<K_\zeta$.

\end{proof}

For the Thue-Morse substitution: $0\mapsto 01$ and $1\mapsto 10$, we get  $N_d=N_f=2$, thus $\M(X_\zeta, \sigma) < K_\zeta=4 $. 

\begin{ques}
    What is the topological multiplicity of the Thue-Morse substitution? 
\end{ques}

\section{Subshifts with linear growth complexity}
We consider a subshift $X\subset \mathcal A^\mathbb{Z}$ with letters  in a finite alphabet $\mathcal A$. For $x\in \mathcal A^\mathbb{Z}$ we denote by $x=(x_n)_{n\in \Z}$ for $x_n\in \mathcal{A}$. Let $\mathcal L_n(X)\subset \mathcal{A}^n$ be the finite words of $X$ of length $n$, i.e. $\mathcal L_n(X)=\{ x_k x_{k+1} \dots x_{k+n-1}: x\in X, k\in \Z \}$. The word complexity of $X$ is given by 
$$\forall n\in \mathbb N, \ \ p_X(n)=\sharp \mathcal L_n(X).$$
 We suppose that $X$ is aperiodic and has  linear growth, that is,  for some $k\in \mathbb N^*$ 
 \begin{equation}\label{eq:linear growth}
 	\liminf_n\frac{p_X(n)}{n}\le k.
 \end{equation}
 Boshernitzan \cite{boshernitzan1992condition} showed that such a subshift admits at most $k$ ergodic measures. By Theorem 5.5 in \cite{donoso2021interplay}  such  subshifts, when assumed to be moreover minimal,  have topological rank less than or equal to $(1+k\sharp\mathcal A^2)^{2(k+2)}$. We show in this section the following upper bound on the topological multiplicity. 
 
 \begin{thm}\label{mubo}
Any aperiodic subshift $X$ with $\liminf_{n\to \infty}\frac{p_X(n)}{n}\le k$ has topological multiplicity less than or equal to $2k$.
\end{thm}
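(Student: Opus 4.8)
The plan is to apply the generalized Baxter Lemma (Lemma \ref{cyclic}) to $B=C(X)$ and $U=U_\sigma$, along a well-chosen subsequence of word-lengths, using as generating families the indicator functions of cylinders over the special words. Since $C(X)$ is separable and $U_\sigma$ is an invertible isometry, and since the cylinder functions $\chi_{[w]}$ span a dense subspace, by the remark following Lemma \ref{cyclic} it suffices to produce a subsequence $(n_j)_j$ and finite families $\mathcal F_{n_j}\subset C(X)$ with $\sup_j \sharp \mathcal F_{n_j}\le 2k$ such that
$$\{\chi_{[v]}: v\in \mathcal L_{n_j}(X)\}\subset V_{\mathcal F_{n_j}}.$$
Indeed, every cylinder function of length $m$ is a finite sum of length-$n_j$ cylinder functions once $n_j\ge m$, so this inclusion forces the approximation hypothesis \eqref{baxtou} on the spanning set of all cylinder functions, and Lemma \ref{cyclic} then yields a family of cardinality at most $2k$ whose cyclic span is $C(X)$, i.e. $\M(T)\le 2k$.

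First I would select the subsequence. Writing $s(n)=p_X(n+1)-p_X(n)$ for the first difference of the complexity, the hypothesis $\liminf_n p_X(n)/n\le k$ forces $s(n)\le k$ for infinitely many $n$: otherwise $s(n)\ge k+1$ for all large $n$ would give $p_X(n)\ge (k+1)n-C$ and hence $\liminf_n p_X(n)/n\ge k+1$, a contradiction. Fix such a subsequence $(n_j)_j$. Counting the complexity increment through right extensions gives $s(n)=\sum_{w}(\deg^+(w)-1)$, the sum running over right-special words $w\in\mathcal L_n(X)$ (those with at least two right extensions); hence the number of right-special words of length $n_j$ is at most $s(n_j)\le k$. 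Counting instead through left extensions shows likewise that the number of left-special words of length $n_j$ is at most $k$. I then set
$$\mathcal F_{n_j}=\{\chi_{[w]}: w\in\mathcal L_{n_j}(X)\ \text{right-special or left-special}\},$$
so that $\sharp\mathcal F_{n_j}\le 2k$.

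The heart of the proof is to show $\chi_{[v]}\in V_{\mathcal F_{n_j}}$ for every $v\in\mathcal L_{n_j}(X)$. The basic mechanism is a propagation identity on the Rauzy graph (vertices $\mathcal L_{n}(X)$, edges $\mathcal L_{n+1}(X)$): if $v$ is not right-special, with unique right extension $va$, and its successor $v'$ (the length-$n$ suffix of $va$) is not left-special, then one checks directly that $\{x:x_0\cdots x_{n-1}=v\}=\{x:x_1\cdots x_n=v'\}$, that is
$$\chi_{[v]}=U_\sigma\chi_{[v']}.$$
Since $V_{\mathcal F_{n_j}}$ is closed and $U_\sigma$-invariant, such clean steps propagate membership both forward and backward along chains of non-special words. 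Using that $X$ is aperiodic — so that the forced-extension successor map has no cycles (a cycle would produce a periodic point) and right-special words exist at every length — every $v$ reaches a right-special seed by iterating the successor map; along the way the only failures of the clean identity occur precisely at left-special words, which are themselves seeds. At such a branch point $s$ one has the relation $U_\sigma\chi_{[s]}=\sum_{i}\chi_{[p_i s_{n-1}]}$ over the predecessors $p_1,\dots,p_d$ of $s$, expressing the sum of the predecessor cylinder functions in terms of the seed $\chi_{[s]}$; isolating one predecessor then requires the remaining ones, which must be propagated in turn. The main obstacle is precisely to verify that this inductive resolution — bootstrapping the forward and backward propagation through the bounded number of left- and right-special words — terminates and reaches every vertex; I would organize it as an induction along the well-founded order induced on $\mathcal L_{n_j}(X)/\langle\sigma\rangle$ by the acyclic forced-extension dynamics, the symmetric roles of left- and right-special words accounting for the factor $2$ in $2k$.

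Finally, granting the inclusion $\{\chi_{[v]}:v\in\mathcal L_{n_j}(X)\}\subset V_{\mathcal F_{n_j}}$ for all $j$, the approximation hypothesis of Lemma \ref{cyclic} holds on the dense spanning set of cylinder functions, and since $\sup_j\sharp\mathcal F_{n_j}\le 2k$ the lemma produces a generating family of $U_\sigma$ of cardinality at most $2k$. Hence $\M(T)\le 2k$.
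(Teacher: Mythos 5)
Your global framework is the paper's: apply the generalized Baxter Lemma \ref{cyclic} to families of cylinder indicators along a subsequence where $p_X(n+1)-p_X(n)\le k$, reducing everything to showing that all length-$n$ cylinder indicators lie in $V_{\mathcal F_n}$ for a family of cardinality at most $2k$. Your subsequence extraction and your count of left- and right-special words are correct. But the heart of your proof --- that the indicators of the length-$n$ \emph{special} cylinders generate all length-$n$ cylinders by propagation along the Rauzy graph --- is a genuine gap, and the inductive scheme you invoke does not exist as described. Aperiodicity only forbids cycles of the forced-extension map that avoid right-special words (this is the content of the paper's Lemma \ref{bound}); for a transitive subshift the Rauzy graph itself is strongly connected, so every propagation chain passes through special vertices and the dependency relation you want to induct on is cyclic --- there is no well-founded order on $\mathcal L_n(X)$ making your resolution terminate. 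Concretely, at a left-special word $s$ your relation $U_\sigma\chi_{[s]}=\sum_i\chi_{[p_is_{n-1}]}$ involves $(n{+}1)$-cylinders, and whenever a predecessor $p_i$ is right-special the term $\chi_{[p_is_{n-1}]}$ is strictly finer than anything in your family. Test this on a Sturmian subshift, where the graph consists of two disjoint paths from the unique right-special word $r_n$ to the unique left-special word $l_n$: from $\chi_{[r_n]}$ and $\chi_{[l_n]}$ your mechanism yields only the \emph{sums} $\chi_{[u_1]}+\chi_{[v_1]}$ and $\chi_{[u_p]}+\chi_{[v_q]}$ of branch endpoints, plus shift relations along each path; eliminating between them produces elements of the form $(\mathrm{Id}-U_\sigma^{r})g$ with $g=\chi_{[v_q]}$, and the closed shift-span of such elements contains only functions with zero integral against every invariant measure, while $\int g\,d\mu=\mu([v_q])>0$. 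So the local relations alone cannot isolate a single branch cylinder, and your sketch (``isolating one predecessor then requires the remaining ones, which must be propagated in turn'') circles back through the same special vertices without ever closing.

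The paper sidesteps this obstruction by choosing a different family of the same cardinality: not the length-$n$ special cylinders, but the set $Q'_{n+1}$ of \emph{all} $(n{+}1)$-cylinders $[wa]$ with $w$ right-special, whose size is $\sharp Q_n+(p_X(n+1)-p_X(n))\le 2(p_X(n+1)-p_X(n))\le 2k$ along the subsequence $\mathcal N$ of Lemma \ref{lem:R} --- so the factor $2$ comes from right-special words plus their extensions, not from a left/right symmetry. Note also that $\mathcal N$ carries the extra condition $p_X(n+1)<(k+1)(n+1)$, which your subsequence omits and which is what makes Lemma \ref{bound} (every word of length $\ge(k+2)(n+1)$ contains a right-special subword) available. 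With the family $Q'_{n+1}$ no elimination is needed: coding each $x\in[w]$ by the \emph{last} occurrence of a right-special $n$-word before the end of $w$, the extension letter determines all subsequent letters, giving the exact finite disjoint decomposition $[w]=\coprod\sigma^{p}[q'_{n+1}]$ of Lemma \ref{dense2}; hence $\chi_{[w]}\in V_{F_n}$ outright and Baxter applies. To salvage your symmetric family you would have to prove the isolation step, which by the $(\mathrm{Id}-U_\sigma^{r})$ obstruction cannot follow from the propagation relations alone; this is a missing idea, not a routine verification.
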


One may wonder if the upper bound in Theorem \ref{mubo} is sharp. 
\begin{ques}Is an aperiodic subshift $X$ with $\liminf_{n\to \infty}\frac{p_X(n)}{n}=1$ topologically simple?
\end{ques}

In order to prove Theorem \ref{mubo}, we define some notations. Let $Q_n$ be the subset of $\mathcal L_n(X)$ given by words  $w$ such that there are several letters $a\in \mathcal A$ with  $wa\in \mathcal L_{n+1}(X)$. We also let $Q'_{n+1}$  be the $(n+1)$-words $wa$ as above. 
Clearly, we have
\begin{equation}\label{boc}
\sharp Q_{n} \leq p_X(n+1)-p_X(n) \text{ and } \sharp Q'_{n+1}=\sharp Q_n + p_X(n+1)-p_X(n).
\end{equation}
 
Through this section, we always assume the subshift is aperiodic and satisfies the linear growth \eqref{eq:linear growth}. 

\begin{lem}\label{lem:R}
The subset of integers 
$$
\mathcal N=\{n\in \N: p_X(n+1) < (k+1)(n+1) \text{ and } p_X(n+1)-p_X(n) \le k \}.
$$
is infinite.
\end{lem}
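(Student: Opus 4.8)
The plan is to argue by contradiction, converting a finite $\mathcal N$ into a forced linear lower bound on $p_X$ that clashes with the hypothesis $\liminf_n p_X(n)/n\le k$. The single elementary input I would extract from that hypothesis is the existence of \emph{infinitely many} integers $m$ with $p_X(m)<(k+\tfrac12)m$: indeed $\liminf_n p_X(n)/n\le k<k+\tfrac12$ produces a subsequence of ratios whose values eventually drop below $k+\tfrac12$. The exact constant $\tfrac12$ is irrelevant; what matters is a fixed strict gap below $k+1$, and it is precisely this gap that will generate the contradiction.

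Suppose then that $\mathcal N$ is finite, and fix $N_0$ strictly larger than every element of $\mathcal N$. For every $n\ge N_0$ the two defining conditions of $\mathcal N$ cannot both hold, which read contrapositively says: whenever $n\ge N_0$ satisfies $p_X(n+1)<(k+1)(n+1)$, one must have $p_X(n+1)-p_X(n)\ge k+1$ (using that the increments are integers). I would then fix a large good index $m$ with $p_X(m)<(k+\tfrac12)m$ and run a downward induction from $m$ to $N_0+1$, proving that $p_X(j)<(k+1)j$ holds for every $N_0+1\le j\le m$. The base case $j=m$ is immediate, and in the inductive step, if $p_X(j)<(k+1)j$ for some $N_0+2\le j\le m$, applying the implication at $n=j-1\ge N_0$ forces $p_X(j)-p_X(j-1)\ge k+1$, whence $p_X(j-1)\le p_X(j)-(k+1)<(k+1)(j-1)$. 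This simultaneously propagates the inequality one step down and records a large increment $\ge k+1$ at each relevant $n$.

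Telescoping the increments over $N_0+1\le n\le m-1$ (all of which are $\ge k+1$ by the previous step) gives $p_X(m)\ge p_X(N_0+1)+(k+1)(m-N_0-1)\ge 1+(k+1)(m-N_0-1)$, where I only use $p_X(N_0+1)\ge 1$. Comparing this with $p_X(m)<(k+\tfrac12)m$ yields $\tfrac12 m<(k+1)(N_0+1)-1$, a bound on $m$ by a constant depending only on $k$ and $N_0$. Since good indices $m$ are unbounded, choosing $m$ beyond this constant is a contradiction, so $\mathcal N$ must be infinite.

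The subtle point I expect to be the crux is the need for the \emph{strict} gap below $k+1$: knowing merely that $p_X(m)<(k+1)m$ infinitely often would make the upper and lower bounds on $p_X(m)$ compatible and defeat the argument, so the $\liminf\le k$ hypothesis must be fed in with slack so that the small complexity at good indices can beat the accumulated increments. I would also note that aperiodicity is not actually needed for this counting lemma (only $p_X(N_0+1)\ge 1$ is used); it plays its role in the subsequent results rather than here.
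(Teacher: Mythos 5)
Your proof is correct, and it takes a genuinely different route from the paper's, which argues directly rather than by contradiction. From the hypothesis the paper first deduces $\liminf_n\bigl(p_X(n)-(k+1)n\bigr)=-\infty$ and then considers the set of strict record times $\mathcal M=\bigl\{n\in\N:\ p_X(n+1)-(k+1)(n+1)\le\min\{0,\min_{1\le m\le n}(p_X(m)-(k+1)m)\}-1\bigr\}$, which is infinite because the integer-valued sequence $p_X(n)-(k+1)n$ has $\liminf$ equal to $-\infty$; at such an $n$ the drop of at least $1$ gives $p_X(n+1)-p_X(n)\le k$, while the $\min\{0,\cdot\}$ clause gives $p_X(n+1)\le(k+1)(n+1)-1$, so $\mathcal M\subset\mathcal N$. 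Both arguments pivot on the same dichotomy --- outside $\mathcal N$, lying below the line $(k+1)n$ forces an integer increment of at least $k+1$ --- but you run it contrapositively, via a downward induction from a good index $m$ with $p_X(m)<(k+\tfrac12)m$ followed by telescoping, whereas the paper extracts an explicit infinite witness $\mathcal M\subset\mathcal N$ in one stroke. What the paper's version buys is brevity and the explicit subset; what yours buys is transparency about the quantitative mechanism: your observation that a fixed strict gap below $k+1$ is indispensable (merely $p_X(m)<(k+1)m$ infinitely often would leave your telescoped lower bound $p_X(m)\ge 1+(k+1)(m-N_0-1)$ compatible with the upper bound) is exactly the point hidden in the paper's step $\liminf_n(p_X(n)-(k+1)n)=-\infty$, which likewise uses the full strength of $\liminf_n p_X(n)/n\le k$ rather than a non-strict bound below $k+1$. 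Your side remark is also accurate: aperiodicity plays no role here in the paper's proof either; it enters only in the subsequent Lemma \ref{bound}.
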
	
For the sake of completeness, we reproduce the proof, which is contained in Theorem 2.2 of \cite{boshernitzan1984unique}.
\begin{proof}
	By \eqref{eq:linear growth}, we have 
	$$
	\liminf_n (p_X(n)-(k+1)n)=-\infty.
	$$
	It follows that
	$$
	\mathcal M =\left\{n\in \N: p_X(n+1)-(k+1)(n+1)\le \min\{0, \min_{1\le m\le n}\{p_X(m)-(k+1)m \} \}-1 \right\}
	$$
	is an infinite set. For any $n\in \mathcal M $, we have 
	$$
	p_X(n+1)-p_X(n)\le (k+1)(n+1)-(k+1)n-1=k.
	$$
	On the other hand, for any $n\in \mathcal M $, we get
	$$
 p_X(n+1)\le (k+1)(n+1)-1.
	$$
	This implies that $\mathcal M \subset \mathcal N$. Therefore, the set $\mathcal N$ is infinite.
\end{proof}

\begin{lem}[\cite{boshernitzan1984unique}, Lemma 4.1]\label{bound}
 For any $n\in \mathcal N$ and $m\ge (k+2)(n+1)$, any word $w\in \LL_m$ contains a subword in $Q_n$.
\end{lem}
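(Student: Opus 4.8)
The plan is to argue by contraposition: I will show that if a word $w\in\LL_m$ contains no factor in $Q_n$, then $m<(k+2)(n+1)$. Write $w=w_0\cdots w_{m-1}$ and, for $0\le t\le m-n$, set $u_t=w_t\cdots w_{t+n-1}\in\mathcal L_n(X)$. That $w$ avoids $Q_n$ means precisely that every $u_t$ has a single right extension in the language; since $w_{t+n}$ is such an extension whenever $t\le m-n-1$ (because $u_tw_{t+n}\in\mathcal L_{n+1}(X)$), the letter $w_{t+n}$ is \emph{forced} by $u_t$. Consequently $u_{t+1}$ is determined by $u_t$ (drop the first letter, append the forced one), so the finite sequence $u_0,u_1,\dots,u_{m-n}$ is a deterministic orbit in $\mathcal L_n(X)$.

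The key step is to show that the $u_t$ are pairwise distinct. Suppose instead $u_i=u_j$ with $i<j$, and put $p=j-i$. The factors $u_i,u_{i+1},\dots,u_{j-1}$ all occur in $w$, so each has a unique right extension, and together they close up into a cycle $u_i\to u_{i+1}\to\cdots\to u_{j-1}\to u_j=u_i$. Pick any $\tilde x\in X$ in which $u_i$ occurs, say $\tilde x_a\cdots\tilde x_{a+n-1}=u_i$. Since each word in the cycle admits only one right extension, an immediate induction gives $\tilde x_{a+t}\cdots\tilde x_{a+t+n-1}=u_{i+(t\bmod p)}$ for all $t\ge0$; in particular the right tail $\tilde x_{[a,\infty)}$ is periodic of period $p$. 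Letting $y$ be the bi-infinite $p$-periodic sequence with $y_l=\tilde x_{a+(l\bmod p)}$, one checks that $\sigma^{a+rp}(\tilde x)\to y$ as $r\to\infty$, so $y\in X$ because $X$ is closed and $\sigma$-invariant. This makes $y$ a periodic point of $X$, contradicting aperiodicity. Extracting an honest periodic point of $X$ from a single repeated factor is the heart of the argument, and the only place where aperiodicity is genuinely used.

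Hence $u_0,\dots,u_{m-n}$ are $m-n+1$ distinct elements of $\mathcal L_n(X)$, so $m-n+1\le p_X(n)$. Finally, since $n\in\mathcal N$ we have $p_X(n)\le p_X(n+1)<(k+1)(n+1)$, whence
\[ m\le p_X(n)+n-1<(k+1)(n+1)+n-1<(k+2)(n+1). \]
This contradicts $m\ge(k+2)(n+1)$ and proves the lemma. Note that the constant $(k+2)(n+1)$ is comfortably larger than what the counting forces; the slack comes from using only $p_X(n)<(k+1)(n+1)$ and not the sharper control $\sharp Q_n\le k$ also available for $n\in\mathcal N$.
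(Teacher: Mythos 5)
Your proof is correct and follows essentially the same route as the paper's: avoidance of $Q_n$ makes each $n$-block force the next letter, a repeated $n$-subword then produces a periodic point contradicting aperiodicity, and the count $p_X(n)\le p_X(n+1)<(k+1)(n+1)$ for $n\in\mathcal N$ closes the argument. You merely recast the contradiction as a contraposition and spell out the limit construction $\sigma^{a+rp}(\tilde x)\to y$ of the bi-infinite periodic point, a detail the paper leaves implicit with ``Therefore $X$ contains a periodic point.''
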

For the sake of completeness we provide a  proof here.
\begin{proof}
	We prove it by contradiction. Assume to the contrary that all $(m - n + 1)$ $n$-subwords of $w$ do not belong to $Q_n$. That means that each of these $n$-blocks determines uniquely the next
	letter. Since $m - n + 1 \ge 2(k+1)n > p_X(n)$, at least one $n$-word appears more than one time as a subword of $w$. Therefore $X$ contains a periodic point. This  contradicts our assumption.
\end{proof}
 
Now we show that any cylinder of length less than $n$ can be decomposed as the cylinders of elements in $ Q'_{n+1}$ after translations. 
\begin{lem}\label{dense2}
Let $n\in \mathcal N$. Any cylinder $[w]$ with length of $w$ less than $n$ may be written uniquely as a finite disjoint union of sets of  the form $\sigma^{p}[q'_{n+1}]$ with $p\in \mathbb N$, $q'_{n+1}\in Q'_{n+1}$, such that  $\sigma^{t} [q'_{n+1}]\cap [q_n]=\emptyset$ for any $0<t<p$ and any $q_n\in Q_n$. 
\end{lem}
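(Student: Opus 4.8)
The plan is to exploit two structural features of the subshift at the scales $n\in\mathcal N$: \emph{forward determinism} and \emph{syndeticity of right-special words}. For the first, note that if a word $u\in\mathcal L_n(X)$ does not lie in $Q_n$, then by definition it admits a \emph{unique} letter $a$ with $ua\in\mathcal L_{n+1}(X)$; hence, reading any $x=(x_m)_m\in X$ from left to right, the symbol $x_{j+n}$ is forced by the window $x_j\cdots x_{j+n-1}$ unless that window is right-special, i.e. unless $j\in R(x):=\{j\in\Z:\ x_j\cdots x_{j+n-1}\in Q_n\}$. For the second, Lemma~\ref{bound} together with aperiodicity gives that every word of length $\ge(k+2)(n+1)$ carries a factor in $Q_n$, so the occurrences $R(x)$ of right-special words are syndetic with gaps bounded by $(k+2)(n+1)$; in particular $R(x)$ is bi-infinite and has a largest element $\le -1$.

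First I would construct the decomposition (existence). For $x\in[w]$ set $-p(x)=\max\{j\in R(x):\ j\le -1\}$, so $p(x)\ge 1$, and $q'(x)=x_{-p(x)}\cdots x_{-p(x)+n}\in Q'_{n+1}$. Since $-p(x)$ is the last right-special occurrence before coordinate $0$, the next element of $R(x)$ sits at a position $\ge 0$, so the windows at $-p(x)+1,\dots,-1$ are non-special; by forward determinism they force every symbol from $-p(x)$ up to and beyond coordinate $0$, and as $|w|<n$ this already pins down $x_0\cdots x_{|w|-1}=w$. The key point is that this forced continuation depends only on $q'(x)$, hence is common to every $y\in\sigma^{p(x)}[q'(x)]$: such $y$ carries $q'(x)$ at positions $-p(x),\dots,-p(x)+n$, hence the same non-special windows on $(-p(x),0)$, hence $-p(x)=\max\{j\in R(y):j\le -1\}$ and $q'(y)=q'(x)$. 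Thus the level set $\{x\in[w]:\ (p(x),q'(x))=(p,q')\}$ is exactly $\sigma^{p}[q']$ and is contained in $[w]$. Letting $(p,q')$ range over the finitely many attained values (finiteness from the gap bound and $\sharp Q'_{n+1}<\infty$), the sets $\sigma^{p}[q']$ are pairwise disjoint (as $(p(x),q'(x))$ is a function of $x$) and cover $[w]$, yielding the desired finite disjoint decomposition.

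Next I would verify the side condition and uniqueness. For a term $\sigma^p[q']$ produced above, the windows at the relative positions $1,\dots,p-1$ after any occurrence of $q'$ are non-special (by determinism, since they are so for the point $x$ that produced the term); translating back, this says exactly that $\sigma^{t}[q'_{n+1}]\cap[q_n]=\emptyset$ for every $0<t<p$ and every $q_n\in Q_n$. The range $0<t<p$ is precisely calibrated: it leaves the relative position $p$, namely coordinate $0$, unconstrained, which is necessary because $0$ may itself be right-special -- this is the top of the induced Kakutani--Rohlin tower. For uniqueness, given any disjoint decomposition of $[w]$ into sets $\sigma^{p}[q']$ satisfying the condition, the condition forces the anchor $-p$ of the term containing a given $x$ to be a right-special occurrence with no right-special occurrence in $(-p,0)$ and with $p\ge 1$; such an anchor is necessarily the last right-special occurrence $\le -1$, so it equals $-p(x)$ and the decomposition coincides with the one built above.

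The main obstacle is the determinism bookkeeping of the second paragraph: one must check cleanly that the forced continuation of a special word $q'_{n+1}$ reaches strictly beyond coordinate $0$, so that both $w$ and the non-speciality of the intermediate windows are genuinely determined by $q'_{n+1}$ alone rather than by the individual point, and hence that the tower levels coincide \emph{exactly} with the cylinders $\sigma^{p}[q']$. Once this propagation is in hand, disjointness, covering, the side condition and uniqueness all follow formally; the syndeticity coming from Lemma~\ref{bound} is used only to guarantee that the decomposition is finite.
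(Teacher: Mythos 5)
Your proof is correct, and its skeleton is the same as the paper's: anchor each $x\in[w]$ at its most recent right-special occurrence, use the fact that an $n$-word outside $Q_n$ forces its next letter to identify each level set of the anchor with a shifted cylinder $\sigma^{p}[q'_{n+1}]$, and invoke Lemma \ref{bound} together with aperiodicity to bound $p$ and make the union finite. The genuine difference is the calibration of the anchor, and it matters. The paper's proof takes $K_x$ to be the largest $j<l=|w|$ with $x_{j-n+1}\cdots x_j\in Q_n$, which only guarantees that no right-special word ends in $(K_x,l-1]$; since the stated side condition $\sigma^{t}[q'_{n+1}]\cap[q_n]=\emptyset$ for $0<t<p$ says precisely that no right-special $n$-window starts at offsets $1,\dots,p-1$ after an occurrence of $q'_{n+1}$ (equivalently, ends at the absolute positions $n-p,\dots,n-2$ of a point of $\sigma^p[q'_{n+1}]$), the paper's terms can violate it whenever $l<n-1$ and the forced continuation meets a right-special window ending in $[l,n-2]$ --- and the written proof never checks the side condition or the uniqueness, stopping once the disjoint covering is produced. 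You anchor instead at the last right-special occurrence starting at a coordinate $\le -1$ (equivalently ending at $\le n-2$), which is exactly the calibration the range $0<t<p$ encodes; this is what makes your verification of the side condition (via the intrinsic forced continuation of $q'_{n+1}$) and your uniqueness argument (the side condition pins the anchor of the term containing a given $x$ to this last occurrence) go through, so your route in fact proves strictly more of the statement than the paper's argument does. One fine point to record: your uniqueness step tacitly uses that every term has $p\ge 1$ --- for $p=0$ the side condition is vacuous, and an admissible term $[q'_{n+1}]\subset[w]$ could in general be traded against the canonical ones --- so the lemma should be read with $p\ge 1$, which your construction satisfies automatically since the anchor lies strictly to the left of coordinate $0$.
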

Remark  that by Lemma \ref{bound} the integers $p$ belongs to  $[0,(k+2)(n+1)]$.

\begin{proof}
Let $[w]$ be a cylinder associated to a word $w\in \mathcal L_l(X)$ with $l<n$. For $x\in [w]$, 
we let $K_x\in \mathbb Z$ be the largest integer $j$ less than  $l$ such that $x_{j-n+1}\cdots x_j$ belongs to  $Q_n$.  Then the word $w_{n+1}^x=x_{K_x-n+1}\cdots x_{K_x+1}$ belongs to $Q'_{n+1}$. Observe also that  by Lemma \ref{bound} we have $l-1-K_x\leq (k+2)(n+1)$, thus $n-1-K_x\leq (k+3)(n+1)$. 
 Let $W_{n+1}$ be the collection of these words $w_{n+1}^x$ over $x\in [w]$. By definition of $K_x$ and $Q_n$ the word $w_{n+1}^x$ completely determines the $l-1-K_x$ next letters, that is to say,
 $$
 [w_{n+1}^x]=[x_{K_x-n+1}\cdots x_{l-1}].
 $$
 For $x, y\in [w]$ the sets  $\sigma^{n-1-K_x}[w_{n+1}^x]$ and $\sigma^{n-1-K_y}[w_{n+1}^y]$ are either disjoint or equal. 
  As $x$ belongs to $[w]$ we have  $x\in \sigma^{n-1-K_x}[w_{n+1}^x]\subset [w]$ and finally 
$$[w]=\coprod_{w_{n+1}^x\in W_{n+1}} \sigma^{n-1-K_x}w^x_{n+1}.$$
We complete the proof.
\end{proof}

\begin{proof}[Proof of Theorem \ref{mubo}]By (\ref{boc}) and the definition of $\mathcal N$ 
we have for $n\in \mathcal N$ :
\begin{align*}
\sharp Q'_{n+1}&=\sharp Q_n + p_X(n+1)-p_X(n),\\
&\le 2(p_X(n+1)-p_X(n)),\\
&\le 2k.
\end{align*}
For $n\in \mathcal N$ we let $F_n =\{\chi_{[q'_{n+1}]}, \ q'_{n+1}\in Q'_{n+1}\}$. By Lemma \ref{dense2},  any cylinder $[w]$ with length less than $n$ is a finite disjoint union of  $\sigma^{p}[q'_{n+1}]$. In particular  $\chi_{[w]}$ lies in $V_{F_n}$. 
We may therefore apply  Lemma \ref{cyclic} to $(F_n)_{n\in \mathcal N}$ and we get $$\M(X,\sigma)\leq \sup_{n\in \mathcal N}\sharp Q'_{n+1}\le 2k.$$ 
\end{proof}

\subsection{Multiplicity of invariant measures}
It follows from Theorem \ref{mubo} and Lemma \ref{compa} that any ergodic measure has (ergodic) multiplicity bounded by $2k$. 	 In fact we may refine this result as follows: 
\begin{thm}\label{last}
Let  $X$  be an aperiodic subshift with $\liminf_{n\to \infty}\frac{p_X(n)}{n}\le k$. Then 
$$\sum_{\mu\in \mathcal M_e(X,\sigma)}\M(\mu)\leq 2k.$$ 
\end{thm}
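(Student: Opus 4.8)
The plan is to bound each ergodic multiplicity separately by applying the generalized Baxter Lemma (Lemma \ref{cyclic}) inside the Hilbert space $L^2_0(\nu_i)$, and then to control the \emph{sum} of these bounds by splitting the budget $\sharp Q'_{n+1}\le 2k$ among the finitely many ergodic measures. Write $\mathcal M_e(X,\sigma)=\{\nu_1,\dots,\nu_p\}$, where $p\le k$ by Boshernitzan. Since $X$ is aperiodic each $\nu_i$ is non-atomic, so $L^2_0(\nu_i)\neq\{0\}$, and I let $U_i$ be the (unitary) Koopman operator of $\sigma$ on $L^2(\nu_i)$ and $P_i\colon L^2(\nu_i)\to L^2_0(\nu_i)$, $f\mapsto f-\int f\,d\nu_i$, the mean-zero projection, which commutes with $U_i$.

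First I would record why the \emph{full} family works. For $n\in\mathcal N$, Lemma \ref{dense2} writes every cylinder $[w]$ of length less than $n$ as a finite disjoint union of translates $\sigma^{p}[q']$ with $q'\in Q'_{n+1}$; since $U_i^{-p}\chi_{[q']}=\chi_{\sigma^p[q']}$ and $P_i$ commutes with $U_i$, each $P_i\chi_{[w]}$ lies in the cyclic space of $\{P_i\chi_{[q']}\colon q'\in Q'_{n+1}\}$. As the mean-zero cylinder functions are dense in $L^2_0(\nu_i)$, the families $\mathcal F_n^{(i)}=\{P_i\chi_{[q']}\colon q'\in Q'_{n+1}\}$ verify the hypothesis of Lemma \ref{cyclic}, which only recovers $\M(\nu_i)\le\sharp Q'_{n+1}\le 2k$ (i.e.\ Lemma \ref{compa}). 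The improvement comes from using \emph{disjoint} subfamilies for the different $\nu_i$.

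The key step is the budget splitting. For $q'\in Q'_{n+1}$ I assign $q'$ to an index $i(q')$ maximizing $\nu_i([q'])$ (assigning nothing if all these masses vanish), and set $R_i(n)=\{q'\in Q'_{n+1}\colon i(q')=i\}$. The $R_i(n)$ are pairwise disjoint, so $\sum_i\sharp R_i(n)\le\sharp Q'_{n+1}\le 2k$. By mutual singularity of the $\nu_i$ together with the fact that the level-$n$ cylinders generate the Borel $\sigma$-algebra, the pairwise overlaps $\sum_{q'\in Q'_{n+1}}\min\bigl(\nu_i([q']),\nu_j([q'])\bigr)$ tend to $0$ as $n\to\infty$ for $i\neq j$. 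Consequently the mass placed by $\nu_i$ on branches assigned to other measures, namely $\eta_i(n)=\sum_{q'\notin R_i(n)}\nu_i([q'])\le\sum_{j\neq i}\sum_{q'}\min(\nu_i([q']),\nu_j([q']))$, tends to $0$ along $\mathcal N$. Using $\sigma$-invariance of $\nu_i$, dropping these branches from the decomposition of Lemma \ref{dense2} alters each $P_i\chi_{[w]}$ by an $L^2(\nu_i)$-error at most $\sqrt{\eta_i(n)}$, which vanishes. Hence the thinned families $\{P_i\chi_{[q']}\colon q'\in R_i(n)\}$ still satisfy the hypothesis of Lemma \ref{cyclic} in $L^2_0(\nu_i)$. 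Passing to a subsequence of $\mathcal N$ along which each integer $\sharp R_i(n)$ stabilizes at a value $m_i$, Lemma \ref{cyclic} gives $\M(\nu_i)\le m_i$, and therefore $\sum_i\M(\nu_i)\le\sum_i m_i\le 2k$.

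I expect the disjoint assignment to be the main obstacle. A single branch word may carry positive mass for several ergodic measures, so the naive per-measure count $\sum_i\sharp\{q'\colon\nu_i([q'])>0\}$ can far exceed $\sharp Q'_{n+1}$; only mutual singularity forces the overlaps to be asymptotically negligible along $\mathcal N$, so that the $\mathrm{argmax}$ assignment deprives no measure of more than a vanishing fraction of its mass. The technical heart is precisely to check that this vanishing fraction does not obstruct density in $L^2_0(\nu_i)$, i.e.\ that the thinned families still fulfil the approximation hypothesis of Lemma \ref{cyclic}; this is where the $L^2$ (rather than uniform) nature of the ergodic setting is essential, since a small-measure set contributes a small $L^2$-error but not a small supremum-norm error.
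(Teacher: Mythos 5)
Your budget-splitting strategy has the right general shape (the paper also partitions $Q'_{n+1}$ into disjoint subfamilies, one per ergodic measure, and applies Lemma \ref{cyclic} inside each $L^2_0(\mu_i)$), but the error estimate at the heart of your thinning step is wrong, and the gap is exactly the main technical difficulty of the theorem. In the decomposition of Lemma \ref{dense2}, a single branch word $q'\in Q'_{n+1}$ does not occur once: it occurs with up to $(k+2)(n+1)$ distinct translates $\sigma^{p}[q']$ (see the remark following Lemma \ref{dense2}). By $\sigma$-invariance each translate has $\nu_i$-mass $\nu_i([q'])$, so dropping the branches assigned to other measures removes a set of $\nu_i$-mass up to $(k+2)(n+1)\,\eta_i(n)$, not $\eta_i(n)$; the resulting $L^2(\nu_i)$-error is $\bigl((k+2)(n+1)\,\eta_i(n)\bigr)^{1/2}$ rather than $\sqrt{\eta_i(n)}$. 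Mutual singularity gives $\eta_i(n)\to 0$ with no rate whatsoever, so you cannot conclude $n\,\eta_i(n)\to 0$, and your thinned families need not satisfy hypothesis (\ref{baxtou}) of Lemma \ref{cyclic}.

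This is precisely what the paper's extra machinery supplies. Lemma \ref{dense3} shows that when the minimal period word $v(q')$ of $q'$ is shorter than $n+1$, the translates in the decomposition satisfy $p\le |v(q')|$, so the multiplicity of $q'$ is controlled by $|v(q')|$ instead of by $n$; and Lemma \ref{keylem} (built on Lemma \ref{int}, which shows the empirical measures $\nu_{v_n}$ of the period words converge to the same ergodic limit $\nu$ as $\nu_{w_n}$) provides the quantitative replacement for your appeal to mutual singularity: $|v(q'_n)|\,\mu([q'_n])\to 0$ for every ergodic $\mu\neq\nu$. This also forces a different assignment rule from yours: the paper's subfamilies $J_{n,i}$ are defined by which ergodic measure the empirical measures $\nu_{q'}$ converge to along the subsequence (Boshernitzan's scheme, grouped through the index sets $I_i$), not by an argmax of the masses $\nu_i([q'])$. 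Lemma \ref{keylem} only controls $|v|\,\mu([q'])$ for measures $\mu$ \emph{different from the empirical limit} of $q'$, so under your argmax assignment a word whose empirical limit is $\mu_i$ could be assigned to another bucket with no vanishing bound on the resulting $\nu_i$-error. To repair the proof, replace the argmax assignment and the singularity estimate by the empirical-limit assignment together with Lemmas \ref{int}, \ref{keylem} and \ref{dense3}; the rest of your outline (disjointness of the subfamilies, smallness in $L^2$ of the dropped sets, passing to a subsequence, summing the Baxter bounds) then matches the paper's argument.
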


In order to prove Theorem \ref{last}, we first recall some notations and then show two lemmas for general aperiodic subshifts. We have  learned from the referee that some parts of our proofs overlap with results in  \cite{creutz2023measuretheoretically}\cite{espinoza2023structure}. 
Let $(Y,\sigma)$ be an aperiodic subshift. For two finite words $w$ and $v$, we denote by $N(w|v)$ the number of times that $w$ appears as a subword of $v$. Also, We define $d(w|v)=N(w|v)/|v|$. For a generic point $x$ of a measure $\mu$, we have
$$
\lim_{n\to \infty} d(w|x_1^n) =\mu ([w]),
$$
where $x_1^n=x_1x_2\dots x_n$. For a finite word $v$, we denote by $v^{\otimes m}=\underbrace{vv \dots v}_{m \text{ times}}$.  For a finite word $w$, we denote by 
$$
\nu_w=\frac{1}{|w|}\sum_{k=0}^{|w|-1} \delta_{\sigma^k(\bar{w})},
$$
where $|w|$ is the length of $w$ and $\bar{w}\in \mathcal{A}^\Z$ is the periodization of $w$, i.e. $w^{\otimes \infty}$. \\

Let $w_n$ be a word of length $n$.  
Foy any $n$, we put 
$\ell_{n}=\ell(w_n):= \min \{1\le \ell<n: [w_{n}] \cap \sigma^\ell([w_{n}])\not=\emptyset \}$ and $L_{n}:=1+\sharp \{1\le \ell<n: [w_{n}] \cap \sigma^\ell([w_{n}])\not=\emptyset \}$, with the convention $\min\emptyset =n$. Let $v_{n}=v(w_n)$ be the first $\ell_{n}$-subword of $w_{n}$. It follows that $w_{n}= v_{n}^{\otimes K_{n}}\hat v_{n}$ with $\hat v_{n}\neq v_{n}$ being a prefix  of $v_{n}$. Then  $K_{n}=\lfloor n/\ell_{n} \rfloor\geq L_{n}$.  Observe  that for any $x\in X$, $p\geq \ell_{n}$ and any word $u$ of length less than $\ell_{n}$ we have 
\begin{align}\label{mdr}
N(u|x_1^p)&\geq  N(u|w_{n}) \frac{N(w_{n}|x_1^p)}{L_{n}},\\ \nonumber
&\geq N(u|v_{n}) K_{n} \frac{N(w_{n}|x_1^p)}{L_{n}},\\ \nonumber
&\geq  N(u|v_{n}) N(w_{n}|x_1^p).
\end{align}

In the next two lemmas we assume that 
\begin{itemize}
    \item the subshift $(Y, \sigma)$ is aperiodic;
    \item $w_n\in \mathcal{L}_n(Y)$ for $n\in \N$;
    \item $\nu_{w_n} \xrightarrow[n\in \mathcal{N}]{n\to +\infty}\nu \in \mathcal{M}_e(Y, \sigma)$, i.e. $\nu_{w_n}$ is weakly converging to an ergodic measure $\nu$ when $n$ goes to infinity along   a  subsequence $\mathcal{N}$.
\end{itemize}

\begin{lem}\label{int}
Under the above assumption, we have
$$\ell_{n}=|v_{n}|\xrightarrow[n\in \mathcal{N}]{n\to +\infty}+\infty$$
and $$\nu_{v_{n}}\xrightarrow[n\in \mathcal{N}]{n\to +\infty}\nu.$$
\end{lem}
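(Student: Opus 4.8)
The plan is to treat the two assertions separately: first to establish $\ell_n\to+\infty$ using only aperiodicity, and then to prove $\nu_{v_n}\to\nu$ by comparing the empirical measure $\nu_{w_n}$ with $\nu_{v_n}$ and invoking the extremality (ergodicity) of $\nu$.

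For the divergence $\ell_n\to+\infty$, I argue by contradiction. If $\ell_n$ stayed bounded along some subsequence of $\mathcal N$, then since there are only finitely many words of bounded length over the finite alphabet $\mathcal A$, I could pass to a further subsequence along which $v_n=v$ is a fixed word of fixed length $\ell$. Because $w_n=v_n^{\otimes K_n}\hat v_n$ with $K_n=\lfloor n/\ell_n\rfloor\to+\infty$, the word $v^{\otimes m}$ would belong to $\mathcal{L}(Y)$ for every $m$, so the periodic point $\bar v=v^{\otimes\infty}$ would lie in the closed shift-invariant set $Y$. This contradicts the aperiodicity of $(Y,\sigma)$, whence $\ell_n=|v_n|\to+\infty$ along $\mathcal N$.

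For the convergence $\nu_{v_n}\to\nu$, I fix a word $u$; since $\ell_n\to\infty$ we have $|u|<\ell_n$ for $n$ large, and I compare cyclic occurrence counts through the decomposition $w_n=v_n^{\otimes K_n}\hat v_n$. Writing $\pi_n$ for the empirical measure of the prefix $\hat v_n$ and counting occurrences of $u$ copy by copy (the cyclic count in each full copy of $v_n$ being exactly $\ell_n\,\nu_{v_n}([u])$, with all junction and wrap-around corrections telescoping to a single $O(|u|)$ term), I obtain
$$\nu_{w_n}([u])=\frac{K_n\ell_n\,\nu_{v_n}([u])+|\hat v_n|\,\pi_n([u])+O(|u|)}{K_n\ell_n+|\hat v_n|}.$$
Now I let $n\to\infty$ along a subsequence of $\mathcal N$ chosen so that $\nu_{v_n}\to\nu'$, $\pi_n\to\pi$, $K_n\to K_\infty\in\mathbb{N}\cup\{\infty\}$ and $|\hat v_n|/\ell_n\to b\in[0,1]$. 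Here $\nu'$ is automatically $\sigma$-invariant, each $\nu_{v_n}$ being a periodic-orbit measure, while $\pi$ is invariant whenever it matters, since $|\hat v_n|\sim b\,\ell_n\to\infty$ as soon as $b>0$. Because $\nu_{w_n}\to\nu$, dividing numerator and denominator by $\ell_n$ and passing to the limit gives $\nu=\frac{K_\infty}{K_\infty+b}\nu'+\frac{b}{K_\infty+b}\pi$ when $K_\infty<\infty$, and $\nu=\nu'$ when $K_\infty=\infty$ or $b=0$ (in which cases the $\pi$-term carries vanishing weight). In every case $\nu$ is exhibited as a convex combination of invariant probability measures in which $\nu'$ has weight $\frac{K_\infty}{K_\infty+b}\ge\frac12>0$, so the extremality of the ergodic measure $\nu$ forces $\nu'=\nu$. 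Since every subsequential weak-$*$ limit of $(\nu_{v_n})$ equals $\nu$, we conclude $\nu_{v_n}\to\nu$.

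The main obstacle is exactly the regime where $K_n$ does not tend to infinity, i.e. where $\ell_n$ is of order $n$: then $w_n$ covers only boundedly many periods of $\bar v_n$, so $\nu_{w_n}$ and $\nu_{v_n}$ need not be asymptotically equal and a naive comparison breaks down. The remedy is to keep the prefix contribution $\pi_n$ explicit and to read the limiting identity as a genuine convex decomposition of $\nu$, letting ergodicity do the work. The one computational point that must be checked with care is that the occurrence-count errors telescope to $O(|u|)$ rather than $O(K_n|u|)$, which is precisely what makes them negligible after dividing by $n$ even when $K_n$ is large.
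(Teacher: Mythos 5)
Your proposal is correct and takes essentially the same route as the paper: the first part is the paper's aperiodicity/pigeonhole argument verbatim, and your exact identity $\nu_{w_n}([u])=\bigl(K_n\ell_n\,\nu_{v_n}([u])+|\hat v_n|\,\pi_n([u])+O(|u|)\bigr)/\bigl(K_n\ell_n+|\hat v_n|\bigr)$ is a bookkeeping refinement of the paper's one-line inequality $d(u|w_{n})\geq \frac{K_{n}}{K_{n}+1}\,d(u|v_{n})\geq \frac{1}{2}d(u|v_{n})$, which in the limit gives $\nu\geq\frac{1}{2}\nu'$ and hence exactly the weight-at-least-$\frac{1}{2}$ convex decomposition you exploit before invoking extremality of the ergodic measure $\nu$. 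In particular the regime you single out as the main obstacle (bounded $K_n$, i.e.\ $\ell_n$ of order $n$) is already handled uniformly by the paper's factor $K_n/(K_n+1)\geq\frac{1}{2}$, so your case analysis on $K_\infty$ and $b$ is sound but not needed.
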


\begin{proof} We argue by contradiction. Assume $(\ell_{n})_{n\in \N}$ has a bounded infinite subsequence $\mathcal{N}'$ of $\mathcal{N}$. Then there are finite words $v$ and $\hat v_n$ with $|\hat v_n|<|v|$ such that $w_{n}= v^{\otimes K_{n}}\hat v_n$ for $n\in \mathcal{N}''$ where $\mathcal{N}''$ is some infinite subsequence of $\mathcal{N}'$. Observe firstly that the length of $w_n$ goes to infinity. 
As a  consequence,  $K_n$ goes also to infinity as $n$ goes to infinite along $\mathcal{N}''$. But then $X$ should  contain  the periodic point $\overline{v}$ associated to $v$ which is a contradiction to the aperiodicity of $(Y, \sigma)$.  Therefore $\ell_{n}\xrightarrow[n\in \mathcal{N}]{n\to +\infty}+\infty$.

Let us check now that 
$\nu_{v_{n}}\xrightarrow[n\in \mathcal{N}]{n\to +\infty}\nu$. Let $\nu'=\lim_{k\to \infty}\nu_{v_{n_k}}$ be a weak limit of $(\nu_{v_{n}})_{n\in \mathcal{N}}$ with a subsequence $(n_k)_{k\in \N}$ of $\mathcal{N}$. For any word $u$ with $|u|<\ell_{n}$, by \eqref{mdr} we have  
$$
N(u|w_{n})\geq  N(u|v_{n})K_{n},
$$
and consequently
\begin{eqnarray*}
d(u|w_{n})\geq &d(u|v_{n}) \frac{K_{n}|v_{n}|}{(K_{n}+1)|v_{n}|},\\
\geq &\frac{1}{2}d(u|v_{n}). 
\end{eqnarray*}
For any cylinder $[u]$, By letting $n$ got infinity we get that
$$\nu([u])\geq \frac{1}{2}\nu'([u]).$$
It implies that $\nu-\frac{1}{2} \nu'$ is an $\sigma$-invariant measure. It follows from the  ergodicity of $\nu$ that $\nu=\nu'$. 
\end{proof}

\begin{lem}\label{keylem}For any ergodic measure $\mu\not=\nu$, we have  $$\lim_{n \in \mathcal N, n\to \infty} |v_{n}| \mu([w_{n}])=0.$$
\end{lem}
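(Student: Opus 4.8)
The plan is to exploit the word–frequency comparison already recorded in \eqref{mdr}, together with the convergence $\nu_{v_n}\to\nu$ and $\ell_n\to\infty$ from Lemma \ref{int}, and then to play the ergodicity of $\mu$ against that of $\nu$. The point is that $w_n$ is essentially $v_n$ repeated $K_n\ge L_n$ times, so the statistics of $w_n$ are governed by $\nu_{v_n}$, which is close to $\nu$; seeing $w_n$ under the $\mu$-atypical measure must therefore be rare.

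First I fix a word $u$ with $\nu([u])>0$ and a $\mu$-generic point $x$. Since $\ell_n=|v_n|\to\infty$ along $\mathcal N$, we have $|u|<\ell_n$ for all large $n\in\mathcal N$, so the last line of \eqref{mdr} applies and gives $N(u|x_1^p)\ge N(u|v_n)\,N(w_n|x_1^p)$ for every $p$. Dividing by $p$ and letting $p\to\infty$, the pointwise ergodic theorem for $\mu$ yields
\[
\mu([u])\ \ge\ N(u|v_n)\,\mu([w_n]).
\]
Comparing the linear count $N(u|v_n)$ with the cyclic count defining $\nu_{v_n}$ gives $N(u|v_n)\ge \ell_n\,\nu_{v_n}([u])-|u|$, the error $|u|$ accounting for the occurrences of $u$ that wrap around one period. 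Since $\nu_{v_n}([u])\to\nu([u])>0$, the quantity $\ell_n\nu_{v_n}([u])-|u|$ is positive for large $n$, whence
\[
\ell_n\,\mu([w_n])\ \le\ \frac{\ell_n\,\mu([u])}{\ell_n\,\nu_{v_n}([u])-|u|}\ =\ \frac{\mu([u])}{\nu_{v_n}([u])-|u|/\ell_n}.
\]
Letting $n\to\infty$ along $\mathcal N$ and using $\nu_{v_n}([u])\to\nu([u])$ and $\ell_n\to\infty$, I obtain $\limsup_{n\in\mathcal N}\ell_n\,\mu([w_n])\le \mu([u])/\nu([u])$ for every word $u$ with $\nu([u])>0$.

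It remains to make the right-hand side arbitrarily small by a good choice of $u$, and this is where I use that two distinct ergodic measures $\mu\neq\nu$ are mutually singular. For every $\epsilon>0$ there is a finite disjoint union of cylinders $C=\bigsqcup_i[u_i]$ with $\nu(C)>1-\epsilon$ and $\mu(C)<\epsilon$; discarding the $u_i$ with $\nu([u_i])=0$ we may assume $\nu([u_i])>0$ for all $i$. By the mediant inequality,
\[
\min_i\frac{\mu([u_i])}{\nu([u_i])}\ \le\ \frac{\sum_i\mu([u_i])}{\sum_i\nu([u_i])}\ =\ \frac{\mu(C)}{\nu(C)}\ <\ \frac{\epsilon}{1-\epsilon},
\]
so there is a single word $u$ with $\nu([u])>0$ and $\mu([u])/\nu([u])<\epsilon/(1-\epsilon)$. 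Combining this with the previous bound gives $\limsup_{n\in\mathcal N}\ell_n\,\mu([w_n])\le \epsilon/(1-\epsilon)$, and letting $\epsilon\to0$ yields $\lim_{n\in\mathcal N}|v_n|\,\mu([w_n])=0$, as desired.

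The conceptual crux is this last step. The frequency comparison only bounds $\ell_n\mu([w_n])$ by the fixed ratio $\mu([u])/\nu([u])$, which for a single $u$ need not be small; it is the mutual singularity of the distinct ergodic measures $\mu$ and $\nu$, repackaged through the mediant inequality into the existence of a single cylinder on which the $\mu$-to-$\nu$ mass ratio is tiny, that upgrades the uniform bound into convergence to $0$. The remaining points are routine: the boundary estimate $N(u|v_n)\ge \ell_n\nu_{v_n}([u])-|u|$ and the passage $p\to\infty$ via the ergodic theorem are both harmless because $u$ is held fixed while $\ell_n\to\infty$.
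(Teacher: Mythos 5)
Your proof is correct, and its engine is the same as the paper's: the frequency comparison \eqref{mdr} applied to a $\mu$-generic point, combined with $\ell_n\to\infty$ and $\nu_{v_n}\to\nu$ from Lemma \ref{int}. Where you genuinely diverge is the closing step. The paper argues by contradiction: it extracts a subsequence along which $|v_n|\mu([w_n])\to b>0$, deduces $\mu([u])\ge \frac{b}{4}\nu([u])$ for every cylinder $[u]$, and concludes that $\mu-\frac{b}{4}\nu$ is a nonnegative invariant measure, contradicting the extremality (ergodicity) of $\mu$. You instead run the estimate directly and quantitatively: $\limsup_{n\in\mathcal N}|v_n|\,\mu([w_n])\le \mu([u])/\nu([u])$ for each cylinder $u$ with $\nu([u])>0$ (your cyclic-versus-linear correction $N(u|v_n)\ge \ell_n\nu_{v_n}([u])-|u|$ is sound, since at most $|u|-1$ occurrences wrap around the period, and $\nu_{v_n}([u])\to\nu([u])$ holds because cylinders are clopen in the full shift where the $\nu_{v_n}$ live), and you then drive the right-hand side to zero using mutual singularity of distinct ergodic measures together with the mediant inequality, after approximating the singularity set by a clopen union of cylinders. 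The two finishes are equivalent in substance --- ``no inequality $\mu\ge c\nu$ with $c>0$'' and ``$\inf_u \mu([u])/\nu([u])=0$'' are both expressions of extremality, i.e.\ mutual singularity, of ergodic measures --- but yours buys a subsequence-free argument with an explicit bound and none of the factor-of-$2$ and factor-of-$4$ bookkeeping in the paper, at the price of invoking regularity and clopen approximation to convert mutual singularity into a single cylinder with small $\mu$-to-$\nu$ mass ratio. One cosmetic point: your clopen set decomposes into two-sided cylinders rather than cylinders based at coordinate $0$, but shift-invariance of $\mu$ and $\nu$ makes the mediant computation go through unchanged, since it only uses the sums $\sum_i\mu([u_i])$ and $\sum_i\nu([u_i])$, and disjointness is not needed there.
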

\begin{proof}
 Assume $\limsup_{n \in \mathcal N, n\to \infty} |v_{n}| \mu([w_{n}])>0.$ By passing to an infinite subsequence $\mathcal{N}'$ of $\mathcal{N}$ we have $\lim_{n\in \mathcal{N}', n\to \infty} |v_{n}| \mu([w_{n}])=b>0 $. By Lemma \ref{int} the sequence  $\nu_{v_{n}}$, $n\in \mathcal N'$, is converging to the measure $\nu$. 

 Let $x$ be a generic point of $\mu$. Then we have  for any $n$ 
\begin{equation}\label{eq11}
\lim_{p\to \infty} \frac{1}{p} \sum_{\ell=0}^{p-1} \chi_{[w_{n}]}(\sigma^\ell(x))= \lim_{p\to \infty} \frac{ N(w_{n}|x_1^p)}{p}=\mu([w_{n}]).
\end{equation}
In particular for any $n$ we can choose $P_n\in \N$ such that for $p\geq P_n$ 
\begin{equation}\label{eq33}
 \frac{ N(w_{n}|x_1^p)}{p}\geq \frac{\mu([w_{n}])}{2}.
\end{equation}

Pick an arbitrary cylinder $[u]$. by Lemma \ref{int} there exists an integer $N$ such that for $n>N$ we have
\begin{equation}\label{eq22}
N(u|v_{n})\ge \frac{1}{2} \nu([u]) |v_{n}|.
\end{equation}
It follows from (\ref{mdr}) that 
$$d(u|x^p_1)\geq \frac{1}{4}\nu([u])|v_{n}|\mu(w_{n}).$$
By letting $p$, then $n\in \mathcal N'$  go to infinity, we get for any cylinder $[u]$
$$\mu([u])\geq \frac{b}{4}\nu([u]).$$
It implies that $\mu-\frac{b}{4} \nu$ is an $\sigma$-invariant measure which is a contradiction to the ergodicity of $\mu$.
\end{proof}

We  recall now briefly  the proof of Boshernitzan that an aperiodic subshift of linear growth has finite many ergodic measures. Let $\mathcal{N}$ be the infinite set as in Lemma \ref{lem:R}. For any $n\in \mathcal{N}$, one can choose (not uniquely) an ordered $k$-tuple of $n$-words $K_n:=\{q_{n,1}, \dots, q_{n,k}\}$ which coincides with $Q_n$. By passing to a subsequence $\mathcal{N}'$ of $\mathcal{N}$, we can make each of the sequences of $\nu_{q_{n,i}}$ weakly convergences to some measures $\mu_i \in \mathcal{M}(X,T)$. Boshernitzan showed that 
$$
\mathcal{M}_e(X,T) \subset \{\mu_1, \mu_2, \dots, \mu_k \}.
$$
Since $\mu_i$ may coincide with the other $\mu_j$ for $j\not=i$, we define $I_i=\{1\le j\le k: \mu_j=\mu_i  \}$.\\

We will use the following complement  of Lemma \ref{dense2}.

\begin{lem} \label{dense3}In the decomposition  of a cylinder $[w]$ given by Lemma \ref{dense2}, for  any term  $\sigma^p[q'_{n+1}]$ with $|v(q'_{n+1})|<n+1$ we have $p\leq |v(q'_{n+1})|$. 
\end{lem}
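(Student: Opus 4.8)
The plan is to exploit the extra separation clause that is built into the decomposition of Lemma \ref{dense2}: for a term $\sigma^p[q'_{n+1}]$ one has $\sigma^t[q'_{n+1}]\cap[q_n]=\emptyset$ for every $0<t<p$ and every $q_n\in Q_n$. The guiding idea is that the hypothesis $|v(q'_{n+1})|<n+1$ forces a genuine self-overlap of $q'_{n+1}$ realized inside $X$, and that this self-overlap produces exactly one of the intersections that the separation clause forbids below level $p$. Matching the overlap length against the forbidden range $(0,p)$ will immediately yield $p\le|v(q'_{n+1})|$.

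First I would unwind the definitions. Set $\ell=|v(q'_{n+1})|=\ell(q'_{n+1})$ and recall that $\ell=\min\{1\le j<n+1:\ [q'_{n+1}]\cap\sigma^j[q'_{n+1}]\neq\emptyset\}$, with the convention that the minimum of the empty set equals $n+1$. The assumption $\ell<n+1$ means precisely that this minimum is attained, so there is an actual point $y\in X$ with $y\in[q'_{n+1}]\cap\sigma^\ell[q'_{n+1}]$. Next I would use the structure of $Q'_{n+1}$: by definition $q'_{n+1}=q_n a$, where $q_n\in Q_n$ is its length-$n$ prefix and $a\in\mathcal A$. Consequently $[q'_{n+1}]\subset[q_n]$, and therefore
$$[q_n]\cap\sigma^\ell[q'_{n+1}]\supset[q'_{n+1}]\cap\sigma^\ell[q'_{n+1}]\ni y,$$
so this intersection is nonempty for the specific right-special word $q_n$.

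Finally I would combine the two facts. The separation clause of Lemma \ref{dense2}, applied to the term $\sigma^p[q'_{n+1}]$ and to the right-special word $q_n\in Q_n$, asserts $\sigma^t[q'_{n+1}]\cap[q_n]=\emptyset$ for all $0<t<p$. Since $\ell\ge1$ and we have just shown $\sigma^\ell[q'_{n+1}]\cap[q_n]\neq\emptyset$, the value $t=\ell$ cannot lie in the open interval $(0,p)$; hence $\ell\ge p$, which is the desired bound $p\le|v(q'_{n+1})|$. I do not expect a serious obstacle here, and this is precisely the step where a more naive approach bogs down: one could instead try to track the deterministic continuation of the pattern to the right of the last right-special occurrence and argue that periodicity keeps that occurrence close to the end of $w$, but this is cumbersome. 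The separation clause already recorded in Lemma \ref{dense2} short-circuits all of it, so the only genuine care needed is bookkeeping—checking that $|v(q'_{n+1})|<n+1$ encodes an overlap realized by an honest point of $X$ (providing the witness $y$), and that the length-$n$ prefix of a word in $Q'_{n+1}$ is exactly its underlying word of $Q_n$.
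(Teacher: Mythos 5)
Your proposal is correct and takes essentially the same approach as the paper: both arguments use the fact that $\ell=|v(q'_{n+1})|<n+1$ yields an attained self-overlap $[q'_{n+1}]\cap\sigma^{\ell}[q'_{n+1}]\neq\emptyset$, enlarge via the prefix inclusion $[q'_{n+1}]\subset[q_n]$ with $q_n\in Q_n$, and contradict the separation clause of Lemma \ref{dense2}. The only cosmetic difference is that the paper first applies $\sigma^{p-\ell}$ and uses the clause in the shifted form $\sigma^{p}[q'_{n+1}]\cap\sigma^{p-\ell}[q_n]=\emptyset$, whereas you invoke the clause directly at $t=\ell$.
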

\begin{proof}
We argue  by contradiction. To simplify the notations we write $v_{n}=v(q'_{n+1})$. Assume $|v_n|<n+1$ and $p> |v_{n}|$. By definition of $v_n$ we have  
$$
\emptyset\not= \sigma^{p}[q'_{n+1}] \cap \sigma^{p-|v_n|}[q'_{n+1}].
$$ But it follows from Lemma \ref{dense2} that  $\sigma^{p}[q'_{n+1}]$  does not intersect $\sigma^{l}\left(\bigcup_{q_n\in Q_n}[q_{n}]\right)$ for $0<   l< p $, therefore with $q_n\in Q_n$ being the prefix of $q'_{n+1}$ we get the contradiction 
$$ \sigma^{p}[q'_{n+1}] \cap \sigma^{p-|v_n|}[q'_{n+1}]\subset \sigma^{p}[q'_{n+1}] \cap \sigma^{p-|v_n|}[q_{n}]=\emptyset.$$
Thus we have $p\leq |v(q'_{n+1})|$.
\end{proof}

For a given $i$ we let $(q^l_{n,i})_{l\in \mathcal{Q}_{n,i}}$ be the elements of $Q'_{n+1}$ with prefix $q_{n,i}$, where $\mathcal{Q}_{n,i}$ is a subset of $\mathcal{A}$ for each $n\in \mathcal{N}', 1\le i\le k$. Note that $\nu_{q^l_{n,i}}$ is also converging to $\mu_i$ for any $l$ when $n$ goes to infinity along $\mathcal{N}'$. Finally we let  $v_{n,i}^l=v_n(q^l_{n,i})$ for each $n\in \mathcal{N}', 1\le i\le k$ and $l\in \mathcal{Q}_{n,i}$.

\begin{proof}[Proof of Theorem \ref{last}]
	Pick an arbitrary cylinder $[w]$.  Let 
	$$
	[w]=\coprod_{j,l} \coprod_{\in P_{n,j}^l} \sigma^{p}[q_{n,j}^l],
	$$  be the decomposition of $[w]$ given by Lemma \ref{dense2}.  Recall that  
  $P_{j,l}$ is a subset of $ [0,(k+2)(n+1)]$ for any $l$ and by Lemma \ref{dense3} we have also $P_{j,l}\subset [0,|v^{l}_{n,j}|-1]$ if $|v^{l}_{n,j}|<n+1$.  For each $n\in \mathcal{N}'$, we decompose $\{(j,l):1\le j\le k, l\in \mathcal{Q}_{n,j}\}$ into three set $J_{n,i}$, $J_{n,i}'$ and $J_{n,i}''$, where $J_{n,i}:=\{(j,l):j\in I_i\}$ , $J_{n,i}':=\{(j,l): j\notin I_i, |v^{l}_{n,j}|=n+1  \}$ and $J_{n,i}''$ is the rest. Then for $(j,l)\in J_{n,i}'$ we have
	\begin{equation}\label{eq666}
	\mu_i\left(\coprod_{p\in P_{n,j}^l} \sigma^{p}[q_{n,j}^l]\right)\le (k+2)(n+1) \mu_i([q_{n,j}^l]) = (k+2) |v^{l}_{n,j}|\mu_i([q_{n,j}^l]).
	\end{equation}
	On the other hand, for $(j,l)\in J_{n,i}''$, we have
	\begin{equation}\label{eq777}
	\mu_i\left(\coprod_{p\in P_{n,j}^l} \sigma^{p}[q_{n,j}^l]\right)\le |v^{l}_{n,j}|\mu_i([q_{n,j}^l]).
	\end{equation}
	By summing up \eqref{eq666} and \eqref{eq777}, we have 
	\begin{equation}
	\mu_i\left(\coprod_{(j,l)\in J_{n,i}'\cup J_{n,i}''} \coprod_{p\in P_{n,j}^l} \sigma^{p}[q_{n,j}^l]\right)\le 2k(k+2) \sum_{(j,l)\in J_{n,i}'\cup J_{n,i}''}  |v^{l}_{n,j}|\mu_i([q_{n,j}^l]).
	\end{equation}
	Combining this with Lemma \ref{keylem}, we obtain  	
	$$
	\lim_{n\to \infty}\mu_i\left(\coprod_{(j,l)\in J_{n,i}'\cup J_{n,i}''} \coprod_{p\in P_{n,j}^l} \sigma^{p}[q_{n,j}^l]\right)=0.
	$$
	Therefore we have
\begin{eqnarray*}
	\left\|\chi_{[w]}-\sum_{(n,j)\in J_{n,i}, \ p\in P_{n,j}^l} \chi_{[q_{n,j}^l]}\circ \sigma^{-p}\right\|^2_{L^2(\mu_i)}&
	= \left\|\chi_{[w]}-\chi_{\coprod_{(n,j)\in J_{n,i}, \ p\in P_{n,j}^l} \sigma^p[q_{n,j}^l]}\right\|^2_{L^2(\mu_i)},\\
&= \mu_i\left(\coprod_{(j,l)\in J_{n,i}'\cup J_{n,i}''} \coprod_{p\in P_{n,j}^l} \sigma^{p}[q_{n,j}^l]\right)\xrightarrow[n\in \mathcal{N}']{n\to +\infty}0.
	\end{eqnarray*}
Thus we can apply Lemma \ref{cyclic} in $L^2_0(\mu_i)$	with $F_n
=\{	\chi_{[q_{n,j}^l]} \  : \  (j,l)\in J_{n,i}\}$ to get 
$$\M(\mu_i)\le \liminf_{n\in \mathcal{N}', n\to \infty} \sharp J_{n,i}.$$
	By summing it up, we conclude that  
\begin{eqnarray*}	
	\sum_{\mu\in \mathcal M_e(X,\sigma)}\M(\mu)
	\leq &\sum_i
	\liminf_{n\in \mathcal{N}', n\to \infty} \sharp J_{n,i},\\
	\leq &\liminf_{n\in \mathcal{N}', n\to \infty}  \sharp Q'_{n+1},\\
	\leq &2k.	
	\end{eqnarray*}
\end{proof}	


\bibliographystyle{alpha}
\bibliography{universal_bib}

\end{document}